\title{On the Classification of Algebras}
\author{Alex S. E. Levin}
\newtheorem{thm}[equation]{Theorem}
\newtheorem*{thm*}{Theorem}
\newtheorem{prop}[equation]{Proposition}
\newtheorem{cor}[equation]{Corollary}
\newtheorem*{conj*}{Conjecture}
\newtheorem{lem}[equation]{Lemma}
\newtheorem{definition}[equation]{Definition}
\theoremstyle{remark}
\newtheorem{rem}[equation]{Remark}
\newtheorem{ex}[equation]{Example}
\numberwithin{equation}{section}
\begin{document}
\maketitle
\makeacceptance

\begin{abstract}
We classify (possibly noncommutative) algebras of low rank over a domain $R$. We first review results for algebras of rank 2 and for finite-dimensional division algebras over $\mathbb{R}$. These results motivate us to consider which algebras possess a standard involution. Our main result is that algebras of rank 3 are either commutative or possess a standard involution.
\end{abstract}

\begin{acknowledgements}  %% Optional
Thanks to my supportive parents Barry and Mara. Comments and encouragement from my fellow Master's degree students have been invaluable, particularly Eric Roma, Mike Klug, Jared Krogsrud, Sam Schiavone, Brian Lang, Andy Reagan, and Jonathan Godbout. I could not have completed this work without the background and skills I developed in my coursework, especially in the Algebra and Real Analysis sequences taught by Professors John Voight and Jonathan Sands, respectively. Thank you to the members of the committee and to my advisor Professor John Voight, without whose guidance, tireless editing, and infectious energy this thesis would not exist. Thanks above all to my beloved wife, Andrea.
\end{acknowledgements}

\tableofcontents

%\listoffigures
%\listoftables

\mainmatter

\section{Introduction}

In a vector space we enjoy the ability to combine vectors by addition and to scale by elements of a field. But we cannot, in general, multiply vectors. If we ``upgrade" our space to permit multiplication (subject to a few restrictions), we obtain an algebra. 
Generalizing from vector spaces to free modules over a domain, we make the following definition.

\begin{definition}
Let $R$ be a domain (with $1$). A \emph{free} $R$-\emph{algebra} is an associative ring $A$ (with $1$) equipped with an embedding of $R$ into the center of $A$ that is free as an $R$-module. The \emph{rank} of a free $R$-algebra is its rank as an $R$-module. We will assume that all algebras have finite rank. We identify $R$ with its image $R \hookrightarrow R  \cdot 1\subset A$.
\end{definition}

Algebras allow us to multiply tuples. For example, we can multiply pairs of real numbers in the quadratic (rank 2) $\mathbb{R}$-algebra $\mathbb{C}$, and we can multiply quadruples in the quaternion algebras of Sir William Rowan Hamilton.

In this thesis, we contribute to the classification of finite-rank algebras, up to isomorphism.
Algebras are classified first by rank. Much work has been done for rings of low rank \cite{Grossand, Gross, Voightlow}. There are different results for different ranks and choice of $R$.

The results for free quadratic (rank 2) algebras and finite-rank division algebras over $\mathbb{R}$ (see \cite{Palais} and sections 2, 3) suggest that possession of a standard involution is a useful feature with which to compare algebras.  

\begin{definition}\label{def:involution}An \emph{involution} $^-: A\to A$ is an $R$-linear map satisfying $\overline{\overline{x}} = x$ for all $x\in A$, $\overline{1} = 1$, and $\overline{xy} = \overline{y}\,  \overline{x}$ for all $x, y\in A$. An involution is \emph{standard} if it additionally satisfies $x\,  \overline{x}\in R$ for all $x\in A$.\end{definition}
 In section 4 we include some notes on standard involutions and the related topic of exceptional rings.

In section 5 our first result (Theorem \ref{thm:associative}) is a universal multiplication table for a cubic algebra over a domain $R$. Our main result, Theorem \ref{thm:main} in section 5, is that free cubic algebras (algebras of rank 3) are either commutative or possess a standard involution. The unique algebra in the intersection of these cases we call the \emph{nilproduct} ring, where the product of non-scalar basis elements is zero.

In section 6, we generalize the classification theorem of Frobenius from finite-rank division algebras over $\mathbb{R}$ to finite-rank division algebras over a field $F$ of characteristic not equal to 2 which possess a standard involution.
 We conclude with some discussion of future work.
 
\section{Quadratic Algebras}%%%%%%%%%%%%%%%section
In this section, we present some results in the classification of (associative) free quadratic algebras over a commutative ring $R$.
\subsection{Case 2 is a unit of $R$}
First suppose $R$ is a domain with $2\in R^\times$.
We show that the isomorphism classes of quadratic $R$-algebras are in bijection with the set of discriminant classes $d\in R/R^{\times 2}$. The results of this section are standard. For a reference, see Dummit and Foote \cite{dandf} or Voight \cite{Voightlow}.

Let $R$ be a commutative ring (with 1) and with $2\in R^\times$, and let $A$ be a free $R$-algebra $A$ of rank 2: we say that $A$ is \emph{free quadratic R-algebra}.  

\begin{lem}\label{lem:basis1}
There exists $x\in A$ such that $1, x$ is a basis for $A$. 
\end{lem}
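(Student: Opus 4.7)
The plan is to start with an arbitrary $R$-basis $f_1, f_2$ of $A$ and modify it so that one basis element becomes $1$. Writing $1 = a f_1 + b f_2$ with $a, b \in R$, I look for $x = c f_1 + d f_2$ so that $\{1, x\}$ is a basis of $A$; this happens precisely when the change-of-basis matrix $\begin{pmatrix} a & b \\ c & d \end{pmatrix}$ is invertible over $R$, i.e., when $ad - bc \in R^\times$. It is therefore enough to produce $c, d \in R$ with $ad - bc = 1$, and this is possible if and only if $aR + bR = R$.

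The crux, accordingly, will be to show that $a$ and $b$ generate the unit ideal in $R$, and I plan to extract this directly from the fact that $1$ is a two-sided identity in $A$. Writing the multiplication table as $f_1^2 = p_1 f_1 + p_2 f_2$ and $f_2 f_1 = q_1 f_1 + q_2 f_2$ for suitable $p_i, q_i \in R$, the identity $1 \cdot f_1 = f_1$ expands to $(a f_1 + b f_2) f_1 = f_1$; comparing the coefficient of $f_1$ on both sides yields $a p_1 + b q_1 = 1$. Thus $aR + bR = R$, and choosing $c = -q_1$, $d = p_1$ gives $ad - bc = a p_1 + b q_1 = 1$, so $x = -q_1 f_1 + p_1 f_2$ is the desired element.

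The only real observation is that the identity axiom itself supplies the needed coprimality relation; once this is spotted the rest is routine linear algebra over $R$, and in fact uses neither $2 \in R^\times$ nor that $R$ is a domain (these hypotheses will be used later in the classification, not here).
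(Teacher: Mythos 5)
Your proof is correct, and it takes a genuinely different route from the paper's. The paper forms the ideal $I=(a,b)\subset R$, observes that $IA$ is an ideal of $A$ containing $1$ so that $IA=A$, decomposes $IA=Iu\oplus Iv$ and compares with $A=Ru\oplus Rv$ to conclude $I=R$, and only then extracts a B\'ezout relation $at-sb=1$ to build the invertible change of basis; along the way it explicitly invokes the hypothesis that $R$ is a domain. You instead read the unimodularity of $(a,b)$ straight off the identity axiom: expanding $1\cdot f_1=f_1$ through the multiplication table and comparing coefficients of $f_1$ in the free module $A$ gives the explicit relation $ap_1+bq_1=1$, which simultaneously certifies $aR+bR=R$ and hands you the second row $(-q_1,\;p_1)$ of a determinant-one change-of-basis matrix. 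Both arguments ultimately exploit that $1$ is a multiplicative identity (the paper via ``an ideal containing $1$ is everything,'' you via $1\cdot f_1=f_1$), but yours is shorter, fully constructive (it names $x=-q_1f_1+p_1f_2$), avoids the slightly delicate verification that $IA=Iu\oplus Iv$, and, as you note, uses neither $2\in R^\times$ nor the domain hypothesis, so it proves the lemma over an arbitrary commutative base ring.
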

\begin{proof}
Let $u, v$ be a basis for $A$. Then $1 = au+bv$, for some $a, b\in R$. Let $I = (a, b)\subset R$ be the ideal generated by $a, b$. Consider the ideal of $A$
\begin{align*}
IA &= \left\{ \sum_{i=0}^k r_i  (p_iu + q_iv): k\in \mathbb{N} , r_i\in I, p_iu + q_iv\in A\right\}.
  \end{align*}
Observe that $1 = au+bv \in I A$, so $IA = A$. We claim that $IA = Iu\oplus Iv$. An element $cu+dv\in Iu\oplus Iv$ is a sum of elements $cu, dv\in IA$. Thus $Iu\oplus Iv\subset IA$. Next for $w\in A$ consider $cw\in IA$. With respect to the basis $u, v$ we may write $cw = c(ru+sv) = (cr)u+ (cs)v\in Iu + Iv$. So $IA\subset Iu\oplus Iv.$

We have that $Iu\oplus Iv = A = Ru\oplus Rv$. Projecting in the first coordinate, we see that $Iu = Ru$. So for $ru\in Ru$ there exists $su\in Iu$ such that $ru=su$. Then $(r-s)u = 0$. Since $u\not=0$ and $R$ is a domain, we have $r=s\in I$, so $R\subset I$. Hence $R = I$.
Thus there exist $s, t\in R$ such that $at + (-s)b = 1$. So the change of basis
$$
\begin{bmatrix}
a& b\\
s& t
\end{bmatrix}
\begin{bmatrix}
u\\
v
\end{bmatrix}
=
\begin{bmatrix}
1\\ 
su + tv
\end{bmatrix}
$$
mapping $u, v\mapsto 1, su+tv = x$ is invertible since $\det [\begin{smallmatrix} a&b\\ c&d \end{smallmatrix}] = at-sb=1 $, and therefore $1, su + tv$ is a basis for $A$.
\end{proof}
Let $1, x$ be a basis for $A$ as in Lemma \ref{lem:basis1}.
Consider multiplication in $A$: for $ax+b$, $cx+d$ with $a, b, c, d\in R$, we have by distributivity and associativity in $A$, 
\begin{align*}
(a x+b)(c x+d) = a c x^2 + (a d+b c)x + b d.
\end{align*}
(Notice that multiplication is commutative in $A$, since $A$ is generated by $x$ as an $R$-algebra.) 
So to define multiplication in $A$, it is necessary and sufficient to specify $t, n\in R$ such that $x^2  = tx-n$. Consequently, we may write $A\cong \dfrac{R[x]}{(x^2 - tx + n)}$. Thus the set of quadratic algebras equipped with a basis is in bijection with the set of pairs $(t, n) \in R^2$.

Now we consider the isomorphism classes. Given $A = \dfrac{R[x]}{(x^2 - tx + n)}$, we define the \emph{discriminant} of $A$ by disc$(A) = t^2 - 4n\in R/R^{\times 2}.$
\begin{lem}\label{lem:completesquare}
If $2\in R^\times$, then $\displaystyle \frac{R[x]}{(x^2 - tx + n)}\cong \frac{R[y]}{(y^2 - d)} = R[\sqrt{d}]$, where $d = \,$\rm{disc}($A) = dR^{\times 2}$.
\end{lem}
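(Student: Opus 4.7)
The plan is the standard completing-the-square argument, made rigorous by exhibiting an explicit basis-change map. Because $2$ is a unit, I expect to substitute $y = 2x - t$ (rather than $y = x - t/2$) so that the discriminant comes out to be exactly $d = t^2 - 4n$ on the nose, rather than $d/4$.

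First, I would work inside $A = R[x]/(x^2 - tx + n)$ and compute
\[
(2x - t)^2 = 4x^2 - 4tx + t^2 = 4(tx - n) - 4tx + t^2 = t^2 - 4n = d,
\]
using the defining relation $x^2 = tx - n$ in $A$. This identifies an element $y := 2x - t \in A$ satisfying $y^2 = d$.

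Second, I would define a ring homomorphism
\[
\phi: R[y]/(y^2 - d) \longrightarrow A, \qquad y \longmapsto 2x - t.
\]
The universal property of $R[y]$ gives a map $R[y] \to A$; by the preceding computation, $y^2 - d$ lies in the kernel, so $\phi$ descends to the quotient.

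Third, I would verify $\phi$ is an $R$-algebra isomorphism by checking it carries a basis to a basis. Since $\{1, x\}$ is a basis of $A$ and $\{1, y\} = \{1, 2x - t\}$ is obtained from it via the change-of-basis matrix $\bigl[\begin{smallmatrix} 1 & -t \\ 0 & 2 \end{smallmatrix}\bigr]$ of determinant $2 \in R^\times$, the set $\{1, 2x-t\}$ is also an $R$-basis of $A$. Similarly $\{1, y\}$ is an $R$-basis of $R[y]/(y^2 - d)$. Thus $\phi$ is an $R$-linear isomorphism, and being a ring map, it is an algebra isomorphism.

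Finally, I would remark on the well-definedness of the target up to the discriminant class: replacing $d$ by $u^2 d$ for $u \in R^\times$ gives an isomorphic algebra via the rescaling $y \mapsto uy$, consistent with the discriminant being an element of $R/R^{\times 2}$. There is no real obstacle here — the only subtlety worth flagging is the choice of the substitution $y = 2x - t$ (rather than $y = x - t/2$), which is what makes the resulting relation match the integral form of the discriminant exactly.
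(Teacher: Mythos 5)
Your proof is correct and takes essentially the same approach as the paper: the paper uses the inverse substitution $x \mapsto \frac{y}{2} + \frac{t}{2}$, which is exactly the inverse of your $y \mapsto 2x - t$, and likewise verifies that the defining relation is sent to zero. Your explicit check that $\{1, 2x - t\}$ is an $R$-basis (via the determinant-$2$ change-of-basis matrix) is a bit more careful than the paper's one-line assertion that the map is bijective because $2 \in R^\times$, but the underlying argument is identical.
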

\begin{proof}
Consider the ring homomorphism $x\overset{\varphi}{\longmapsto} \dfrac{y}{2} + \dfrac{t}{2}$, which is bijective since $2\in R^\times$. To show that relations are preserved compute
\begin{align*}
\varphi(x^2 - tx + n)  &= \left(\frac{y}{2} +\frac{t}{2} \right)^2 - t \left(\frac{y}{2} +\frac{t}{2} \right) + n\\
	&= \frac{y^2}{4} + \frac{ty}{2} + \frac{t^2}{4} - \frac{ty}{2} - \frac{t^2}{2} + n\\
	&= \frac{(t^2 - 4n)}{4} - \frac{t^2}{4}  + n = 0.
\end{align*}\end{proof} 
By Lemma \ref{lem:completesquare}, up to isomorphism, we may take $t=0$. We have ``completed the square."

\begin{lem}\label{lem:equaldisc}
If $A\cong B$ as free quadratic $R$-algebras, then \rm{disc}$(A) = $ \rm{disc}$(B)$ (as classes in $R/R^{\times 2}$).
\end{lem}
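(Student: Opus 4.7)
The plan is to fix presentations of $A$ and $B$ with respect to bases of the form $1, x$ and $1, y$ (which exist by Lemma \ref{lem:basis1}), write down an arbitrary isomorphism $\varphi\colon A\to B$ in coordinates, and then directly compute $\operatorname{disc}(A)$ in terms of $\operatorname{disc}(B)$ and the change of variables, showing that the two differ by a unit square.

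First I would write $A\cong R[x]/(x^2-tx+n)$ and $B\cong R[y]/(y^2-sy+m)$, so that $\operatorname{disc}(A)=t^2-4n$ and $\operatorname{disc}(B)=s^2-4m$. Since $\varphi$ is an $R$-algebra homomorphism it sends $1$ to $1$, so $\varphi(x)=ay+b$ for some $a,b\in R$. Because $\varphi$ is an isomorphism, $1,\varphi(x)=1,ay+b$ must also be an $R$-basis of $B$; comparing to the basis $1,y$ the change-of-basis matrix is $\bigl(\begin{smallmatrix}1&b\\0&a\end{smallmatrix}\bigr)$, whose determinant $a$ must therefore lie in $R^\times$.

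Next I would expand the relation $\varphi(x)^2-t\varphi(x)+n=0$ in $B$. Using $y^2=sy-m$, the coefficient of $y$ gives $a^2 s+2ab-ta=0$, and since $a\in R^\times$ (and $R$ is a domain) this yields
\begin{equation*}
t=as+2b.
\end{equation*}
The constant term gives $n=a^2 m-b^2+tb$. Substituting these into $t^2-4n$ and simplifying,
\begin{equation*}
t^2-4n=(as+2b)^2-4(a^2 m-b^2+tb)=a^2(s^2-4m),
\end{equation*}
after the cross-terms cancel using $t=as+2b$.

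Since $a\in R^\times$, we have $a^2\in R^{\times 2}$, so $\operatorname{disc}(A)=t^2-4n$ and $\operatorname{disc}(B)=s^2-4m$ represent the same class in $R/R^{\times 2}$, as required. The only non-routine step is justifying $a\in R^\times$, which is the content of the basis-change observation above; everything else is a direct calculation.
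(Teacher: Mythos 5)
Your proof is correct and follows essentially the same route as the paper: write $\varphi(x)=ay+b$, compare coefficients of $1$ and $y$ in $\varphi(x)^2-t\varphi(x)+n=0$ to obtain $t=as+2b$ and $n=a^2m-b^2+tb$, and conclude $t^2-4n=a^2(s^2-4m)$. The only difference is that you explicitly justify $a\in R^\times$ via the determinant of the change-of-basis matrix, a step the paper simply asserts.
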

\begin{proof}
Express $A\cong\dfrac{R[x]}{(x^2 - tx + n)}$ with basis $1, x$ and $B\cong \dfrac{R[y]}{(y^2 - Ty + N)}$ with basis $1, y$, so that disc$(A) = t^2 - 4n$ and disc$(B) = T^2 - 4N$. By hypothesis there exists an isomorphism $\varphi :A\to B$ and $a\in R^\times$, $b\in R$ such that $\varphi(x) = ay+b$. 
Compute 
\begin{align*}
\varphi(x^2 - tx + n) = 0 &= (ay+b)^2  - t(ay+b) + n\\
		&= a^2y^2 + 2aby + b^2 - aty - bt + n\\
		&= a^2(Ty - N) + 2aby + b^2 - aty - bt + n\\
		&= (a^2T + 2ab - at)y - a^2N+ b^2 - bt + n.
\end{align*}
Since $B$ is free, collecting coefficients of $1, y$ and using the fact that $a\in R^\times$, we get $aT = t - 2b$ and $a^2N = b^2 - bt + n$. Therefore 
$a^2(T^2 - 4N) = t^2 - 4n$, which shows that disc$(A) = \,$ disc$(B)$.
\end{proof}

\begin{thm}\label{thm:congquadratics}
Let $2\in R^\times$ and $A = \dfrac{R[x]}{(x^2 - d)}$ and $B = \dfrac{R[y]}{(y^2 - D)}$ be quadratic algebras. Then $A\cong B$ as algebras if and only if $dR^{\times 2} = DR^{\times 2}$.
\end{thm}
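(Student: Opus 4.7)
The plan is to verify both implications using the two preceding lemmas and a direct construction.

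For the forward direction, I would first note that the algebras $A = R[x]/(x^2 - d)$ and $B = R[y]/(y^2 - D)$ fit the standard form with $t = 0$, $n = -d$ and $T = 0$, $N = -D$ respectively. Hence by the definition of the discriminant, $\operatorname{disc}(A) = 0^2 - 4(-d) = 4d$ and $\operatorname{disc}(B) = 4D$. Since $2 \in R^\times$, we have $4 = 2^2 \in R^{\times 2}$, so as classes in $R/R^{\times 2}$ we get $\operatorname{disc}(A) = dR^{\times 2}$ and $\operatorname{disc}(B) = DR^{\times 2}$. Now apply Lemma~\ref{lem:equaldisc}: if $A \cong B$, then $\operatorname{disc}(A) = \operatorname{disc}(B)$ in $R/R^{\times 2}$, which gives $dR^{\times 2} = DR^{\times 2}$.

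For the reverse direction, suppose $dR^{\times 2} = DR^{\times 2}$, so $D = u^2 d$ for some $u \in R^\times$. I would construct an explicit isomorphism $\varphi: A \to B$. The natural candidate is the $R$-algebra map determined by $\varphi(x) = u^{-1} y$. To show $\varphi$ is well defined, I would compute $\varphi(x^2 - d) = (u^{-1} y)^2 - d = u^{-2} y^2 - d = u^{-2} D - d = u^{-2}(u^2 d) - d = 0$ in $B$, so $\varphi$ descends to a map from $R[x]/(x^2 - d)$. The inverse $\psi: B \to A$ defined by $\psi(y) = ux$ is constructed symmetrically and satisfies $\psi(y^2 - D) = (ux)^2 - D = u^2 d - D = 0$; the compositions $\psi \circ \varphi$ and $\varphi \circ \psi$ fix the generators and hence are the identity, so $\varphi$ is an isomorphism.

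I do not anticipate a significant obstacle: the reverse direction is a routine change of variable, and the forward direction reduces immediately to Lemma~\ref{lem:equaldisc} once one records that $4$ is a square in $R^\times$. The only subtlety worth flagging is the role of the hypothesis $2 \in R^\times$, which enters precisely when identifying $4d$ with $d$ modulo $R^{\times 2}$ (and which was already needed to arrive at the normal form $R[x]/(x^2 - d)$ in the first place via Lemma~\ref{lem:completesquare}).
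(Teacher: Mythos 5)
Your proposal is correct and follows essentially the same route as the paper: the forward direction is Lemma~\ref{lem:equaldisc} (with the observation that $4\in R^{\times 2}$, which the paper leaves implicit), and the reverse direction uses the same map $x\mapsto (\text{unit})\cdot y$. The only cosmetic difference is that you exhibit an explicit inverse, whereas the paper computes the kernel and invokes the first isomorphism theorem.
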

\begin{proof}
Suppose $dR^{\times 2} = DR^{\times 2}$. Then $d = a^2D$ for some $a\in R^\times$. Consider the $R$-linear map $\varphi :R[x]\to B$ given by $\varphi(1)= 1$ and $\varphi(x)=ay$.
Let $px+q$ and $rx+s$ be elements of $A$. Then
\begin{align*}
\varphi((px+q)(rx+s)) &= \varphi(prx^2 + (ps+qr)x + qs) = prd + qs + a(ps+qr)y\\
&= pr(a^2D) + qs + a(ps + qr)y = a^2pry^2 + a(qr+ps)y + qs\\
&= (apy+q)(ary+s) = \varphi(px+q)\varphi(rx+s).
\end{align*}
Together with linearity this shows that $\varphi$ is an algebra homomorphism.
We compute the kernel of $\varphi$:
\begin{align*}
\varphi(x^2 - d) &= \varphi(x^2) - d = \varphi(x)^2 - d  = a^2y^2 - a^2D=  
a^2(y^2 - D) = 0,
\end{align*}
which shows $(x^2 - d)\subset \ker(\varphi)$. Next suppose $\varphi(p+qx) = 0$ for $p+qx\in A$. Then $p + aqy = 0$ so $p = -aqy$. Since $y$ is a basis element, $aqy\in R$ if and only if $aq = 0$, so $p, q = 0$. Hence $(x^2-d) = \ker(\varphi)$. Since $a\in R^\times$, $\varphi$ is surjective. Therefore by the first isomorphism theorem, $A\cong B$.

Together with Lemma \ref{lem:equaldisc} this completes the proof.
\end{proof}

\begin{cor}\label{cor:quadraticsbij}
If $2\in R^\times$, then there is a bijection
\begin{align*}
\left\{ \begin{array}{c} \text{quadratic } R\text{-algebras} \\ \text{up to isomorphism} \end{array} \right\} &\longrightarrow R/R^{\times 2}\\
[A]&\longmapsto \rm{disc}(A)\\
\left[\left.\raisebox{.2em}{$R[x]$}\middle/\raisebox{-.2em}{$(x^2-d)$}\right.\right] &\longmapsfrom dR^{\times 2}
\end{align*}
\end{cor}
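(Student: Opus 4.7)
The plan is to verify that the two maps displayed in the corollary are well-defined mutual inverses; essentially all the heavy lifting has already been done, and this is an assembly of Lemmas \ref{lem:basis1}, \ref{lem:completesquare}, \ref{lem:equaldisc}, and Theorem \ref{thm:congquadratics}.

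First I would check that the forward map $[A]\mapsto \mathrm{disc}(A)$ is well-defined. This is exactly the content of Lemma \ref{lem:equaldisc}: isomorphic quadratic $R$-algebras have equal discriminant classes in $R/R^{\times 2}$. Next I would check that the backward map $dR^{\times 2} \mapsto [R[x]/(x^2 - d)]$ is well-defined: if $dR^{\times 2} = d'R^{\times 2}$, then Theorem \ref{thm:congquadratics} gives $R[x]/(x^2 - d) \cong R[x]/(x^2 - d')$, so the class on the right does not depend on the representative.

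It then remains to show these two maps are mutually inverse. For one composition, starting from a class $dR^{\times 2}\in R/R^{\times 2}$, the algebra $R[x]/(x^2 - d)$ has $t = 0$ and $n = -d$, so its discriminant is $0^2 - 4(-d) = 4d$; since $4 = 2^2 \in R^{\times 2}$, we recover $dR^{\times 2}$. For the other composition, starting from an isomorphism class $[A]$, Lemma \ref{lem:basis1} produces a basis $1, x$ so that $A \cong R[x]/(x^2 - tx + n)$, and then Lemma \ref{lem:completesquare} rewrites this (using that $2\in R^\times$) as $A \cong R[y]/(y^2 - d)$ with $d = \mathrm{disc}(A)$; this is exactly the image of $\mathrm{disc}(A)$ under the backward map.

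There is no real obstacle here: the only subtlety is making sure the factor of $4$ in the computation $\mathrm{disc}(R[x]/(x^2-d)) = 4d$ gets absorbed into $R^{\times 2}$, which is where the hypothesis $2 \in R^\times$ is used one final time. Everything else is formal bookkeeping.
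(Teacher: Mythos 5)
Your proof is correct and takes essentially the same route as the paper, whose proof is just a one-line citation of Theorem \ref{thm:congquadratics}; you have simply filled in the details (well-definedness of both maps via Lemma \ref{lem:equaldisc} and Theorem \ref{thm:congquadratics}, surjectivity via Lemmas \ref{lem:basis1} and \ref{lem:completesquare}). Your check that $\mathrm{disc}\bigl(R[x]/(x^2-d)\bigr) = 4d \equiv d$ in $R/R^{\times 2}$ is a small point the paper's terse proof glosses over, and you handle it correctly.
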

\begin{proof}
By Theorem \ref{thm:congquadratics}, the isomorphism classes of quadratic $R$-algebras correspond to discriminants in $R/R^{\times 2}$.
\end{proof}
\begin{ex}\label{ex:disc0} Consider the class of discriminant 0. This class is represented by the algebra with $t = n = 0$, namely $\displaystyle A\cong \frac{R[x]}{(x^2)}$, and $x$ is nilpotent with $x^2=0$.\end{ex}

\begin{ex}\label{ex:disc1} A representative of the class of discriminant 1 is the algebra with $t=0,\, n=1$, so that $x^2-x=0$,  and  $\displaystyle A\cong \frac{R[x]}{(x^2-x)}$. In this case, $x$ is idempotent. Since $x^2-x$ factors as $x$ and $x-1$, by the Chinese Remainder Theorem, we have $A\displaystyle \cong \frac{R[x]}{(x-1)}\times \frac{R[x]}{(x)}\cong R\times R$. 
 \begin{center}
\begin{tikzpicture}
\node at (-2.5, 3.6) {$\dfrac{R[x]}{(x^2 - x)}$};
\node at (-2.5, 2.8) {$ ax+b \mod x^2 - x $};
\node at (-2.5, 2.2) {$(r-s)x + s$};
\node at (3, 3.6) {$\dfrac{R[x]}{(x-1)}\times \dfrac{R[x]}{(x)}$};
\node at (4.5, 2.8) {$(ax+b \mod x-1, \ ax+b\mod x)$};
\node at (0, 0) {$\begin{array}{c}R\times R\\ (r, s) \mapsfrom (a+b, b) \end{array}$};
\node at (.2, 3.8) {$\sim$};
\node at (.2, 3.4) {\footnotesize{C.R.T.}};
\draw [<-] (.2, .8) -- (1.8,2.4); 
\draw [->] (-1, 3.6) -- (1.4, 3.6);
\draw [->] (-.2, .8) -- (-1.8, 1.6);
\end{tikzpicture}
\end{center}
The induced multiplication is componentwise.  
 Multiplication in this class is given by $(a x+b)(c x+d) =  (ac + a d+b c)x + b d$. This class of algebras also has representative $\dfrac{R[x]}{(x^2 - 1)}$.
 \end{ex}

\begin{ex}\label{ex:RisZ}
If we remove the requirement that $2\in R^\times$, we sometimes achieve the same classification. The following was known to Gauss \cite{Voightlow}. For example, take $R = \mathbb{Z}$. Let $A = \dfrac{\mathbb{Z}[x]}{(x^2 - tx + n)}$, which has discriminant $t^2 - 4n$. Suppose $B  = \dfrac{\mathbb{Z}[y]}{(y^2 - Ty + N)}$ is isomorphic to $A$ via $\varphi$ given by $1\mapsto 1$, $x\mapsto y+k$ for some $k\in \mathbb{Z}$. Then
\begin{align*}
\varphi(x^2 - tx + n) = 0 &= (y+k)^2 - t(y+k) + n\\
			&= y^2 + 2ky + k^2 - ty - tk + n\\
			&= (T-(t-2k))y - N + k^2  - tk+n.
\end{align*}
Thus $T = t-2k$ and $N = k^2 - tk + n$. Computing the discriminant of $B$ we get $T^2 - 4N = t^2 - 4n$. Letting $x\mapsto -y+k$ leads to identical results. Since $\mathbb{Z}^\times = \{ \pm 1 \}$, this covers all possible invertible maps.

Suppose conversely that $A$ and $B$ have equal discriminants $t^2 - 4n = T^2 - 4N$ (since $(\mathbb{Z}^\times)^2 = \{1\}$, these are single-element classes). Then $(t-T)(t+T) = 4(n-N)$, 
from which we see that $t - T$ and $t+T$ are even and that $
N = \dfrac{T^2 - t^2}{4}+n.
$ Then consider the isomorphism $\varphi:A\to B$ given by $1\mapsto 1$, $x\mapsto y+\dfrac{t-T}{2}$. We check that relations are satisfied:
\begin{align*}
\varphi(x^2 - tx + n) = 0 &= \varphi(x)^2 - t\varphi(x) + n\\
			&= \left( y + \frac{t-T}{2} \right)^2 - t\left( y + \frac{t-T}{2} \right)+n\\
			&= y^2 + y(t-T) + \frac{1}{4}(t-T)^2 - ty - \frac{t}{2}(t-T) + n\\
			&= y^2 + [(t-T) - t]y + \frac{1}{4}(t^2 - 2tT+T^2) - \frac{1}{2}t^2+ \frac{1}{2}tT+n\\
			&= y^2 - Ty  + \frac{t^2}{4} - \frac{tT}{2}+ \frac{T^2}{4} - \frac{t^2}{2}+\frac{tT}{2}+n\\
			&= y^2 - Ty + \frac{T^2 - t^2}{4} + n\\
			&= y^2 - Ty + N,
\end{align*}
which shows that $\varphi$ induces a surjective homomorphism $A\to  B$. To check that $\varphi$ is injective let $ax+b, cx+d\in A$ satisfy $\varphi(ax+b) = \varphi(cx+d)$. Then
\begin{align*}
0&= \varphi(ax+b) = \varphi(cx+d) = (a-c)\varphi(x) + b-d\\
&= (a-c)y + (a-c)\frac{t-T}{2} + b-d,
\end{align*}
which shows that $ax+b = cx+d$. Thus $\varphi$ is an isomorphism $A\to B$.
\end{ex}

%%%%%%%%%%%%%%%%%%%subsection
\subsection{Case $R$ is a field of characteristic 2}

If we relax the condition that $2\in R^\times$, then the classification becomes much more complicated. For example, when $F$ is a field of characteristic 2, then there exist nonisomorphic quadratic $F$-algebras with equal discriminants. 
Consider $A = \dfrac{\mathbb{F}_2[x]}{(x^2+x+1)}$ and $B = \dfrac{\mathbb{F}_2[y]}{(y^2 + y)}$. Note that disc$(A) = $ disc$(B) = 1$. Since $x^2 + x + 1$ is irreducible, $A$ is a field. Algebra $B$ has zero divisors $y$ and $y+1$, so $B$ is not a field. Thus $A\not\cong B$. 

Therefore we cannot in general classify quadratic algebras by discriminant when $2\notin R^\times$. However, when $2\notin R^\times$ we can classify separable quadratic algebras $A$ using the Artin-Schrier group in $R$.
\begin{definition}\label{def:separable}Let $f\in F[x]$ be a polynomial of degree $n$. Let $K = F(\alpha_1, \alpha_2, \ldots, \alpha_n)$, where $\alpha_i$ are the roots of $f$. We say $f$ is \emph{separable} if $\alpha_i$ are distinct in $K$. Define the $F$-algebra $A = \dfrac{F[x]}{(f(x))}$ to be \emph{separable} when $f$ is separable. \end{definition}
\begin{lem}\label{lem:char2separable}
For a field $F$ of characteristic $2$, polynomial $f(x) = x^2 - tx + n\in F[x]$ is separable if and only if $t\not=0$, and $\dfrac{F[x]}{(f(x))}\cong \dfrac{F[y]}{(y^2 + y + n)}$ for some $n\in F$.
\end{lem}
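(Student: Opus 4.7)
The plan is to handle the two assertions separately: first the separability criterion, then the normal form.

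For the separability criterion, I will examine the formal derivative $f'(x)$. In characteristic 2 we have $f'(x) = 2x - t = -t = t$, so $f'$ is the nonzero constant $t$ when $t \neq 0$, and $f' = 0$ when $t = 0$. If $t \neq 0$, then $\gcd(f, f') = \gcd(f, t) = 1$ (since $t$ is a unit in $F$), so $f$ has no repeated roots in its splitting field and is therefore separable. Conversely, if $t = 0$, then $f(x) = x^2 + n$, and in characteristic 2 the Frobenius identity gives $x^2 + n = (x + \sqrt{n})^2$ in $K = F(\sqrt{n})$, so the unique root $\sqrt{n}$ is repeated and $f$ is not separable.

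For the normal form, assuming $t \neq 0$, I would perform the substitution $x \mapsto ty$, which is invertible since $t \in F^\times$. Then
\begin{align*}
x^2 - tx + n &\longmapsto t^2 y^2 - t^2 y + n = t^2(y^2 + y) + n,
\end{align*}
using $-t^2 = t^2$ in characteristic 2. Dividing the generator of the ideal by the unit $t^2$ and setting $n' = n/t^2$ gives the relation $y^2 + y + n' = 0$. Thus the $F$-linear map determined by $1 \mapsto 1$ and $x \mapsto ty$ descends to an isomorphism $F[x]/(x^2 - tx + n) \cong F[y]/(y^2 + y + n')$; bijectivity is immediate because the substitution is invertible on $F[x]$ and carries the defining ideal onto the defining ideal.

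The argument is almost entirely formal once one observes the characteristic-2 identities $-t = t$ and $2x = 0$; the only step that requires any thought is recognizing that $t$ being a unit both produces a nonzero derivative (giving separability) and permits the rescaling $x \mapsto ty$ (giving the normal form), so the two halves of the lemma hinge on the same hypothesis $t \neq 0$ for the same reason. I do not anticipate a real obstacle here.
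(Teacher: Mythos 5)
Your proof is correct. The normal-form half is essentially the paper's argument: the paper divides the relation $x^2 - tx + n = 0$ by $t^2$ and sets $y = t^{-1}x$, which is the same invertible substitution as your $x \mapsto ty$ followed by rescaling the generator by the unit $t^2$. The separability half, however, takes a different route. The paper works directly with the factorization $f = (x-\alpha_1)(x-\alpha_2)$ over the splitting field, reads off $t = \alpha_1 + \alpha_2$, and uses the characteristic-2 fact that $\alpha_1 + \alpha_2 = 0$ if and only if $\alpha_1 = \alpha_2$. You instead invoke the formal-derivative criterion: $f' = 2x - t = t$ is a nonzero constant exactly when $t \neq 0$, so $\gcd(f,f') = 1$ and $f$ is separable, while $t = 0$ gives $f = x^2 + n = (x + \sqrt{n})^2$ with a doubled root. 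Both arguments are elementary and correct; the paper's is more self-contained (it uses nothing beyond comparing coefficients), while yours leans on the standard equivalence between separability and coprimality with the derivative, which is the criterion that scales to higher degree and makes the role of the hypothesis $t \neq 0$ transparent. One small improvement you make over the statement as printed: you introduce $n' = n/t^2$ for the new constant term rather than reusing the letter $n$, which matches what the paper's own computation actually produces ($t^{-2}n$).
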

\begin{proof}
Let $(x-\alpha_1)(x-\alpha_2)$ be the factorization of $f$ over $F(\alpha_1, \alpha_2)$. By multiplying $(x-\alpha_1)(x-\alpha_2) = x^2 - (\alpha_1+\alpha_2)x + \alpha_1\alpha_2$, we see that $t = \alpha_1+\alpha_2$. Thus $t\not=0$, if and only if $\alpha_1, \alpha_2$ are distinct.

 Thus a separable quadratic polynomial over $F$ is of the form $f(x) = x^2 - tx + n$ where $t\in F^\times$. For separable $A = \dfrac{F[x]}{(x^2 - tx + n)}$, we have $x^2 - tx + n = 0$, so $t^{-2}x^2 - t^{-1}x + t^{-2}n = 0$. Then $A\cong \dfrac{F[y]}{(y^2 - y + t^{-2}n)}$ with isomorphism $x\mapsto t^{-1}x = y$. Therefore we may take any separable quadratic $F$-algebra to be of the form $\dfrac{F[x]}{(x^2 + x + n)}$.
  \end{proof}

Let $\wp(F) = \{ r\in F: r = r^2 \}$ be the \emph{Artin-Schreier group} of $F$, with operation addition. To confirm that this is a group, check that for $r, s\in \wp(F)$, we have $r+s\in \wp(F)$: 
$$
(r+s)^2 = r^2 + 2rs + s^2 = r^2 + s^2 = r+s.
$$
Define the class of $a$ in $F$ to be $a + \wp(F) = \{ x\in F: a-x\in \wp(F) \}$.
\begin{prop}\label{prop:artinbij}
For any field $F$ with char$(F) = 2$, there exists a bijection of sets
\begin{align*}
\frac{F}{\wp(F)} &\longrightarrow
\left\{
\begin{array}{c}
\text{separable quadratic algebras}\\
\text{over }F \text{ up to isomorphism}
\end{array}
\right\}  \\
a + \wp(F) &\longmapsto \frac{F[x]}{(x^2 + x + a)} 
\end{align*}
\end{prop}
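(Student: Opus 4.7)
The plan is to show the map $a + \wp(F) \mapsto F[x]/(x^2+x+a)$ is well-defined, surjective, and injective on isomorphism classes.

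\emph{Surjectivity.} Lemma \ref{lem:char2separable} already shows that every separable quadratic $F$-algebra is isomorphic to one of the form $F[x]/(x^2+x+a)$ for some $a \in F$, so surjectivity is immediate. (I read $\wp(F)$ here as its standard Artin--Schreier meaning $\{c^2+c : c \in F\}$, consistent with the verification $(r+s)^2 = r+s$ being used as a closure argument.)

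\emph{Well-definedness.} Suppose $a - b \in \wp(F)$, so $a - b = c^2 + c$ for some $c \in F$. I would define the $F$-algebra map $\varphi: F[x] \to F[y]/(y^2+y+b)$ by $x \mapsto y + c$ and verify that the defining relation is respected. In characteristic $2$ we have
\begin{align*}
\varphi(x^2+x+a) &= (y+c)^2 + (y+c) + a \\
&= y^2 + c^2 + y + c + a \\
&= (y^2+y+b) + (c^2+c+a-b) = 0,
\end{align*}
so $\varphi$ descends to a homomorphism $F[x]/(x^2+x+a) \to F[y]/(y^2+y+b)$. Since $1, y$ is a basis for the target and $\varphi$ sends the basis $1, x$ to $1, y+c$, which is again an $F$-basis, $\varphi$ is an isomorphism.

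\emph{Injectivity.} The heart of the argument is here. Suppose $\psi: F[x]/(x^2+x+a) \to F[y]/(y^2+y+b)$ is an $F$-algebra isomorphism. Then $\psi(1) = 1$ and, because $1,y$ is an $F$-basis of the target, I can write $\psi(x) = \alpha y + \beta$ for some $\alpha, \beta \in F$ with $\alpha \neq 0$ (otherwise $\psi$ would not be surjective onto the coefficient of $y$). Applying $\psi$ to the relation $x^2+x+a=0$ and using $y^2 = y+b$ in the target (from $y^2+y+b=0$ in characteristic $2$), I compute
\begin{align*}
0 &= (\alpha y + \beta)^2 + (\alpha y + \beta) + a \\
&= \alpha^2 y^2 + \beta^2 + \alpha y + \beta + a \\
&= \alpha^2(y+b) + \alpha y + (\beta^2 + \beta + a) \\
&= (\alpha^2 + \alpha) y + (\alpha^2 b + \beta^2 + \beta + a).
\end{align*}
Comparing coefficients in the free module with basis $1, y$ yields $\alpha^2 + \alpha = 0$ and $\alpha^2 b + \beta^2 + \beta + a = 0$. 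From $\alpha(\alpha+1)=0$ with $\alpha \in F^\times$, I conclude $\alpha = 1$, and then $a - b = \beta^2 + \beta \in \wp(F)$, giving $a + \wp(F) = b + \wp(F)$.

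The only real subtlety is step three: one must confirm that $\alpha \in F^\times$ rather than just $\alpha \in F$, which follows because $\psi$ must be surjective and $\psi(x) = \beta$ alone could not generate any element with nonzero $y$-component. Once that is handled, the characteristic-$2$ identity $(\alpha y + \beta)^2 = \alpha^2 y^2 + \beta^2$ makes the coefficient comparison clean and forces both the scaling factor and the Artin--Schreier class.
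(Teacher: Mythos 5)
Your proof is correct and follows essentially the same route as the paper: surjectivity from Lemma \ref{lem:char2separable}, well-definedness via the substitution $x\mapsto y+c$, and injectivity by writing $\psi(x)=\alpha y+\beta$ and comparing coefficients to force $\alpha=1$ and $a-b=\beta^2+\beta$ (the paper phrases this last step contrapositively, but the computation is identical). Your reading of $\wp(F)$ as $\{c^2+c : c\in F\}$ is indeed the intended one, matching what the paper actually uses despite the slip in its displayed definition.
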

\begin{proof}
For the class $a+\wp(F)\in F/\wp(F)$, consider the map $a+\wp(F)\mapsto \dfrac{F[x]}{(x^2+x+a)}$. First we show that the map is well-defined. Suppose $a+\wp(F) = b+\wp(F)$. Then $b = a+r+r^2$ for some $r\in F$. Since $\dfrac{F[x]}{(x^2 + x + a)}\cong \dfrac{F[x]}{(x^2 + x + a + r+ r^2)}$ by the isomorphism $x\mapsto x+r$, the map is well-defined. By Lemma \ref{lem:char2separable}, the map is surjective. Next we show that the map is injective. Suppose $a+\wp(F)\not=b+\wp(F)$, and that $a+\wp(F)\mapsto A =  \dfrac{F[x]}{(x^2 +x + a)}$, and $b+\wp(F)\mapsto B =  \dfrac{F[y]}{(y^2+y+b)}$ with $A\cong B$. Then there exists an isomorphism $\varphi:A\to B$ with $1\mapsto 1$ and $x\mapsto p+qy$ for some $p, q\in F$ with $q\in F^\times$. Then
\begin{align*}
\varphi(x^2 + x + a) = 0 &= \varphi(y)^2 + \varphi(y) + a\\
				&= (p+qy)^2 + p+qy + a\\
				&= p^2 + 2pqy + q^2y^2 + p + qy + a\\
				&= q^2(y+b) + qy + a + p + p^2\\
				&= (q+q^2)y + a + q^2b + p + p^2
\end{align*} Collecting like terms we see $q(q+1) = 0$. Since $q\not=0$, $q= 1$. Then $a + b + p + p^2 = 0$, which contradicts $a+\wp(F)\not= b+\wp(F)$. Therefore the map is injective, and so the sets are in bijection.
\end{proof}
%%%%%%%%%%%%%%%%%%%%%%%%%%
\subsection{Remarks on Quadratic Algebras}

For $A = \dfrac{R[x]}{(x^2 - tx + n)}$, consider the $R$-linear map $^-:A\to A$ induced by 
$\overline{x} = t-x$. Observe $x\, \overline{x} = n\in R$, that $\overline{(ax+b)(cx+d)} = \overline{cx+d}\, \overline{ax+b}$, and  $\overline{\overline{x}} = x$ (also that $x+\overline{x}\in R$). Thus this map is a standard involution (Definition \ref{def:involution}). Therefore all quadratic algebras possess a standard involution. It is known that when $A$ is quadratic this involution is unique \cite{Voightlow}. 
%%%%%%%%%%%%%%%%%%%%%%%%%%%%%%%section
\section{Frobenius's Theorem}

A \emph{division ring} is a ring in which each nonzero element is a unit. In 1877, Ferdinand Frobenius \cite{Palais} categorized the finite-dimensional division algebras over the real numbers as consisting of $\mathbb{R}, \mathbb{C}$, and the Hamiltonians $\mathbb{H}$. In this section, we give a modern elementary formulation of the proof. In later sections, we generalize from the real field to a domain and from a division algebra to one possessing a standard involution (so this argument serves as a partial model for our later investigations).

\begin{definition}\label{def:hamiltonians}
The \emph{Hamiltonians} $\mathbb{H}$ are the $\mathbb{R}$-algebra with basis $1, i, j, ij$ where multiplication is defined $i^2 = j^2 = -1$ and $ji = -ij$.\end{definition} 
\begin{rem}\label{rem:hamiltonianinvolution}
For $x = a + bi + cj + dij \in \mathbb{H}$, the map 
\begin{align*}
^-: \mathbb{H}&\longrightarrow\mathbb{H}\\
x&\longmapsto \overline{x} =  a - bi - cj - dij
\end{align*}
is a standard involution (Definition \ref{def:involution}). Define the \emph{trace} of $x$ to be tr$(x) = x + \overline{x} = 2a$. We will show (Corollary \ref{cor:divisioninvolution}) that $x\overline{x} = a^2 + b^2 + c^2 + d^2\in \mathbb{R}$. Define the \emph{norm} of $x$ to be nr$(x) = x\overline{x}$. Observe that $x$ satisfies $0 = T^2 - \rm{tr}$$(x)T + \rm{nr}$$(x)\in \mathbb{R}[T]$. Thus every $x\in \mathbb{H}$ satisfies a quadratic polynomial in $\mathbb{R}[T]$. \end{rem}

\begin{lem}\label{lem:irreducibledegree}
A real polynomial $p(x)\in \mathbb{R}[x]$ that is irreducible over $\mathbb{R}$ has degree $1$ or $2$.
\end{lem}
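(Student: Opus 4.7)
The plan is to invoke the Fundamental Theorem of Algebra and then exploit complex conjugation to pair up non-real roots. So first I would cite (or take as a black box) the fact that every non-constant polynomial in $\mathbb{C}[x]$ has a root in $\mathbb{C}$; a proof of this is classical (via Liouville's theorem, or winding number/degree theory, or the fact that $|p(z)|$ attains its infimum and must attain $0$ there), but since it is standard I would not reprove it in the thesis.

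Given that, let $p(x) \in \mathbb{R}[x]$ be irreducible, with $\deg p \geq 1$. Viewing $p$ as an element of $\mathbb{C}[x]$, the Fundamental Theorem of Algebra gives a root $\alpha \in \mathbb{C}$. The proof splits into two cases. If $\alpha \in \mathbb{R}$, then $(x - \alpha)$ is a real factor of $p$, so by irreducibility $p$ is (a scalar multiple of) $x - \alpha$, and $\deg p = 1$. Otherwise, $\alpha \notin \mathbb{R}$, so $\overline{\alpha} \neq \alpha$. The key observation is that complex conjugation is a ring automorphism of $\mathbb{C}$ fixing $\mathbb{R}$, so applying it to the equation $p(\alpha) = 0$ yields $p(\overline{\alpha}) = 0$. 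Hence $\overline{\alpha}$ is a second, distinct root, and in $\mathbb{C}[x]$ the product $(x - \alpha)(x - \overline{\alpha}) = x^2 - (\alpha + \overline{\alpha})x + \alpha \overline{\alpha}$ divides $p(x)$.

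Now I would observe that $\alpha + \overline{\alpha} = 2\,\mathrm{Re}(\alpha)$ and $\alpha \overline{\alpha} = |\alpha|^2$ both lie in $\mathbb{R}$, so this quadratic $q(x)$ actually lies in $\mathbb{R}[x]$. The final step is to promote the divisibility $q \mid p$ from $\mathbb{C}[x]$ to $\mathbb{R}[x]$: since $q$ is monic with real coefficients, performing polynomial long division of $p$ by $q$ in $\mathbb{R}[x]$ produces a quotient and remainder with real coefficients, and by uniqueness of division in $\mathbb{C}[x]$ this remainder must equal the remainder computed over $\mathbb{C}$, which is zero. Hence $q \mid p$ in $\mathbb{R}[x]$, and irreducibility of $p$ forces $p$ to be a scalar multiple of $q$, giving $\deg p = 2$.

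The main obstacle is really just the Fundamental Theorem of Algebra itself; the rest of the argument is a short routine deduction. I would therefore phrase the proof in the thesis as a clean case split after stating that we take the Fundamental Theorem of Algebra as known, so that the reader can see this lemma is exactly the statement that $\mathbb{R}$ has no proper finite extensions other than $\mathbb{C}$.
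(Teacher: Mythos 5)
Your proof is correct and follows essentially the same route as the thesis: both invoke the Fundamental Theorem of Algebra and use the fact that complex conjugation fixes $\mathbb{R}$ to pair a non-real root with its conjugate, yielding a real quadratic factor. The only difference is bookkeeping --- the thesis factors $p$ completely into conjugate-paired quadratics, whereas you extract a single real factor (linear or quadratic) and let irreducibility finish; your version is, if anything, slightly more careful about why the divisibility descends from $\mathbb{C}[x]$ to $\mathbb{R}[x]$.
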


\begin{proof} By the Fundamental Theorem of Algebra, the polynomial $p(x)$ factors over $\mathbb{C}$ as $p(x) =\displaystyle \prod_{k=1}^n(x-\alpha_k) $, for some $n\in \mathbb{N}$, where $\alpha_k$ are in $\mathbb{C}$. Since $p(x)$ is irreducible over $\mathbb{R}$, either $p(x)$ is linear or all $\alpha_k\notin\mathbb{R}$ and thus occur in conjugate pairs $(p(\alpha) = p(\overline{\alpha}) = 0)$. So $p(x)$ may be written 
$$p(x) = \prod_{m}(x-\alpha_m)(x-\overline{\alpha_m})  = \prod_{m}\left(
x^2 - (\alpha_m + \overline{\alpha_m})x + \alpha_m\overline{\alpha_m}\right),$$
for some $m\in \mathbb{N}$.
Since $\alpha + \overline{\alpha_m}$ and $\alpha_m\overline{\alpha_m}$ are real, if the degree of $p(x)$ is greater than 2, we have a contradiction. Thus the degree of $p(x)$ is less than or equal to 2.
\end{proof}

\begin{lem}\label{lem:divisionminimal}
If $D$ is a finite-dimensional division ring containing $\mathbb{R}$ and $\alpha\in D\setminus \mathbb{R}$, then the minimal polynomial of $\alpha$ is irreducible over $\mathbb{R}$. 
\end{lem}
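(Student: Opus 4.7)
The plan is the standard ``no zero divisors kills reducibility'' argument. First I would verify that $\alpha$ even has a minimal polynomial: since $D$ is finite-dimensional over $\mathbb{R}$, the infinite family $1, \alpha, \alpha^2, \ldots$ is linearly dependent, so $\alpha$ satisfies some nonzero polynomial over $\mathbb{R}$, and among all such polynomials one chooses the monic one of least positive degree to be the minimal polynomial $m(x) \in \mathbb{R}[x]$.

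Next I would observe that because $\alpha \notin \mathbb{R}$, the degree of $m(x)$ is at least $2$: a degree-$1$ minimal polynomial $x - r$ would force $\alpha = r \in \mathbb{R}$. (This step is what uses the hypothesis $\alpha \in D \setminus \mathbb{R}$; it ensures the ``irreducibility'' assertion is nontrivial.)

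Now I would argue by contradiction. Suppose $m(x) = f(x) g(x)$ in $\mathbb{R}[x]$ with $1 \le \deg f, \deg g < \deg m$. Evaluating at $\alpha$ gives $0 = m(\alpha) = f(\alpha) g(\alpha)$ in $D$. Since $D$ is a division ring it has no zero divisors, so $f(\alpha) = 0$ or $g(\alpha) = 0$. Either way, $\alpha$ satisfies a nonzero polynomial over $\mathbb{R}$ of degree strictly less than $\deg m$, contradicting the minimality of $m(x)$. Hence $m(x)$ is irreducible over $\mathbb{R}$.

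There is no real obstacle here — the only subtlety is remembering to invoke the two defining properties of $D$ at the right moments: finite-dimensionality to produce $m(x)$ in the first place, and the absence of zero divisors (which is implied by being a division ring) to conclude that a factorization of $m$ forces a smaller vanishing polynomial for $\alpha$. Combined with Lemma \ref{lem:irreducibledegree}, this lemma will then give the crucial corollary that every element of $D \setminus \mathbb{R}$ has minimal polynomial of degree exactly $2$, which is the launching point for the rest of Frobenius's classification.
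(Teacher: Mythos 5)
Your proof is correct and follows essentially the same route as the paper: factor the minimal polynomial, evaluate at $\alpha$, and use the absence of zero divisors in $D$ to contradict minimality. The extra remarks on the existence of the minimal polynomial (from finite-dimensionality) and on its degree being at least $2$ are fine but not needed for the irreducibility claim itself.
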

\begin{proof}
Let $p(x)\in \mathbb{R}[x]$ be the minimal polynomial of $\alpha$ over $\mathbb{R}$. Suppose $p(x) = f(x)g(x)$ with $f, g\in \mathbb{R}[x]$,  both of nonzero degree (less than $n$); then $p(\alpha) = f(\alpha)g(\alpha) = 0$. Then since $D$ is a division ring, $f(\alpha)=0$ or $g(\alpha)=0$ and so $\alpha$ satisfies a real polynomial of degree less than $n$, a contradiction. Therefore, $p(x)$ is irreducible.
\end{proof}

\begin{cor}\label{cor:fieldRorC}
A finite extension $K$ over $\mathbb{R}$ is either $\mathbb{R}$ or $\mathbb{C}$. 
\end{cor}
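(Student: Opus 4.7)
The plan is to show that if $K \neq \mathbb{R}$, then $K$ contains a square root of $-1$ that generates all of $K$. First I would pick any $\alpha \in K \setminus \mathbb{R}$ (if no such element exists, $K = \mathbb{R}$ and we are done). Since $K$ is a field, it is in particular a division ring containing $\mathbb{R}$, so by Lemma \ref{lem:divisionminimal} the minimal polynomial $p(x) \in \mathbb{R}[x]$ of $\alpha$ is irreducible. By Lemma \ref{lem:irreducibledegree}, $\deg p = 1$ or $2$, and since $\alpha \notin \mathbb{R}$ we must have $\deg p = 2$. Writing $p(x) = x^2 + bx + c$ with $b^2 - 4c < 0$ (forced by irreducibility over $\mathbb{R}$), I would complete the square to produce $i := (2\alpha + b)/\sqrt{4c - b^2} \in K$ satisfying $i^2 = -1$, so that $\mathbb{R}(\alpha) = \mathbb{R}(i) \cong \mathbb{C}$.

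The remaining task is to show $K = \mathbb{R}(i)$. I would argue by contradiction: suppose there exists $\beta \in K \setminus \mathbb{R}(i)$. Applying the same completion-of-square argument to $\beta$ yields an element $j \in K$ with $j^2 = -1$ and $\mathbb{R}(\beta) = \mathbb{R}(j)$. Note $j \notin \mathbb{R}(i)$, for otherwise $\beta \in \mathbb{R}(j) \subset \mathbb{R}(i)$, contradicting the choice of $\beta$. Now I would exploit that $K$ is commutative (being a field) by computing
\begin{equation*}
(i-j)(i+j) = i^2 + ij - ji - j^2 = i^2 - j^2 = -1 - (-1) = 0.
\end{equation*}
Since $K$ is a field, it has no zero divisors, so $j = i$ or $j = -i$; either conclusion puts $j \in \mathbb{R}(i)$, contradicting the preceding paragraph. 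Hence no such $\beta$ exists, and $K = \mathbb{R}(i) \cong \mathbb{C}$.

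The main obstacle is the uniqueness step: one must rule out the presence of two independent square roots of $-1$ inside $K$. The key leverage for this is the combination of commutativity and the absence of zero divisors, which together force the simple factorization $i^2 - j^2 = (i-j)(i+j)$ to pin $j$ down to $\pm i$. This step fails over a noncommutative division ring (as in $\mathbb{H}$, where $i, j, k$ are genuinely distinct square roots of $-1$), which is exactly why Frobenius's theorem has a third case $\mathbb{H}$ beyond $\mathbb{R}$ and $\mathbb{C}$ and why this corollary produces only the commutative possibilities.
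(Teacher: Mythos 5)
Your proof is correct, and it handles the one genuinely delicate point more explicitly than the paper does. Both arguments begin the same way: pick $\alpha\in K\setminus\mathbb{R}$ and use Lemmas \ref{lem:divisionminimal} and \ref{lem:irreducibledegree} to conclude that $\alpha$ satisfies an irreducible quadratic, so $\mathbb{R}(\alpha)\cong\mathbb{C}$. The divergence is in how $K=\mathbb{R}(\alpha)$ is established. The paper asserts that since each minimal polynomial splits over $\mathbb{C}$, every element of $K$ lies in $\mathbb{C}$, hence $K\subset\mathbb{C}$; this implicitly appeals to an embedding of all of $K$ into a single copy of $\mathbb{C}$ (i.e., into the algebraic closure of $\mathbb{R}$), which is where the different quadratic subfields get identified. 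You instead prove this identification internally: any second element $\beta\notin\mathbb{R}(i)$ would produce a second square root $j$ of $-1$, and the factorization $(i-j)(i+j)=i^2-j^2=0$ --- valid because $K$ is commutative and has no zero divisors --- forces $j=\pm i$. Your route is more elementary and self-contained, and it has the pedagogical bonus of isolating exactly where commutativity is used, which explains why the argument breaks for $\mathbb{H}$ and why Frobenius's theorem acquires its third case. The paper's route is shorter but leans on the universal property of the algebraic closure without saying so.
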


\begin{proof}
Assume $K\not=\mathbb{R}$ and let $\alpha\in K\setminus\mathbb{R}$. Let $p(x)$ be the minimal polynomial of $\alpha$ over $\mathbb{R}$. By Lemma \ref{lem:divisionminimal} we have that $p(x)$ is irreducible, and since $p(x)$ is of degree at least 2, by Lemma \ref{lem:irreducibledegree}, $p(x)$ is quadratic. Since $p(x)$ factors completely over $\mathbb{C}$, we have $\alpha\in \mathbb{C}$. Hence $K\subset \mathbb{C}$. Then since $[K:\mathbb{C}]\ge 2$, we have $K = \mathbb{C}$.
 \end{proof}

\begin{thm}[Frobenius]\label{thm:frobenius}  The finite-dimensional division $\mathbb{R}$-algebras are $\mathbb{R}$, $\mathbb{C}$, and $\mathbb{H}$.
\end{thm}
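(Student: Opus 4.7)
The plan is to split on commutativity and handle the noncommutative case by embedding $\mathbb{H}$ into $D$. If $D$ is commutative, then $D$ is a finite field extension of $\mathbb{R}$, so Corollary \ref{cor:fieldRorC} gives $D \cong \mathbb{R}$ or $D \cong \mathbb{C}$. Henceforth assume $D$ is noncommutative.

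The first technical step is to produce a ``pure imaginary'' subspace. By Lemmas \ref{lem:divisionminimal} and \ref{lem:irreducibledegree}, every $\alpha \in D \setminus \mathbb{R}$ satisfies an irreducible real quadratic $\alpha^2 + b\alpha + c = 0$. Completing the square, $(\alpha + b/2)^2 = b^2/4 - c$ is a strictly negative real number (it cannot be $\ge 0$, lest $\alpha + b/2 \in \mathbb{R}$ and hence $\alpha \in \mathbb{R}$). Set $V := \{\alpha \in D : \alpha^2 \in \mathbb{R}_{\le 0}\}$; the completing-the-square computation shows $D = \mathbb{R} + V$. To obtain a genuine direct-sum decomposition $D = \mathbb{R} \oplus V$ of $\mathbb{R}$-vector spaces, I would verify (this is the delicate point) that $V$ is closed under addition, which amounts to showing $\alpha\beta + \beta\alpha \in \mathbb{R}$ whenever $\alpha,\beta \in V$. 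This in turn follows from the polarization identity $(\alpha+\beta)^2 - \alpha^2 - \beta^2 = \alpha\beta + \beta\alpha$ together with the quadratic relation satisfied by $\alpha+\beta$.

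Next I would install the symmetric $\mathbb{R}$-bilinear form $B(\alpha, \beta) := -\tfrac{1}{2}(\alpha\beta + \beta\alpha)$ on $V$; it is positive-definite since $B(\alpha,\alpha) = -\alpha^2 > 0$ for $0 \ne \alpha \in V$, and two elements of $V$ anticommute precisely when they are $B$-orthogonal. Since $D$ is noncommutative, $\dim_\mathbb{R} V \ge 2$; pick $B$-orthogonal $i, j \in V$ normalized so that $i^2 = j^2 = -1$, and set $k := ij$. A direct computation, using $ji = -ij$, yields $k^2 = -1$ and the Hamilton relations $ij = k = -ji$, $jk = i = -kj$, $ki = j = -ik$, so the $\mathbb{R}$-span of $\{1, i, j, k\}$ is a subalgebra isomorphic to $\mathbb{H}$ (Definition \ref{def:hamiltonians}), and hence $\dim_\mathbb{R} V \ge 3$.

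Finally, to conclude $D = \mathbb{H}$ I would rule out the existence of a fourth independent pure element. Suppose $\ell \in V$ is $B$-orthogonal to $i$, $j$, and $k$; then $\ell$ anticommutes with each. A chain computation gives
\[
\ell k = \ell(ij) = (\ell i)j = (-i\ell)j = -i(\ell j) = -i(-j\ell) = (ij)\ell = k\ell,
\]
while direct anticommutativity forces $\ell k = -k\ell$. Together these yield $k\ell = 0$, and since $D$ is a division ring with $k \ne 0$, we conclude $\ell = 0$. Hence $V = \mathbb{R}i \oplus \mathbb{R}j \oplus \mathbb{R}k$ and $D = \mathbb{H}$. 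In my view the main obstacle is the verification that $V$ is an $\mathbb{R}$-subspace; after that, the rest is a sequence of short identities streaming from the anticommutation/orthogonality dictionary.
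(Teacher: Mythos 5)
Your strategy is correct in outline and genuinely different from the paper's: the paper first identifies $\mathbb{R}[i]\cong\mathbb{C}$ inside $D$ and then splits $D$ into the $\pm 1$-eigenspaces of conjugation by $i$ (Propositions \ref{prop:Deigenspaces}--\ref{prop:DminusC}), exploiting the resulting $\mathbb{C}$-vector space structure, whereas you work with the purely imaginary set $V=\{\alpha\in D:\alpha^2\in\mathbb{R}_{\le 0}\}$ and the positive-definite form $B(\alpha,\beta)=-\tfrac12(\alpha\beta+\beta\alpha)$. Your endgame is fine: the Hamilton relations for $i,j,k=ij$ check out, and the chain computation forcing any $\ell$ orthogonal to $i,j,k$ to satisfy $\ell k=k\ell=-k\ell$, hence $\ell=0$, is valid in a division ring. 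The commutative case via Corollary \ref{cor:fieldRorC} is also fine.

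There is, however, one genuine gap, and it sits exactly at the step you flag as delicate: your proposed justification that $V$ is closed under addition is circular. You reduce the claim to $\alpha\beta+\beta\alpha\in\mathbb{R}$ and assert this follows from polarization plus the quadratic relation satisfied by $\alpha+\beta$. But that relation only says $(\alpha+\beta)^2=t(\alpha+\beta)-n$ for some $t,n\in\mathbb{R}$, so polarization gives $\alpha\beta+\beta\alpha=t(\alpha+\beta)-n-\alpha^2-\beta^2$, which lies in $\mathbb{R}$ precisely when $t(\alpha+\beta)\in\mathbb{R}$, i.e.\ (when $\alpha+\beta\notin\mathbb{R}$) precisely when $t=0$. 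But $t=0$ is exactly the statement that $\alpha+\beta\in V$ --- the thing being proved. An extra input is required. One standard repair: check that $1,\alpha,\beta$ are $\mathbb{R}$-independent whenever $\beta\notin\mathbb{R}\alpha$; write $\alpha+\beta=r+v$ and $\alpha-\beta=s+w$ with $r,s\in\mathbb{R}$ and $v,w\in V$; expand $(\alpha+\beta)^2+(\alpha-\beta)^2=2\alpha^2+2\beta^2\in\mathbb{R}$ to conclude $rv+sw\in\mathbb{R}$; substitute $v=\alpha+\beta-r$ and $w=\alpha-\beta-s$ and use the independence of $1,\alpha,\beta$ to force $r=s=0$. (Alternatively, observe that $\alpha\beta+\beta\alpha$ commutes with $\alpha$ and $\beta$ because $\alpha^2,\beta^2$ are central, and combine this with the containment $\alpha\beta+\beta\alpha\in\mathbb{R}+\mathbb{R}\alpha+\mathbb{R}\beta$ coming from polarization, ruling out commuting $\alpha,\beta$ via Corollary \ref{cor:fieldRorC}.) With that step repaired, the rest of your argument goes through and yields a complete proof along a route distinct from the paper's.
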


\begin{proof} 
Let $D$ be a finite-dimensional $\mathbb{R}$-algebra. We have $\dim_{\mathbb{R}} D = 1$ if and only if $D = \mathbb{R}$. We will show that if $\dim_{\mathbb{R}} D > 1$ then $\mathbb{C}\subset D$, and then that if $\mathbb{C}\subsetneq D$, then $D = \mathbb{H}$.

Suppose $\dim_{\mathbb{R}} D > 1$. Then there exists $j\in D\setminus\mathbb{R}$. Since $D$ is finite dimensional, it cannot be the case that $1, j, j^2, \ldots j^n, \ldots$ are 
$\mathbb{R}$-independent for all $n\in \mathbb{Z}_{\ge 0}$. Thus there exists $n\in \mathbb{Z}_{>0}$ such that  $1, j, j^2, \ldots, j^n$  are linearly dependent. So $j$ satisfies a real polynomial. Let $p(x)$ be the monic polynomial of smallest degree $n$ satisfied by $j$. 
Then the  $\mathbb{R}$-subalgebra of $D$ generated by $j$ is
$$
\mathbb{R}[j] = \{ a_0 + a_1 j + \cdots + a_{n-1} j^{n-1} : a_k\in \mathbb{R} \}\cong \frac{\mathbb{R}[x]}{(p(x))}.
$$ Since $\mathbb{R}$ is commutative and $j$ commutes with itself, $\mathbb{R}[j]$ is commutative.

\begin{lem}\label{lem:RjC} $\mathbb{R}[j]\cong \mathbb{C}$. \end{lem}
Since $D$ is a division ring, $p(x)$ is irreducible by Lemma \ref{lem:divisionminimal}. Then the ideal $(p(x))$ is maximal. Therefore $\displaystyle \mathbb{R}[j] \cong \frac{\mathbb{R}[x]}{(p(x))}$ is a field. Since $\mathbb{R}[j]$ is a field containing $\mathbb{R}$ and $j\notin \mathbb{R}$, by Corollary \ref{cor:fieldRorC} we have $\mathbb{R}[j]\cong \mathbb{C}$.\end{proof}

So if $\dim_\mathbb{R} D >1$ then $\mathbb{C}\subset D$, and 
$D = \mathbb{C}$ if and only if $\dim_\mathbb{R} D = 2$.

Next suppose that $\dim_\mathbb{R} D >2 $. Then there exists $i\in D$ where $i$ is $\mathbb{R}$-independent of $1$ and $j$. By the argument in Lemma \ref{lem:RjC}, we have $\mathbb{R}[i] \cong \mathbb{C}$.

\begin{lem}\label{lem:isquared}
If {\rm dim}$_\mathbb{R}D> 2$ then there exist $i, j\in D$ such that $1, i, j$ are independent and $i^2 = -1$.
\end{lem}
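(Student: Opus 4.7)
The plan is to first produce, inside the rank-two real subalgebra generated by any noncentral element of $D$, a specific element squaring to $-1$, and then to extend this to a linearly independent triple using the hypothesis $\dim_\mathbb{R} D > 2$.

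Since $\dim_\mathbb{R} D > 2 \geq 1$, we may pick some $\alpha \in D \setminus \mathbb{R}$. Exactly as in the argument of Lemma \ref{lem:RjC}, combining Lemma \ref{lem:divisionminimal} (to get irreducibility of the minimal polynomial of $\alpha$) with Lemma \ref{lem:irreducibledegree} (to bound its degree), the minimal polynomial of $\alpha$ over $\mathbb{R}$ is an irreducible monic quadratic $x^2 + bx + c$ with $b^2 - 4c < 0$. Completing the square yields $(\alpha + b/2)^2 = (b^2 - 4c)/4$, a strictly negative real number. Letting $r = \sqrt{(4c - b^2)/4} \in \mathbb{R}_{>0}$ and setting $i = r^{-1}(\alpha + b/2)$, I get $i^2 = -1$. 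Since $\alpha \notin \mathbb{R}$ and $i$ differs from $\alpha$ by an invertible $\mathbb{R}$-affine transformation, $i \notin \mathbb{R}$ as well, so $1, i$ are $\mathbb{R}$-linearly independent.

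Since $\dim_\mathbb{R} D > 2$, the subspace $\mathbb{R} \cdot 1 + \mathbb{R} \cdot i$ is proper in $D$, so there exists $j \in D \setminus (\mathbb{R} + \mathbb{R} i)$. By construction $1, i, j$ are $\mathbb{R}$-linearly independent and $i^2 = -1$, which is exactly what is claimed.

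There is no real obstacle here beyond invoking the preceding lemmas to force the minimal polynomial to be an irreducible quadratic; the rescaling step is just completing the square, and the existence of $j$ is immediate from the dimension hypothesis. The slight subtlety worth verifying explicitly in the write-up is that $r$ is a genuine positive real (using $b^2 - 4c < 0$), so that $r^{-1}$ makes sense in $\mathbb{R} \subset D$ and the computation $i^2 = -1$ goes through.
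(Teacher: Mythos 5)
Your proposal is correct and follows essentially the same route as the paper: both arguments take a non-scalar element, use Lemmas \ref{lem:divisionminimal} and \ref{lem:irreducibledegree} to see its minimal polynomial is an irreducible quadratic, complete the square and rescale by a positive real to obtain $i$ with $i^2=-1$, and then get $j$ from the dimension hypothesis. The only (cosmetic) difference is that the paper fixes an independent triple $1,k,j$ at the outset and modifies $k$, whereas you construct $i$ first and then choose $j$ outside $\mathbb{R}+\mathbb{R}i$, which makes the independence of $1,i,j$ slightly more immediate.
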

\begin{proof}
Since dim$_\mathbb{R}D> 2$ there exist $1, k, j$ $\mathbb{R}$-independent elements of $D$. Since the minimal polynomial of $k$ is quadratic, $k^2 = tk-n$ for some $t, n\in \mathbb{R}$. Replace $k$ with $k^\prime = 2k-t$. Then $1, k^\prime, j$ are independent and $(k^\prime)^2 = t^2 - 4n$. Note that $t^2 - 4n<0$ because the minimal polynomial $x^2 - (t^2 - 4n)$ of $k^\prime$ is irreducible. Finally, replace $k^\prime$ with $i = ak^\prime$ with $a\in \mathbb{R}_{>0}$ chosen so that $i^2 = -1$.
\end{proof}

Consider the map given by conjugation by $i$ in $D$: \begin{align*}
\varphi :D & \to D
\\ x & \mapsto ixi^{-1} = -ixi
\end{align*}
A check of the axioms shows that $\varphi$ is an $\mathbb{R}$-algebra isomorphism. Observe that $\varphi\circ\varphi = $ id. Since $\varphi^2 = 1$ in End$_D$, $\varphi$ satisfies the polynomial $T^2 - 1$, which has roots $\pm 1$. Thus the minimal polynomial of $\varphi$ idivides $T^2 - 1 = (T+1)(T-1)$. So the roots of the minimal polynomial of $\varphi$ are distinct, and then $\varphi$ is diagonalizable.

\begin{prop}\label{prop:Deigenspaces} Let $D^-$ be the eigenspace of eigenvalue $-1$ and $D^+$ be the eigenspace of eigenvalue $1$ with respect to the above map $\varphi$, as vector spaces over $\mathbb{R}$. Then $D = D^+\oplus D^-$. \end{prop}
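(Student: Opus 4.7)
The plan is to give the standard explicit decomposition that goes with any diagonalizable operator satisfying $\varphi^2 = \mathrm{id}$. Since the preceding paragraph already establishes that $\varphi$ is diagonalizable with eigenvalues contained in $\{+1,-1\}$, the decomposition $D = D^+ \oplus D^-$ is really just the eigenspace decomposition. But writing it out by hand gives a short, self-contained argument that works in any characteristic $\ne 2$ (which is all we need since we're over $\mathbb{R}$).

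First I would exhibit the decomposition by the averaging formula: for any $x \in D$,
\[
x \;=\; \frac{x + \varphi(x)}{2} \;+\; \frac{x - \varphi(x)}{2}.
\]
Then I would verify that $\tfrac{1}{2}(x + \varphi(x)) \in D^+$ and $\tfrac{1}{2}(x - \varphi(x)) \in D^-$ by applying $\varphi$ and using $\varphi^2 = \mathrm{id}$: we get $\varphi\bigl(\tfrac{1}{2}(x+\varphi(x))\bigr) = \tfrac{1}{2}(\varphi(x) + x)$ and $\varphi\bigl(\tfrac{1}{2}(x-\varphi(x))\bigr) = \tfrac{1}{2}(\varphi(x) - x) = -\tfrac{1}{2}(x-\varphi(x))$. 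This shows $D = D^+ + D^-$.

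To conclude it is a direct sum, I would check $D^+ \cap D^- = \{0\}$: if $x$ lies in both, then $x = \varphi(x) = -x$, so $2x = 0$, and since $D$ is an $\mathbb{R}$-algebra with $2 \in \mathbb{R}^\times$, we conclude $x = 0$. Combining these two observations gives $D = D^+ \oplus D^-$.

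There is no real obstacle here; the only subtle point is invoking $2 \in \mathbb{R}^\times$ both to form the averages $\tfrac{1}{2}(x \pm \varphi(x))$ and to conclude that $D^+ \cap D^- = 0$. This is exactly why the hypothesis of working over $\mathbb{R}$ (or at any rate a base of characteristic $\ne 2$) is used here, foreshadowing the char $\ne 2$ hypothesis in the generalization to Theorem~\ref{thm:frobenius}'s successor discussed in the introduction.
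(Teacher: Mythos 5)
Your proof is correct and takes essentially the same approach as the paper: both rest on the averaging decomposition $x = \tfrac{1}{2}(x+\varphi(x)) + \tfrac{1}{2}(x-\varphi(x))$. If anything, your version is slightly more complete, since you explicitly verify that the two summands land in $D^+$ and $D^-$ and that $D^+\cap D^- = \{0\}$, steps the paper leaves implicit in asserting the bijection $\psi$.
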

\begin{proof}
Since $\varphi$ is diagonalizable, $D$ has a basis of eigenvectors of $\varphi$. Consider the bijection
\begin{align*}
\psi: D&\longrightarrow D^+\oplus D^-\\
	x&\longmapsto \left(\frac{x+ \varphi(x)}{2},  \frac{x- \varphi(x)}{2} \right)\\
	y+z&\longmapsfrom (y, z)
\end{align*}
The existence of $\psi$ shows that any basis of $D^+$, union a basis of $D^-$ is a basis of eigenvectors of $D$. So $D = D^+\oplus D^-$.\end{proof}

Note that since $\mathbb{R}\cup \{i \}\subset D^+$, we have $\mathbb{R}[i]\cong \mathbb{C}\subset D^+$.

\begin{prop}\label{prop:DplusC}
$D^+ = \mathbb{R}[i] = \mathbb{C}$.
\end{prop}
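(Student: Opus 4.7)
The plan is to show the nontrivial inclusion $D^+ \subseteq \mathbb{R}[i]$; the reverse inclusion is already noted in the paragraph preceding the proposition, since $\mathbb{R}$ and $i$ are fixed by $\varphi$.

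First I would unpack what membership in $D^+$ means in terms of commutation. If $x \in D^+$, then $\varphi(x) = -ixi = x$, which rearranges to $ix = xi$. So $D^+$ is precisely the centralizer of $i$ in $D$, and the task reduces to showing that every element of $D$ that commutes with $i$ already lies in $\mathbb{R}[i]$.

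Fix such an $x$, and consider the $\mathbb{R}$-subalgebra $\mathbb{R}[i, x] \subseteq D$. Because $x$ commutes with $i$ (and both commute with $\mathbb{R}$), this subalgebra is commutative. As a subring of the division ring $D$, it has no zero divisors, and as a subspace of the finite-dimensional $\mathbb{R}$-vector space $D$, it is itself finite-dimensional. A finite-dimensional commutative integral domain over $\mathbb{R}$ is automatically a field: for any nonzero $a$, left multiplication by $a$ is an injective $\mathbb{R}$-linear endomorphism of a finite-dimensional space, hence surjective, so $a$ is invertible. Thus $\mathbb{R}[i, x]$ is a finite field extension of $\mathbb{R}$ containing the copy of $\mathbb{C}$ given by $\mathbb{R}[i]$. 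By Corollary \ref{cor:fieldRorC}, the only finite extensions of $\mathbb{R}$ are $\mathbb{R}$ and $\mathbb{C}$, so $\mathbb{R}[i, x] = \mathbb{R}[i]$, and hence $x \in \mathbb{R}[i]$.

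Combined with the observation $\mathbb{R}[i] \subseteq D^+$ noted earlier, this yields $D^+ = \mathbb{R}[i] \cong \mathbb{C}$. The only subtle step is the passage from ``commutative finite-dimensional domain'' to ``field,'' which is a short vector-space argument, so I do not anticipate a serious obstacle; the real content is recognizing that the eigenspace condition encodes commutation with $i$, which is exactly what is needed to invoke the earlier corollary classifying finite extensions of $\mathbb{R}$.
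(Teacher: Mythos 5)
Your proposal is correct and follows essentially the same route as the paper: identify $D^+$ as the centralizer of $i$, observe that $\mathbb{R}[i][x]$ is then a commutative finite-dimensional subalgebra of the division ring $D$, conclude it is a field, and invoke Corollary \ref{cor:fieldRorC}. If anything, your justification that the subalgebra is a field (injectivity of multiplication by a nonzero element on a finite-dimensional space implies surjectivity) is a little more self-contained than the paper's, which only verifies that $x^{-1}$ lies in $D^+$ before asserting that $\mathbb{C}[x]$ is a division algebra.
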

\begin{proof}
Let $x\in D^+$. Since $x\in D^+$,  $\varphi(x) = ixi^{-1} = x$. So $ix = xi$. Thus the $\mathbb{R}$-algebra generated by $\mathbb{R}[i]$ and $x$, $\mathbb{R}[i][x]\cong \mathbb{C}[x]$ is commutative. We check that $D^+$ is closed under multiplicative inverse:
\begin{align*}
x&=  ixi^{-1} = \varphi(x) \\
x^{-1}  &= ix^{-1}i^{-1} = \varphi(x^{-1})
\end{align*}

So $\varphi(x^{-1}) =x^{-1}$, which shows that $x^{-1}\in D^+$. Since $\mathbb{C}[x]$ is a commutative division algebra, it is a field.
Then by Corollary \ref{cor:fieldRorC}, $x\in \mathbb{C}$, so $D^+ = \mathbb{C}$.
\end{proof}

We have shown that $D = D^+$ if and only if $D = \mathbb{C}$. So now suppose that $\dim_\mathbb{R} D > 2$; then there exists $y\in D^-, y\not=0$. Then $iyi^{-1} = -y $ and so $iy = -yi$.  Note that $y\notin\mathbb{C}$. By the same reasoning as in Proposition \ref{prop:DplusC}, $y^{-1}\in D^-$. 
\begin{prop}\label{prop:DminusC}
$D^- = \mathbb{C}[y]$.
\end{prop}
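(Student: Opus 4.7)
The plan is to prove $D^- = \mathbb{C}y$ by establishing both containments, leveraging that $\varphi$ is an $\mathbb{R}$-algebra isomorphism and that $D = D^+ \oplus D^-$ is a direct sum decomposition with $D^+ = \mathbb{C}$.

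First I would establish that $y^2 \in \mathbb{R}$. Since $y \in D \setminus \mathbb{R}$, the arguments of Lemmas \ref{lem:irreducibledegree} and \ref{lem:divisionminimal} provide an irreducible real quadratic $y^2 + ay + b = 0$ with $a, b \in \mathbb{R}$. Applying $\varphi$ yields $\varphi(y^2) = \varphi(y)^2 = (-y)^2 = y^2$, so $y^2 \in D^+$. Writing $y^2 = -ay - b$ and observing that $-b \in \mathbb{R} \subseteq D^+$ while $-ay \in D^-$, the directness of $D = D^+ \oplus D^-$ forces $a = 0$. Hence $y^2 = -b \in \mathbb{R}$, and irreducibility of $x^2 + b$ over $\mathbb{R}$ gives $y^2 < 0$.

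Next, for any $c \in \mathbb{C} = D^+$, the identity $\varphi(cy) = \varphi(c)\varphi(y) = c(-y) = -cy$ shows $cy \in D^-$, giving $\mathbb{C}y \subseteq D^-$. For the reverse containment, given nonzero $z \in D^-$ I would consider $zy^{-1}$, which exists because $D$ is a division algebra. Applying $\varphi$ to $y \cdot y^{-1} = 1$ yields $\varphi(y^{-1}) = \varphi(y)^{-1} = -y^{-1}$, so $y^{-1} \in D^-$. Then $\varphi(zy^{-1}) = (-z)(-y^{-1}) = zy^{-1}$, placing $zy^{-1}$ in $D^+$. By Proposition \ref{prop:DplusC}, $zy^{-1} \in \mathbb{C}$, so $z = (zy^{-1})\, y \in \mathbb{C}y$.

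The main conceptual step is the first one: recognizing that the eigenspace decomposition can be used to force the linear coefficient of the minimal polynomial of $y$ to vanish, so that $y^2$ becomes a real scalar. The rest is an efficient exploitation of $\varphi$ being multiplicative together with the invertibility guaranteed by the division algebra structure, which is what lets us pass from an arbitrary $z \in D^-$ back to $\mathbb{C}y$ via $z = (zy^{-1})\, y$.
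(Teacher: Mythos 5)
Your proof is correct and follows essentially the same route as the paper: both show $\mathbb{C}y \subseteq D^-$ directly from $\varphi$ being multiplicative, and both obtain the reverse containment by noting that a product of two elements of $D^-$ lands in $D^+ = \mathbb{C}$ and writing $z = (zy^{-1})y$. Your opening paragraph establishing $y^2 \in \mathbb{R}$ is correct but not needed for this proposition (the paper defers that normalization of $y$ until after it, when assembling $\mathbb{H}$).
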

\begin{proof}
Observe that for $y_1, y_2\in D^-$, $y_1 y_2\in D^+$:
\begin{align*}
\varphi(y_1y_2) = iy_1 y_2 i^{-1} = iy_1 (i^{-1} i) y_2 i^{-1} = (-y_1)( -y_2) = y_1y_2.
\end{align*}
Let $k\in D^-$. Note that $k = (k  y^{-1})y$. Since $k y^{-1}\in D^+ = \mathbb{C}$, we have $k\in \mathbb{C}y$. 
Conversely, if $k\in \mathbb{C}y$, then $k\in D^-$ since $y\in D^-$ and $\mathbb{C}\subset D^+$.
Therefore, $D^- = \mathbb{C}y$. \end{proof}

Therefore if dim$_\mathbb{R} D> 2$, then $D\cong D^+\oplus D^-\cong \mathbb{C}\oplus\mathbb{C}y$. As in Lemma \ref{lem:isquared} we may take $y^2  = -1$. So we have $D$ generated by $1, i, y, iy$ where $i^2 = y^2 = -1$ and $iy = -yi$. This describes $\mathbb{H}$.

%%%%%%%%%%%%%%%%%%%%%%%%%%%%%%%%%%%%%section
\section{Standard Involutions and Exceptional Rings}

Algebras of rank 1, quadratic rings, and finite-dimensional division algebras containing $\mathbb{R}$ each possess a standard involution. Consequently, each element in such an algebra satisfies a polynomial of degree $\le 2$ with coefficients in that algebra's ring of scalars. In our classification, it is natural to consider which algebras possess such a map. In this section we discuss standard involutions, give a few important examples, and review some of their properties.

As in \S 1, let $R$ be a domain and let $B$ be an $R$-algebra.
\begin{definition}\label{def:involution}
An \emph{involution} $^-: B\to B$ is a self-inverse (i.e. $\overline{\overline{x}} = x$ for all $x\in A$) $R$-linear map satisfying $\overline{xy} = \overline{y}\,  \overline{x}$ for all $x, y\in B$. An involution is \emph{standard} if it additionally satisfies $x\,  \overline{x}\in R$ for all $x\in B$.
\end{definition}

\begin{lem}\label{lem:xplusxbar} If $^-$ is a standard involution on $B$, then $x + \overline{x}\in R$ for all $x\in B$. \end{lem}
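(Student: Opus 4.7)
The plan is to apply the standard involution property $y\overline{y} \in R$ not to $x$ itself but to the shifted element $x+1$, and then extract the trace from the resulting expansion.

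First I would use $R$-linearity of the involution together with $\overline{1}=1$ to compute $\overline{x+1} = \overline{x}+1$. Then I would expand
\begin{equation*}
(x+1)\overline{(x+1)} = (x+1)(\overline{x}+1) = x\overline{x} + x + \overline{x} + 1.
\end{equation*}
By the standard involution property applied to the element $x+1 \in B$, the left-hand side lies in $R$. By the same property applied to $x$, the term $x\overline{x}$ lies in $R$, and of course $1 \in R$. Subtracting, we conclude $x + \overline{x} \in R$, as desired.

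There is essentially no obstacle here: the only ``trick'' is recognizing that the hypothesis $y\overline{y} \in R$ is a statement about every element of $B$, so one may freely probe it at $x+1$ rather than just $x$, and the cross term in the expansion produces the sum $x+\overline{x}$. The use of $\overline{1}=1$ and $R$-linearity are both built into Definition \ref{def:involution}, so no additional setup is needed.
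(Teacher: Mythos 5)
Your proof is correct and is exactly the argument the paper gives: evaluate the standard involution property at $x+1$, expand $(x+1)(\overline{x}+1) = x\overline{x} + x + \overline{x} + 1$, and subtract the terms already known to lie in $R$. No differences worth noting.
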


\begin{proof} For $x\in B$, we have $(x+1)(\overline{x+1})\in R$, so 
 $x\, \overline{x} + x + \overline{x} + 1\in R$; since $x\overline{x}\in R$, subtracting $x\, \overline{x} + 1$ from each side we get $x + \overline{x}\in R$.
\end{proof}

Let $B$ have standard involution $^-$. For $x\in B,$ let $t(x)$ be the element of $R$ such that $x + \overline{x} = t(x)$. Then $\overline{x} = t(x) - x$, which shows that $^-$ is determined by the functional $t:B\to R$, $ x\mapsto t(x)$.

Observe that  $t$ is the sum of $^-$ and the identity, each of which is $R$-linear, and so $t$ is $R$-linear. Thus $t$ is an $R$-linear map. 

\begin{rem}\label{rem:invpoly} If $A$ has a standard involution then every $x\in A$ is a root of a polynomial $x^2 - x(x+\overline{x}) + x\, \overline{x}$ in $R[x]$.
\end{rem}

\begin{ex}\label{ex:quaternioninvolution}
A quaternion algebra over a field possesses a standard involution.
\end{ex}
\begin{proof}
Let $F$ be a field and $Q = \left( \dfrac{a, b}{F} \right)$ be a quaternion $F$-algebra with basis $1, i, j, ij$ satisfying $i^2 = a\in F^\times$, $j^2 = b\in F^\times$, and $ji = -ij$. Then consider the $F$-linear map $^-:Q\to Q$ given by $\overline{p + qi + rj + sij} = p - qi - rj - sij$. Clearly, $^-$ is self-inverse and fixes $F$. To check that $^-$ is an involution we show that $\overline{mn} = \overline{n}\, \overline{m}$ for all $m, n\in Q$. Let $m = p+qi+rj+sij$ and $n = w+xi+yj+zij$. Then
\begin{align*}
\overline{mn}&= \overline{pw + pxi + pyj+pzij} + \overline{qwi + aqx + qyij + aqzj}\\
	&\ \ \ +\overline{rwj  - rxij + bry - brzi} + \overline{swij  - asxj + bsyi  - absz}\\
	&= \overline{pw+aqx+bry-absz} +\overline{(px+qw-brz+bsy)i}\\
	&\ \ \ +\overline{(py+aqz+rw-asx)j} +\overline{(pz+qy-rx+sw)ij}\\
	&= pw+aqx+bry-absz -(px+qw-brz+bsy)i\\
	&\ \ \ -(py+aqz+rw-asx)j -(pz+qy-rx+sw)ij\\
\end{align*}
Next we compute $\overline{n}\, \overline{m}$:
\begin{align*}
\overline{n}\, \overline{m}&= (w-xi-yj-zij)(p-qi-rj-sij)\\
	&= pw  - qwi - rwj - swij   - pxi + aqx +rxij + asxj \\
	&\ \ \  - pyj -qyij +bry -bsyi - pzij -aqzj+brzi - absz\\
	&= pw+aqx+bry-absz -(px+qw-brz+bsy)i\\
	&\ \ \ -(py+aqz+rw-asx)j -(pz+qy-rx+sw)ij\\
\end{align*}
So $\overline{mn} = \overline{n}\, \overline{m}$, which shows that $^-$ is an involution. Next we check that $^-$ is standard:
\begin{align*}
m\overline{m}&= (p+qi+rj+sij)(p-qi-rj-sij)\\
	&= p^2 -pqi-prj-psij + pqi-aq^2-qrij - aqsj\\
	&\ \ \ +prj + qrij - br^2+brsi + psij + aqsj - brsi + abs^2\\
	&= p^2 - aq^2 -br^2 + abs^2\in F
\end{align*}\end{proof}

\begin{cor}\label{cor:divisioninvolution}
Finite-dimentional division algebras containing $\mathbb{R}$ possess a standard involution.
\end{cor}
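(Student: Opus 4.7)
The plan is to invoke Frobenius's theorem (Theorem \ref{thm:frobenius}) to reduce to a finite case check. By that theorem, any finite-dimensional division $\mathbb{R}$-algebra is isomorphic to one of $\mathbb{R}$, $\mathbb{C}$, or $\mathbb{H}$, so it suffices to exhibit a standard involution on each.

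For $\mathbb{R}$, the identity map serves as a standard involution: it is $\mathbb{R}$-linear, self-inverse, anti-multiplicative trivially, and $x \cdot \overline{x} = x^2 \in \mathbb{R}$. For $\mathbb{C}$, one takes complex conjugation $a+bi \mapsto a - bi$; this is $\mathbb{R}$-linear and an involution (as in the quadratic case, following from the fact noted in the ``Remarks on Quadratic Algebras'' subsection that every quadratic $R$-algebra has a standard involution, applied to $\mathbb{C} \cong \mathbb{R}[x]/(x^2+1)$), and $(a+bi)(a-bi) = a^2 + b^2 \in \mathbb{R}$.

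For $\mathbb{H}$, I would observe that $\mathbb{H}$ is the quaternion $\mathbb{R}$-algebra $\left(\tfrac{-1,-1}{\mathbb{R}}\right)$ in the notation of Example \ref{ex:quaternioninvolution}: the defining relations from Definition \ref{def:hamiltonians} (namely $i^2 = j^2 = -1$ and $ji = -ij$) match the quaternion algebra relations with $a = b = -1 \in \mathbb{R}^\times$. Therefore the map $a + bi + cj + dij \mapsto a - bi - cj - dij$ from Remark \ref{rem:hamiltonianinvolution} is a standard involution by Example \ref{ex:quaternioninvolution}, with $x\overline{x} = a^2 + b^2 + c^2 + d^2 \in \mathbb{R}$.

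There is no real obstacle here; the work has all been done in Theorem \ref{thm:frobenius} and Example \ref{ex:quaternioninvolution}. The only conceptual point to verify is that Hamilton's $\mathbb{H}$ as presented in Definition \ref{def:hamiltonians} fits the quaternion algebra framework of Example \ref{ex:quaternioninvolution} — which it does with $(a,b) = (-1,-1)$ — so the general quaternion involution specializes to the expected conjugation on $\mathbb{H}$.
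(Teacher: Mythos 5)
Your proof is correct and follows essentially the same route as the paper: both invoke Frobenius's theorem to reduce to the three cases $\mathbb{R}$, $\mathbb{C}$, $\mathbb{H}$ and then cite the identity, complex conjugation, and the quaternion involution of Example \ref{ex:quaternioninvolution} respectively. Your explicit identification of $\mathbb{H}$ with $\left(\tfrac{-1,-1}{\mathbb{R}}\right)$ is a small but welcome addition of detail that the paper leaves implicit.
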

\begin{proof}
On $\mathbb{R}$, the identity is a standard involution. The standard involution on $\mathbb{C}$ is complex conjugation. The Hamiltonians $\mathbb{H}$ are a quaternion algebra and have the standard involution described in Example \ref{ex:quaternioninvolution}.\end{proof}

\begin{ex}\label{ex:adjoint}
If $F$ is a field then the adjoint map $[\begin{smallmatrix} a& b\\c& d  \end{smallmatrix}]\mapsto [\begin{smallmatrix} d& -b\\-c& a  \end{smallmatrix}]$ is a standard involution on $M_2(F)$.
\end{ex}
\begin{proof}Consider the map that sends matrix $M$ to its adjoint:
\begin{align*}
^-: M_2(F)&\longrightarrow M_2(F)\\
\begin{bmatrix}
a& b\\
c& d
\end{bmatrix}
&\longmapsto 
\overline{\begin{bmatrix}
a& b\\
c& d
\end{bmatrix}}
=
\begin{bmatrix}
d& -b\\
-c&a
\end{bmatrix}
\end{align*}
To check that $^-$ is an involution, we first verify that $^-$ is self-inverse:
\begin{align*}
\overline{
\overline{
\begin{bmatrix}
a& b\\
c& d
\end{bmatrix}}
}&= \overline{
\begin{bmatrix}
d& -b\\
-c& a
\end{bmatrix}
} = 
\begin{bmatrix}
a& b\\
c& d
\end{bmatrix}
\end{align*}

Next we show that $^-$ is $F$-linear. Clearly, $^-$ fixes $[\begin{smallmatrix} 1&0\\ 0&1 \end{smallmatrix}]$. To verify that $^-$ is additive we compute:
\begin{align*}\overline{
\begin{bmatrix}a& b\\ c& d  \end{bmatrix} + 
\begin{bmatrix}p& q\\ r& s  \end{bmatrix}} &
=\overline{
\begin{bmatrix} a+p& b+q\\ c+r& d+s  \end{bmatrix}}=
\begin{bmatrix} d+s & -(b+q)\\ -(c+r) & a+p  \end{bmatrix}\\ 
	&= 
\begin{bmatrix} d& -b\\ -c& a \end{bmatrix} + 
 \begin{bmatrix} s& -q\\ -r& p \end{bmatrix}  = 
\overline{\begin{bmatrix}a& b\\ c& d  \end{bmatrix} }+ 
\overline{ \begin{bmatrix}p& q\\ r& s  \end{bmatrix} } 
\end{align*}
Then we show that $^-$ respects scaling by $k\in F$:
\begin{align*}
\overline{k\begin{bmatrix} a& b\\ c& d \end{bmatrix}}= \overline{ \begin{bmatrix} ka& kb\\kc&kd \end{bmatrix}} = 
\begin{bmatrix} kd&-kb\\ -kc&ka  \end{bmatrix}  
= k \begin{bmatrix}d &-b\\ -c&a  \end{bmatrix} =   k\ \overline{\begin{bmatrix} a& b\\c& d \end{bmatrix}}
\end{align*}
For $^-$ to be an involution it remains to verify that $\overline{xy} = \overline{y}\, \overline{x}$ for all $x, y\in M_2(D)$:
\begin{align*}
\overline{
\begin{bmatrix}
a& b\\
c& d
\end{bmatrix}
\begin{bmatrix}
p& q\\
r& s
\end{bmatrix}
} &= \overline{
\begin{bmatrix}
ap+br& aq+bs\\
cp+dr & cq + ds
\end{bmatrix}}
= 
 \begin{bmatrix}
 cq+ds & -(aq+bs)\\
 -(cp + dr)& ap+br
 \end{bmatrix}\\ 
 &=
 \begin{bmatrix}
s & -q \\
 -r & p
 \end{bmatrix}
 \begin{bmatrix}
 d & -b \\
 -c & a
 \end{bmatrix}
 = \overline{
\begin{bmatrix}
p& q\\
r& s
\end{bmatrix}} \ \overline{
\begin{bmatrix}
a& b\\
c& d
\end{bmatrix}}
\end{align*}
Therefore $^-$ is an involution. To determine whether $^-$ is standard we check that $x\overline{x}\in F$ for all $x\in M_2(D)$:
\begin{align*}
\begin{bmatrix} a& b\\ c& d  \end{bmatrix}
\overline{\begin{bmatrix} a& b\\ c& d  \end{bmatrix}  } &= 
\begin{bmatrix} a& b\\ c& d  \end{bmatrix}
\begin{bmatrix} d& -b\\ -c& a  \end{bmatrix} = 
\begin{bmatrix}  ad -bc& -ab + ba\\ cd - dc& -cb + da   \end{bmatrix}\\ &= (ad - bc)
\begin{bmatrix} 1& 0 \\ 0 & 1 \end{bmatrix}  \in F
\end{align*}
\end{proof}

\begin{ex}\label{ex:FcrossF}
On $F\times F$, the map $(x, y)\mapsto (y, x) = \overline{(x, y)}$ is a standard involution. 
This map is clearly $F$-linear and self-inverse. Since (component-wise) multiplication is commutative, $\overline{ab} = \overline{b}\, \overline{a}$ for all $a, b\in F\times F$. To see that this involution is standard we compute
\begin{align*}
(x, y)\overline{(x,y)} = (xy, yx) = xy(1, 1)\in F.
\end{align*}\end{ex}

An important class of rings possessing standard involutions are exceptional rings {\rm \cite{Grossand, Voightlow}}.
\begin{definition}\label{def:exceptional}
An \emph{exceptional ring}  is an $R$-algebra $B$ with the property that there is a left ideal $M\subset B$ with $B = R \oplus M$ such that the left multiplication map $M \to$ End$_R(M) $ factors through a linear map $t : M \to R$, i.e. $xy = t(x)y$ for $x, y \in M$. \end{definition}
\begin{prop}\label{prop:exceptionalinvolution}
The map $x \mapsto t(x)-x$ for $x \in M$ extends to a standard involution on $B$, (and so any exceptional ring possesses a standard involution).
\end{prop}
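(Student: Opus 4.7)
The plan is to define the extension explicitly and then verify each axiom in turn, reducing everything to the defining relation $xy = t(x)y$ for $x,y \in M$, the $R$-linearity of $t$, and the centrality of $R$ in $B$.

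First I would define $^-: B \to B$ by
\[
\overline{r+x} = r + t(x) - x \quad \text{for } r \in R,\ x \in M,
\]
using the direct sum decomposition $B = R \oplus M$. Since $t$ is $R$-linear and negation and the identity are $R$-linear, this extension is $R$-linear; it fixes $1$ because $1 \in R$; and it is self-inverse because applying $^-$ to $\overline{r+x} = (r + t(x)) + (-x) \in R \oplus M$ returns $(r+t(x)) + t(-x) - (-x) = r + t(x) - t(x) + x = r+x$, using $t(-x) = -t(x)$.

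The main step is the anti-multiplicative identity $\overline{ab} = \overline{b}\,\overline{a}$ for $a = r+x$ and $b = s+y$, with $r,s \in R$ and $x,y \in M$. Expanding $ab = rs + (ry + sx + xy)$ and using $xy = t(x)y \in M$ (since $M$ is a left ideal and $t(x) \in R$), I would apply the definition of $^-$ to obtain
\[
\overline{ab} = rs + r\,t(y) + s\,t(x) + t(x)t(y) - ry - sx - t(x)y.
\]
On the other side, expanding $\overline{b}\,\overline{a} = (s+t(y)-y)(r+t(x)-x)$ term by term and using centrality of $R$ (so $xr = rx$, $yt(x) = t(x)y$, etc.) together with the defining relation $yx = t(y)x$, the cross term $t(y)x$ cancels against $(-y)(-x) = yx = t(y)x$, and I expect the remaining terms to reorganize into exactly the expression above. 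The commutativity of $R$ takes care of $sr = rs$ and $t(y)t(x) = t(x)t(y)$, completing the match. This bookkeeping is where I expect the main difficulty: one must be careful that $M$-valued terms are only simplified using $xy = t(x)y$ when both factors lie in $M$, and that all $R$-valued scalars can be moved freely.

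Finally, for the standard condition, I would compute $a\overline{a}$ for $a = r+x$:
\[
a\overline{a} = (r+x)(r + t(x) - x) = r^2 + r\,t(x) - rx + xr + x\,t(x) - x^2.
\]
Since $R$ is central we have $xr = rx$ and $x\,t(x) = t(x)x$, and by the defining relation applied with $y = x$ we get $x^2 = t(x)x$. Thus $x\,t(x) - x^2 = t(x)x - t(x)x = 0$, and the middle terms cancel to yield $a\overline{a} = r^2 + r\,t(x) \in R$. This establishes that $^-$ is a standard involution, completing the proof.
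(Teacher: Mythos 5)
Your proposal is correct and follows essentially the same route as the paper: define the $R$-linear extension fixing $1$, verify anti-multiplicativity by expanding $\overline{(r+x)(s+y)}$ and $\overline{s+y}\,\overline{r+x}$ using $xy=t(x)y$ and $yx=t(y)x$ (with the $t(y)x$ terms cancelling), and check the standard condition via $x^2=t(x)x$. Your only (harmless) deviation is that you verify $a\overline{a}\in R$ for a general element $r+x$ rather than just for $x\in M$, which is if anything slightly more complete than the paper's check.
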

\begin{proof}
Let $^-:B\to B$ be the $R$-linear extension of the map $x\mapsto t(x)-x$,  fixing 1. To see that $^-$ is self-inverse compute
$$
\overline{\overline{x}} = \overline{t(x) - x} = t(x) - (t(x) - x) = x.
$$
Let $a, b\in R$ and $x, y\in M$. To check that $^-$ is an involution we compute
\begin{align*}
\overline{(a+x)(b+y)} &= \overline{ab+ay+bx + xy} = ab + a\overline{y} + b\overline{x} + t(x)\overline{y}\\
&= ab + at(y) - ay + bt(x) - bx + t(x)t(y) - t(x)y.
\end{align*}

Next we find the product
\begin{align*}
(\overline{b+y}) (\overline{a+x})&= (b + t(y) - y)(a + t(x) - x)\\
	&= ab + bt(x)  - bx + at(y) + t(x)t(y) - t(y)x - ay - t(x)y + t(y)x\\
	&= ab + at(y) - ay + bt(x) - bx + t(x)t(y) - t(x)y.
\end{align*}
Thus $^-$ is an involution.

To check that $^-$ is standard, compute
$$
x\overline{x} = xt(x) - x^2 = xt(x) - t(x)x = 0\in R.
$$\end{proof}

In \cite{Voightlow}, Voight shows that for algebras of rank 3, possession of a standard involution is equivalent to being an exceptional ring.
\begin{thm}[Voight] An $R$-algebra $B$ of rank 3 has a standard involution if and only
if it is an exceptional ring {\rm \cite{Voightlow}}. 
\end{thm}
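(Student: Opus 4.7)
The backward direction (exceptional implies standard involution) is exactly Proposition \ref{prop:exceptionalinvolution}, so the content of the theorem lies in the forward direction: showing that every rank-$3$ algebra $B$ with standard involution is exceptional.

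My plan is to make the multiplication of $B$ totally explicit in a basis and then read off the exceptional structure. First I would pick a basis $1, e_1, e_2$ of $B$ via a rank-$3$ analogue of Lemma \ref{lem:basis1}. The standard involution then acts by $\overline{e_i} = \tau_i - e_i$ with $\tau_i = \tau(e_i) \in R$, and if I write $e_1 e_2 = P + Q e_1 + S e_2$, the combined constraints of associativity $((e_1 e_1)e_2 = e_1(e_1 e_2))$ and $\overline{e_1 e_2} = \overline{e_2}\,\overline{e_1}$ force $P = -SQ$, the norms $e_i \overline{e_i}$ to equal $S(\tau_1 - S)$ and $Q(\tau_2 - Q)$ respectively, and $e_2 e_1 = -Q'S' + Q' e_1 + S' e_2$ with $S' := \tau_1 - S$ and $Q' := \tau_2 - Q$. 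This is the universal cubic multiplication table of Theorem \ref{thm:associative}.

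With the table in hand, I would propose the exceptional data $M := Ru + Rv$ with $u := e_1 - S'$ and $v := e_2 - Q$, and check three points in turn. The change-of-basis matrix from $\{1, e_1, e_2\}$ to $\{1, u, v\}$ is unipotent, so $B = R \oplus M$ as $R$-modules. A direct application of the multiplication table yields $e_1 u = S u$, $e_2 u = Q' u$, $e_1 v = S v$, and $e_2 v = Q' v$, so $M$ is a left ideal of $B$. Within $M$ the same table gives $u^2 = (S - S') u$, $uv = (S - S') v$, $vu = (Q' - Q) u$, and $v^2 = (Q' - Q) v$. Defining $t : M \to R$ to be the $R$-linear map with $t(u) = S - S'$ and $t(v) = Q' - Q$, bilinearity then yields $xy = t(x) y$ for all $x, y \in M$, exhibiting $B$ as exceptional.

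The hardest part is guessing $u$ and $v$. The motivation is that in an exceptional ring every $x \in M$ satisfies $x^2 = t(x) x$, so I want scalar shifts of $e_1$ and $e_2$ producing such a relation. Since $e_1$ satisfies the quadratic $x^2 - \tau_1 x + n_1 = 0$ whose two ``roots in $R$'' are $S$ and $S' = \tau_1 - S$ (because $n_1 = SS'$ and $\tau_1 = S + S'$), the shift $u = e_1 - S'$ is essentially forced; the asymmetric partner $v = e_2 - Q$ (rather than $e_2 - Q'$) then makes the cross products $uv$ and $vu$ collapse as required, while the products $e_i u, e_i v$ collapse into the single shared ``eigenvalues'' $S$ and $Q'$. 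Notably, the construction never divides by $2$, so the argument runs uniformly over any base domain $R$, matching the generality of the theorem.
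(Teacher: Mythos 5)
Your proof is correct, and it takes a genuinely different route from the paper's. The paper reaches this theorem by first deriving the universal multiplication table \ref{eqn:mtable} for an arbitrary cubic algebra, splitting into the commutative case (C) and the noncommutative case (E) via Lemma~\ref{lem:cases}, proving case (E) is exceptional (Proposition~\ref{prop:exceptional}), and separately proving that a commutative cubic algebra with a standard involution must be the nilproduct ring (Lemma~\ref{lem:nilproduct}), which is exceptional. You instead exploit the standard involution from the outset: $e_i\overline{e_i}\in R$ forces $e_i^2\in R+Re_i$, two associativity identities pin down $P=-SQ$ and the norms $SS'$, $QQ'$, and the antihomomorphism property determines $e_2e_1$; the single ideal $M=Ru+Rv$ with $u=e_1-S'$, $v=e_2-Q$ then works uniformly, with no commutative/noncommutative case split. (Indeed your $u,v$ specialize to $-(n-i)$ and $j$ in the paper's case (E) and to $i,j$ in the nilproduct case, and I have checked that all of your stated products $e_1u=Su$, $e_2u=Q'u$, $e_1v=Sv$, $e_2v=Q'v$, $u^2=(S-S')u$, $uv=(S-S')v$, $vu=(Q'-Q)u$, $v^2=(Q'-Q)v$ follow from your table.) What the paper's longer route buys is the full classification (Theorem~\ref{thm:main}) as a byproduct; what yours buys is a shorter, uniform, and self-contained proof of this particular equivalence. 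Two small caveats: your table is not quite ``the universal cubic multiplication table of Theorem~\ref{thm:associative}'' but rather its specialization to algebras with a standard involution (note \ref{eqn:mtable} allows a $cj$ term in $i^2$, which the involution rules out); and the existence of a basis containing $1$ does not follow from a verbatim rank-$3$ analogue of Lemma~\ref{lem:basis1}, since completing a unimodular row of length $3$ to an invertible matrix is not automatic over a general domain --- but the paper itself cites \cite{Voightlow} for exactly this fact, so you are on the same footing there.
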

We give an elementary proof of this theorem in the following section.

%%%%%%%%%%%%%%%%%%%%%%%%%%%section
\section{Associative Algebras of Rank 3}

Let $R$ be a domain. In this section we classify $R$-algebras that are free of rank 3 as an $R$-module. First we produce a universal multiplication table for such algebras (Equation \ref{eqn:mtable}). Then our main result is as follows. 
\begin{thm}\label{thm:main} A free $R$-algebra of rank $3$ is either commutative or possesses a standard involution.
\end{thm}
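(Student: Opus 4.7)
The plan is to lean on Theorem \ref{thm:associative}: fix a basis $1, i, j$ for $A$ (via a rank-$3$ analog of Lemma \ref{lem:basis1}) and write out the universal multiplication table
\begin{align*}
i^2 &= a_0 + a_1 i + a_2 j, & ij &= c_0 + c_1 i + c_2 j, \\
j^2 &= b_0 + b_1 i + b_2 j, & ji &= d_0 + d_1 i + d_2 j,
\end{align*}
with coefficients in $R$ satisfying the associativity relations provided by that theorem. I would then split into cases according to whether the ``off-diagonal'' coefficients $a_2$ and $b_1$ both vanish.

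In the first case, suppose (by the symmetry $i \leftrightarrow j$) that $a_2 \neq 0$. The subring $R[i] \subseteq A$ is commutative, and it contains $a_2 j = i^2 - a_1 i - a_0$. Therefore $i$ commutes with $a_2 j$, yielding $a_2(ij) = i(a_2 j) = (a_2 j) i = a_2(ji)$. Since $A$ is free over the domain $R$, multiplication by the nonzero element $a_2$ is injective on $A$, forcing $ij = ji$. As $A$ is $R$-spanned by $\{1, i, j\}$, this commutation extends $R$-bilinearly to commutativity of all of $A$.

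In the second case $a_2 = b_1 = 0$, Lemma \ref{lem:xplusxbar} forces any candidate standard involution to act by $\overline{i} = a_1 - i$ and $\overline{j} = b_2 - j$. Take this as the definition of an $R$-linear map $^-: A \to A$ (with $\overline{1} = 1$) and verify it is a standard involution: self-inverse on $1, i, j$ by inspection; $i\,\overline{i} = -a_0 \in R$ and $j\,\overline{j} = -b_0 \in R$ using $a_2 = b_1 = 0$; and the anti-multiplicativity identities $\overline{ij} = \overline{j}\,\overline{i}$ and $\overline{ji} = \overline{i}\,\overline{j}$ (the only nontrivial cases on basis pairs) reduce to coefficient relations of the shape $c_1 + d_1 = b_2$, $c_2 + d_2 = a_1$, together with a constant-term compatibility.

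The main obstacle is this last step: showing that the associativity constraints of Theorem \ref{thm:associative}, specialized to $a_2 = b_1 = 0$, indeed force the coefficient identities needed for $^-$ to be anti-multiplicative on $i$ and $j$. My plan is to expand the associativity relations $(i \cdot i)j = i(ij)$, $(ij)i = i(ji)$, and $(ji)j = j(j \cdot i)$ (and their $i \leftrightarrow j$ analogs) and match coefficients against the basis $1, i, j$; the hope is that the resulting system cleanly yields the three desired identities rather than leaving residual obstructions, so that the candidate map $^-$ really is a standard involution.
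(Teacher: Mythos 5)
Your Case 1 is correct, and it is in fact a cleaner route to that half of the dichotomy than the one the paper takes: since $a_2 j = i^2 - a_1 i - a_0$ lies in the commutative subring $R[i]$ and $R$ is central, associativity gives $a_2(ij - ji) = 0$, and freeness of $A$ over the domain $R$ kills $ij - ji$ coefficientwise. (The paper instead passes to a ``good basis'' with $ij \in R$, grinds out the full list of associativity constraints, and reads the dichotomy off the resulting relations $cm = cn = my = ny = 0$ in Lemma \ref{lem:cases}.)

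The gap is in Case 2, and it is exactly the ``residual obstruction'' you flag at the end: it is real. The disjunction in the theorem is inclusive, so when $a_2 = b_1 = 0$ the algebra may still be commutative, and in that situation your candidate map $\overline{i} = a_1 - i$, $\overline{j} = b_2 - j$ need not be an involution at all --- the coefficient identities you hope to extract from associativity are simply not consequences of associativity alone. Concretely, take $A = R \times R \times R$ with $i = (1,0,0)$ and $j = (0,1,0)$, so that $i^2 = i$, $j^2 = j$, $ij = ji = 0$ and $a_2 = b_1 = 0$. Then $\overline{ij} = 0$ while $\overline{j}\,\overline{i} = (1-j)(1-i) = 1 - i - j \neq 0$, and indeed this $A$ admits no standard involution whatsoever (over $R = \mathbb{Z}$ the element $(0,1,2)$ has minimal polynomial of degree $3$, contradicting Remark \ref{rem:invpoly}). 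The fix is to assume in Case 2 that $A$ is \emph{not} commutative (otherwise you are already done) and to feed $ij \neq ji$, together with the associativity relations and the domain hypothesis on $R$, into your coefficient computation; this is what collapses the table to the exceptional form $i^2 = ni$, $ij = 0$, $ji = -mn + mi + nj$, $j^2 = mj$ with $m, n$ not both zero, after which the candidate involution $\overline{i} = n - i$, $\overline{j} = m - j$ genuinely checks out. This is precisely the content of Lemma \ref{lem:cases}, the case (E) discussion, and Proposition \ref{prop:exceptionalstandard} in the paper. Without non-commutativity as an input, the verification in your Case 2 fails.
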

We further show the only commutative algebra which also possesses a standard involution is the \emph{nilproduct algebra}, defined to be the algebra in which the product of non-scalar basis elements is zero: $A = \dfrac{R[x,y]}{(x^2, y^2, xy, yx)} = \dfrac{R[x, y]}{(x, y)^2}$.

In \cite{Voightlow}, Voight shows that $R$ is a direct summand of $A$, i.e., $A$ has a basis containing 1. Let $1, i, j$ be a basis of $A$. Then multiplication in $A$ is defined as follows, for some $a, b, c, d, e, f, l, m, n, x, y, z \in R$
\begin{equation}
\begin{array}{ccl}
i^2 &  = & a + bi + cj \\
ij &  = & d + ei + fj \\
ji &  = & l + mi + nj \\
j^2 &  = & x + yi + zj \\
\end{array}
\end{equation}

This multiplication table can be simplified by adjusting the basis. As in the paper by Gross and Lucianovic \cite{Grossand}, we define a \emph{good basis} to be a basis of the form $1, i, j$ where $ij\in R$. This adjustment is analogous to ``completing the square" in the quadratic case.
\begin{prop}\label{prop:goodbasis}
Every $R$-algebra of rank $3$ has a good basis.
\end{prop}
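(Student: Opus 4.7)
The plan is to produce a good basis by a simple affine shift of $i$ and $j$ by scalars, chosen so as to kill the linear part of $ij$. This is the rank $3$ analogue of the ``complete the square'' maneuver used in Lemma \ref{lem:completesquare}.

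Starting from an arbitrary basis $1, i, j$ with the multiplication table displayed above, I would try a change of basis of the form $i' = i - \alpha$ and $j' = j - \beta$ for some $\alpha, \beta \in R$ to be determined. Expanding and using $ij = d + ei + fj$ gives
\begin{align*}
i'j' = (i - \alpha)(j - \beta) &= ij - \beta i - \alpha j + \alpha\beta \\
&= d + \alpha\beta + (e - \beta)i + (f - \alpha)j.
\end{align*}
For $i'j'$ to lie in $R$, the coefficients of $i$ and $j$ must both vanish, which forces $\beta = e$ and $\alpha = f$. With these choices, $i'j' = d + ef \in R$.

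It remains only to verify that $1, i', j'$ is actually an $R$-basis of $A$. The transition from $(1, i, j)$ to $(1, i', j') = (1, i-f, j-e)$ is given (on $R^3$) by the lower-triangular matrix
\[
\begin{pmatrix} 1 & 0 & 0 \\ -f & 1 & 0 \\ -e & 0 & 1 \end{pmatrix},
\]
which has determinant $1$ and is therefore invertible over $R$. Hence $1, i', j'$ is a basis of $A$, and since $i'j' = d + ef \in R$, it is a good basis.

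There is really no serious obstacle here: once one writes down the generic candidate $i' = i - \alpha, j' = j - \beta$ and expands, the required values of $\alpha$ and $\beta$ are forced and lie in $R$ with no divisibility conditions, so the construction works uniformly over any domain $R$ (and in fact over any commutative ring with $1$). The only thing to be careful about is to keep the $1$ basis vector fixed, so that the resulting basis still contains $1$ — which is automatic in the shift above.
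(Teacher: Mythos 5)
Your proof is correct and follows essentially the same route as the paper: both perform the affine shift $i \mapsto i-f$, $j \mapsto j-e$ so that $ij$ becomes $d+ef \in R$. Your explicit check that the transition matrix is unipotent (hence invertible over any $R$) is a small welcome addition that the paper leaves implicit.
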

\begin{proof}
Following \cite{Grossand}, (which assumes that the algebra is commutative, though it is not necessary to do so) we apply the change of basis $1\mapsto 1, \ i \mapsto I = i-f, \ j\mapsto J = j-e$. Then $$IJ = (i-f)(j-e) = ij - ei - fj + ef = d+ef,$$ which is in the span of 1. Thus $A$ can be rewritten with the multiplication table:
\begin{equation}\label{goodbasis}
\begin{array}{ccl}
i^2 &  = & a + bi + cj \\
ij &  = & d \\
ji &  = & l + mi + nj \\
j^2 &  = & x + yi + zj \\
\end{array} 
\end{equation}with $a, b, c, d, l, m, n, x, y, z \in R$.\end{proof}

The requirement that $A$ be associative restricts the possible values of the coefficients in the above multiplication table.
For example, by associativity, we have the identity $(ii)j = i(ij)$. Applying the above multiplication table to the identity we obtain:
\begin{align*}
(ii)j &= i(ij)\\
(a + bi + cj)j& = i(d)\\
aj + bij + cj^2& =  di
\end{align*}
Subtracting $di$ from each side and applying the relation in table 
\ref{goodbasis} we get
$$
bd + cx+ (cy-d)i + (a + cz)j =  0,$$
and so $bd + cx  =  cy - d = a+cz = 0$. This allows the simplifications $d\mapsfrom cy$ and $a\mapsfrom -cz$.

The relations revealed by associativity are as follows:
\begin{enumerate}
\item From $(ii)j = i(ij)$ we have $bd + cx  =  d-cy = a+cz = 0$;
\item from $(ij)j = i(jj)$ we have $ay+dz = x+by = d-cy = 0$;
\item from $(ii)j = i(ij)$ we have $bd+cx = d-cy = a+cz = 0$;
\item from $(ij)i = i(ji)$ we have $am+dn = l+bm-d = cm = 0$;
\item from $(ji)j = j(ij)$ we have $dm+nx = ny = l-d+nz = 0$;
\item from $(ji)i = j(ii)$ we have $cx +bcy = bm-mn = bn-n^2 = 0$;
\item from $(jj)i = j(ji)$ we have $bm^2 - bmz - nx = x+by+mz-m^2-ny = mn-bm=0$.
\end{enumerate}
The relations in $1.$ allow the substitutions $d\mapsto cy$ and $a\mapsto -cz$. From $2.$ we may substitute $x\mapsto -by$. From $6.$ (and $d = cy$) we see that we may replace $l$ with $cy-nz$. Thus we have:
\begin{equation}
\begin{array}{ccl}
i^2 &  = & -cz + bi + cj \\
ij &  = & cy \\
ji &  = & (cy-nz) + mi + nj \\
j^2 &  = & -by + yi + zj \\
\end{array} 
\end{equation}with $ b, c,  m, n, y, z \in R$ satisfying the (incomplete) list of relations $cm = ny=0$, $bm = mn$, $bn = n^2$, and $mz = m^2$. 
Applying these relations to the identity $(ji)j = j(ij)$ we see:
\begin{align*}
-bny + nyi + (nz-bm)j = 0,
\end{align*}
and so $b, m, n, z$ must also satisfy $nz = bm$. We show two more relations that must be satisfied by the coefficients in the multiplication table.

\begin{lem}\label{lem:my} We have $my=0$.\end{lem}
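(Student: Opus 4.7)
The plan is to exploit the only ternary associativity identity that has not yet been used, namely $j(jj) = (jj)j$. All of the relations listed in items 1--7 come from associators of the form $(xy)z = x(yz)$ with $\{x,y,z\}$ a mixed triple in $\{i,j\}$; the identity $jjj$ has not been invoked, and this will be the source of $my=0$.

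Concretely, I would expand both sides of $j\cdot j^2 = j^2 \cdot j$ using the simplified multiplication table
\[
i^2 = -cz+bi+cj,\qquad ij=cy,\qquad ji = (cy-nz)+mi+nj,\qquad j^2 = -by+yi+zj.
\]
For the left side, $j\cdot j^2 = j(-by+yi+zj) = -byj + y(ji) + z(j^2)$, substitute in $ji$ and $j^2$, and simplify using the already-established identities (especially $ny=0$, which kills several cross terms, together with the known relations such as $nz=bm$ if they appear). For the right side, $j^2\cdot j = (-by+yi+zj)j = -byj + y(ij) + z(j^2)$, and substitute $ij=cy$ and $j^2$.

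When the two expansions are written in terms of the good basis $1,i,j$, the constant terms and the $j$-coefficients and the scalar multiples of $yz$ in the $i$-coefficient will agree automatically. The only discrepancy will be a single extra summand $my$ appearing in the coefficient of $i$ on the left-hand side, coming from $y\cdot(mi)$ inside $y(ji)$, which has no counterpart on the right-hand side (since $ij=cy$ is scalar, so $y(ij)$ contributes no $i$-term). Equating the $i$-coefficients and cancelling the common $yz$ thus forces $my\cdot i = 0$ in the free module $A$, and since $1,i,j$ is a basis we conclude $my=0$.

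The step that demands the most care is bookkeeping: the expansion of $y(ji)$ produces four summands, one of which is $ny\cdot j$, and using $ny=0$ at the right moment is essential to isolate $my$ as the sole surviving obstruction. No new associativity identity beyond $j(jj)=(jj)j$ is needed, and no clever trick beyond careful expansion; the freeness of $A$ as an $R$-module then gives the conclusion.
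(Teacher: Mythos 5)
Your proposal is correct, and it rests on exactly the same identity as the paper's proof: the associativity of $j^3$, i.e.\ $j(jj)=(jj)j$, which is indeed the one cubic associator not consumed by the listed relations. Your bookkeeping checks out: with $ij=cy$ scalar, the right-hand side $(jj)j$ contributes $yz$ to the coefficient of $i$, while the left-hand side $j(jj)$ contributes $my+yz$ via the $y\cdot mi$ term inside $y(ji)$; the terms $-nyz$ and $ny\,j$ die by $ny=0$, the remaining coefficients match, and freeness of $A$ then forces $my=0$. The only difference from the paper is one of execution: the paper argues by contradiction, assuming $my\neq 0$ and using that $R$ is a domain to force $c=n=b=0$ and $m=z$ before computing $j^3$ two ways in the reduced table, whereas you expand the associator directly in the general table. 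Your route is slightly cleaner in that it needs no reductio and does not invoke the domain hypothesis at this step --- only the $R$-freeness of the basis $1,i,j$ --- but mathematically the two arguments are the same.
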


\begin{proof} Suppose $my\not=0$, so $m, y\not=0$. Since $R$ is a domain, the relations $cm=ny=0$ imply $c=n=0$. Then $bm=0$, so $b=0$  and $m = z$. So the system reduces to
\begin{align}\label{reduced}
\begin{array}{ccl}
i^2 &  = & 0  \\
ij &  = & 0 \\
ji &  = & mi \\
j^2 &  = & yi + mj \\
\end{array}
\end{align}
Next we apply the reduced multiplication table \ref{reduced} to the product $j^3$:
\begin{align*}
j^3&= j(yi+mj) = yji + mj^2 = myi + mj^2
\end{align*}
Subtracting $mj^2$ from each side we get
\begin{align*}
myi &= j^3 - mj^2\\  &= j^2(j-m)\\ &= (yi+mj)(j-m)\\
					&= -myi + myi + m^2j - m^2j = 0\\
\end{align*}
Since $my\not=0$, this is a contradiction.\end{proof}

\begin{lem}\label{lem:cn} We have $cn=0$.  \end{lem}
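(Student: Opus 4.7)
The plan is to imitate the proof of Lemma~\ref{lem:my} by assuming $cn \neq 0$ for contradiction, collapsing the multiplication table, and then extracting an inconsistency from an associativity identity.

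First I would use the relations already in hand to reduce the multiplication table. Since $R$ is a domain, $cn \neq 0$ forces $c \neq 0$ and $n \neq 0$. From $cm = 0$ we then get $m = 0$, and from $ny = 0$ we get $y = 0$. The further relations $nz = bm = 0$ and $bn = n^2$ then yield $z = 0$ and $b = n$. Back-substitution gives $a = -cz = 0$, $d = cy = 0$, $l = cy - nz = 0$, and $x = -by = 0$, so the table collapses to
\begin{align*}
i^2 &= ni + cj, \\
ij &= 0, \\
ji &= nj, \\
j^2 &= 0.
\end{align*}

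Next I would extract the contradiction from power-associativity $(i^2)i = i(i^2)$. Compute
\begin{align*}
i^2 \cdot i &= (ni + cj)i = n i^2 + c(ji) = n(ni + cj) + c(nj) = n^2 i + 2cn\,j, \\
i \cdot i^2 &= i(ni + cj) = n i^2 + c(ij) = n(ni + cj) + 0 = n^2 i + cn\,j.
\end{align*}
Subtracting, $0 = i^2 \cdot i - i \cdot i^2 = cn\, j$, and since $1, i, j$ is a basis we obtain $cn = 0$, contradicting the assumption.

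The main obstacle is really just the bookkeeping of the collapse — making sure every relation derived so far is invoked correctly so that the reduced table genuinely has the simple form above. Once that reduction is achieved, the associativity of $i \cdot i \cdot i$ delivers the contradiction essentially for free, exactly in parallel with how Lemma~\ref{lem:my} used $j^3$ to rule out $my \neq 0$.
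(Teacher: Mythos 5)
Your proof is correct and follows essentially the same route as the paper: reduce to the case $m = y = 0$ with $n \neq 0$, $z = 0$, $b = n$, collapse the table to $i^2 = ni + cj$, $ij = 0$, $ji = nj$, $j^2 = 0$, and extract $cnj = 0$ from the associativity of $i^3$. The paper merely states that "a computation on the identity $i^3 = (ni+cj)i$ similar to that in Lemma 5.3 shows that $cnj=0$," and your explicit calculation of $(i^2)i - i(i^2) = cn\,j$ supplies exactly that omitted computation correctly.
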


\begin{proof} The relations $cm=ny=0$ require $cn=0$ except in the case that $m=y=0$. In that case, the relations $nz = 0$ and $n^2 = bn$ require that $n = 0$ or $b = n$. In the case $n$ is not zero the system reduces to
$$
\begin{array}{ccl}
i^2 &  = & ni + cj \\
ij &  = & 0 \\
ji &  = &  nj \\
j^2 &  = & 0 \\
\end{array}
$$ A computation on the identity $i^3 = (ni + cj)i$ similar to that in Lemma 5.3 shows that $cnj = 0$, from which we conclude that $cn = 0$. \end{proof}

Therefore, $A$ has multiplication defined by
\begin{equation}\label{eqn:mtable}
\begin{array}{ccl}
i^2 &  = & -cz + bi + cj \\
ij &  = & cy \\
ji &  = & (cy-bm) + mi + nj \\
j^2 &  = & -by + yi + zj \\
\end{array} 
\end{equation}\label{eqn:relations}
where $b, c, m, n, y, z$ satisfy
\begin{equation}
\begin{bmatrix}m\\ n\end{bmatrix}
{\begin{bmatrix} c& y& b-n& m-z  \end{bmatrix}}= 
\begin{bmatrix} cm& my& m(b-n)& m(m-z)\\
cn& ny& n(b-n)& n(m-z) \end{bmatrix}= 0;
\end{equation}
that is $cm=cn=ny=my=0$, $bm=mn=nz$, $n^2 = bn$ and $m^2 = mz$.

 %%%%%%%%%%%%%%%%%%%%%insert revised theorem
\begin{thm}\label{thm:associative}
The multiplication table {\rm \ref{eqn:mtable}} with relations \ref{eqn:relations}$.10$ determines an associative algebra. The quotient of the free $R$-algebra on $1$, $i$, and $j$ and the ideal generated by the relations in {\rm \ref{eqn:mtable}} and {\rm \ref{eqn:relations}}$.10$ is an $R$-algebra of rank $3.$
\end{thm}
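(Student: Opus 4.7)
The plan is to exhibit the algebra concretely and reduce everything to checking associativity. Let $M = R \oplus Ri \oplus Rj$ be the free $R$-module of rank $3$ on the symbols $1, i, j$, and extend the products in \ref{eqn:mtable} to an $R$-bilinear multiplication on $M$ with $1$ acting as a two-sided identity. Once associativity of this multiplication is established, $M$ becomes an associative $R$-algebra whose generators $1, i, j$ satisfy all the defining relations. There is then an $R$-algebra surjection $\pi$ from the quotient $Q$ in the theorem onto $M$; since $Q$ is spanned over $R$ by $\{1, i, j\}$ (every longer monomial reduces via \ref{eqn:mtable}) and $\pi$ sends these to an $R$-basis of $M$, it is an isomorphism, which yields the rank $3$ claim.

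The remaining and only substantive task is to verify associativity on $M$. By $R$-bilinearity and the identity axiom, it suffices to check $(xy)z = x(yz)$ for the $8$ triples $(x, y, z) \in \{i, j\}^3$. For each triple, expanding both sides via \ref{eqn:mtable} produces an $R$-linear combination $\alpha + \beta i + \gamma j$, and the identity reduces to the vanishing of three coefficient-polynomials in $b, c, m, n, y, z$. For the $6$ mixed triples $iij$, $iji$, $ijj$, $jii$, $jij$, $jji$, this is precisely the analysis carried out in items $1$--$7$ preceding the theorem, together with Lemmas \ref{lem:my} and \ref{lem:cn}: each derivation there is in fact an equivalence, so once the relations of \ref{eqn:relations} are imposed, all six identities hold. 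The two pure-cube triples $iii$ and $jjj$ are not among those items and require a short separate computation, but expanding shows that their coefficient conditions are multiples of $cm$, $cn$, $my$, $ny$ (with some coefficients from the other relations), and so vanish under \ref{eqn:relations}.

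The main obstacle is not conceptual but organizational: one must carry out all $8$ expansions carefully and confirm, in each case, that the resulting coefficients lie in the ideal generated by \ref{eqn:relations}. Because the derivation preceding the theorem already performed six of these expansions in the course of extracting the relations, the new work is modest---essentially the two pure-cube checks and the bookkeeping needed to confirm that each implication in the derivation reverses to an equivalence. Once associativity is secured, the module-theoretic identification $Q \cong M$ from the first paragraph delivers both assertions of the theorem simultaneously.
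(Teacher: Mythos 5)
Your proposal is correct, but it organizes both halves of the argument differently from the paper, and the comparison is worth making. For associativity, the paper expands $(x_1x_2)x_3$ and $x_1(x_2x_3)$ for three completely general elements of $M$ and compares the results term by term in Appendix A, splitting into the cases $m=n=0$ and $c=y=0$ of Lemma \ref{lem:cases}; your reduction by bilinearity to the eight triples in $\{i,j\}^3$ reaches the same conclusion far more economically, since the six mixed triples really are equivalences between the vanishing of $(xy)z - x(yz)$ in the free module and the coefficient conditions recorded in items 1--7, all of which follow from \ref{eqn:relations}.10 after the substitutions $a=-cz$, $d=cy$, $x=-by$, $l=cy-bm$, and the two pure cubes reduce to $(ii)i - i(ii) = c(ji-ij)$ and $(jj)j - j(jj) = y(ij-ji)$, which vanish by $cm=cn=0$ and $my=ny=0$ exactly as you claim. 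For the rank-$3$ statement, the paper does not map the quotient onto $M$ itself but onto the subalgebra $B\subseteq M_3(R)$ generated by the left-regular-representation matrices $1, I, J$; the point of that device is that associativity of $B$ is automatic inside a matrix ring, so the rank claim needs only the four identities $I^2=-cz+bI+cJ$, $IJ=cy$, etc., and is logically independent of the associativity verification. Your route instead uses the already-established associativity of $M$ to invoke the universal property of $R\langle i,j\rangle$ and obtain the surjection onto $M$, after which the argument (a spanning set of the quotient maps to a basis, hence the map is injective) is formally the same as the paper's. Both are complete; the paper's matrix model is the more robust pattern, yours the leaner one.
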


\begin{proof}
Let $M$ be the rank 3 $R$-module with basis $1, i, j$ written $R\cdot 1 + Ri + Rj$. Define multiplication $M\times M\to M$ as in the table. Let $x_1$, $x_2$, and $x_3$ be elements of $M$. By comparing $(x_1x_2)x_3$ and $x_1(x_2x_3),$ we verify (in appendix A) that multiplication is associative, thus proving the first assertion.

Let $F = R\langle i, j \rangle$, be the free associative $R$-algebra in $i, j$. 
Let $A$ be the quotient of $F$ by the relations in the multiplication table \ref{eqn:mtable}  with coefficients $b, c, m, n, y, z$ that satisfy \ref{eqn:relations}$.10$:
$$
A = \frac{R\langle i, j \rangle}{\left(\begin{array}{l} i^2 + cz - bi - cj,\\ ij-cy,\\ ji-(cy-bm)-mi-nj,\\ j^2+by - yi - zj \end{array} \right)}.
$$
Let $T$ stand for the ideal generated by the relations in the table, so that $A = F/T$.
The relations in the table show that every element of $A$ may be expressed as an $R$-linear combination of $1, i$, and $j$, so $A$ is generated by $1, i, j$ as an $R$-module, and the rank of $A$ is less than or equal to 3. 

Next we construct a cubic algebra $B$. Consider the following three elements of $M_3(R)$:
$$
1 = 
\begin{bmatrix}
1& 0 & 0 \\
0& 1& 0\\
0& 0& 1
\end{bmatrix}
, \ I = 
\begin{bmatrix}
0& -cz & cy \\
1& b& 0\\
0& c& 0
\end{bmatrix}
 ,  \text{ and } J =  
\begin{bmatrix}
0& cy-bm &-by\\
0& m& y\\
1 & n& z
\end{bmatrix}.
$$
To obtain these matrices assume (for a moment) the independence of $1, i, j$. In that case we may identify $p + qi + rj\in A$ with the column vector $(p, q, t)^T\in R^3$. Then the columns of $I$ and $J$ record the action of left-multiplication by $i$ and $j$ respectively on $1, i, j$, as given in the multiplication table. However, the existence of these matrices does not depend on properties of $i, j$. Now recall that we have yet to show that that $1, i,$ and $j$ are $R$-independent.

Let $B$ be the sub $R$-algebra of $M_3(R)$ generated by $1, I, J$. By comparing the first columns of $1, I$, and $J$ we see that $1, I$, and $J$ are $R$-independent, so $B$ is free of rank 3 or greater as an $R$-algebra.

Consider the surjective $R$-linear map from $F\to B$ given by $1\mapsto 1$, $i\mapsto I$, and $j\mapsto J$. By the universal property of free algebras, this map extends to a homomorphism $\varphi:F\to B$.

We will show that $\varphi$ factors as follows:
\begin{center}
\begin{tikzpicture}
\node at (-2, 2) {$F$};
\node at (2, 2) {$B$};
\node at (0, .25) {$A$};
\node at (0, 2.5) {$\varphi$};
\node at (1.25, .75) {$\iota$};
\node at (-1.25, .75) {$\pi$};
\draw [->] (-1.5, 2) -- (1.5, 2);
\draw [->] (-1.5, 1.5) -- (-.5, .5);
\draw [->] (.5, .5) -- (1.5, 1.5);
\end{tikzpicture}
\end{center}
where $\pi:F\to A$ is the quotient map sending $x$ to the coset $x+T$, and $\iota : A\to B$ is the map $x+T\mapsto \varphi(x)$. 

It is necessary and sufficient to show that the composition $\iota\circ\pi$ is independent of representative of the coset $x + T$; that is if $x + T = x^\prime + T$, then $\varphi(x) = \varphi(x^\prime)$, which is true if and only if $T\subset \ker \varphi$. To check that $T\subset \ker \varphi$, we verify the following identities by direct computation:
\begin{align*}
I^2 &=  -cz + bI + cJ \\
IJ &=  cy \\
JI &=  (cy-bm) + mI + nJ \\
J^2 &=  -by + yI + zJ
\end{align*}
First we compute the product $I^2$:
$$I^2 = 
\begin{bmatrix}
0& -cz & cy \\
1& b& 0\\
0& c& 0
\end{bmatrix}^2 = 
\begin{bmatrix}
-cz& -cbz+c^2y & 0 \\
b& -cz+b^2& cy\\
c& bc& 0
\end{bmatrix}
$$

We compare $I^2$ to the corresponding expression in the table:
\begin{align*}
-cz + bI + cJ&= -cz \begin{bmatrix} 1& 0 & 0\\ 0& 1& 0\\ 0& 0& 1\end{bmatrix} + b \begin{bmatrix}
0& -cz & cy \\
1& b& 0\\
0& c& 0
\end{bmatrix} + c\begin{bmatrix}
0& cy-bm &-by\\
0& m& y\\
1 & n& z
\end{bmatrix}\\
&= \begin{bmatrix}
-cz& -bcz + c^2y - bcm& bcy-bcy\\
b& -cz+b^2 + cm & cy\\
c & bc + cn & -cz+cz
\end{bmatrix}
\end{align*}
Applying the relations $cm=cn=0$ and simplifying we observe that indeed
$ I^2 = -cz + bI + cJ .$ 

Next we verify that $IJ = cy$. We compute the product
\begin{align*}
IJ &=
 \begin{bmatrix}
0& -cz & cy \\
1& b& 0\\
0& c& 0
\end{bmatrix}
\begin{bmatrix}
0& cy-bm &-by\\
0& m& y\\
1 & n& z
\end{bmatrix}
	= \begin{bmatrix}
cy & -cmz & cny\\
0 & cy - bm+bm & -by + by\\
0& cm & cy
\end{bmatrix}.
\end{align*}
Simplifying and applying the relations $cm=ny=0$, we obtain
$$
\begin{bmatrix}
cy& 0 & 0 \\
0 & cy& 0 \\
0 & 0 & cy
\end{bmatrix}
 = 
cy 
\begin{bmatrix}
1& 0 & 0 \\
0& 1& 0\\
0& 0& 1
\end{bmatrix}
$$
Thus $IJ = cy$.

Similar computations demonstrate that $\varphi$ respects the relations $ji = cy-bm+mi+nj$ and $j^2 = -by+yi+zj$. Therefore, $T\subset \ker \varphi$. 

Since $\varphi$ is surjective and $\varphi$ factors as $\iota\circ\pi$, we have that $\iota:A\to B$ is a surjective algebra homomorphism $A\to B$. Suppose $\iota(p+qi+rj+ T) = \varphi(p+qi+rj) = 0$. Then $p + qI + rJ = 0$. By the independence of $1, I, J$, we have $p = q = r = 0$. Thus $p+qi+rj+ T = 0+T$, which shows that $\iota$ is injective, and hence bijective. Then $\iota$ is an algebra isomorphism $A\to B$, and $A$ is a cubic algebra.
\end{proof}

\begin{lem}\label{lem:cases} The relations {\rm ({\ref{eqn:relations}.10)}} on the coefficients of the multiplication table of $A$ {\rm(\ref{eqn:mtable})} require, in particular, one of the two following cases:
\begin{description}
\item[(C)] $m = n = 0$, in which $A$ is commutative, or 
\item[(E)]  $m, n$ are not both $0$, and $c = y = 0$, in which $A$ is noncommutative.
\end{description}
\end{lem}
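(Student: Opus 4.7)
The plan is to read off the two cases directly from the relations in (5.10), using that $R$ is a domain. First I would dispose of the commutative case: if $m=n=0$, then plugging into the multiplication table (5.9) gives $ij = cy$ and $ji = (cy-bm)+mi+nj = cy$, so $ij=ji$; since $A$ is generated over $R$ by $i$ and $j$ (and $R$ lies in the center), this forces $A$ to be commutative.

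For the main dichotomy, I would suppose $(m,n)\neq(0,0)$ and deduce $c=y=0$. The relations in (5.10) include $cm=cn=my=ny=0$, so if $m\neq 0$ then $cm=0$ and $my=0$ together with $R$ being a domain give $c=y=0$; symmetrically, if $n\neq 0$ then $cn=0$ and $ny=0$ give $c=y=0$. Either way, $c=y=0$, which is exactly case (E). Finally, to confirm noncommutativity in case (E), I would note that with $c=y=0$ the table reduces to $ij=0$ while $ji = -bm + mi + nj$; since $m$ and $n$ are not both zero, the element $ji$ has a nonzero $i$- or $j$-component in the basis $1,i,j$, so $ji\neq 0 = ij$.

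There is no real obstacle here: the only subtlety is making sure that ``not both zero'' is enough to force $c=y=0$ (one nonzero coefficient among $m,n$ suffices, thanks to the symmetric pair of annihilation relations $cm=cn=0$ and $my=ny=0$), and that the lemma only claims these two cases are required ``in particular,'' so I do not need to rule out further constraints from the remaining relations $bm=mn=nz$, $n^2=bn$, $m^2=mz$ — those simply cut down the parameter space within each case but do not introduce a third case.
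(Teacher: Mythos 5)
Your proof is correct and follows essentially the same route as the paper: both arguments extract the dichotomy from the relations $cm=cn=my=ny=0$ using that $R$ is a domain, with yours organized by whether $(m,n)=(0,0)$ rather than the paper's case split on $cn=0$. Your inline verification of commutativity in case (C) and noncommutativity in case (E) is also sound and matches what the paper observes in the discussion surrounding the lemma.
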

\begin{proof}To satisfy $cn=0$, first let $c = 0$. Then by $my=ny=0$, we have cases $c = m = n = 0$ or $c = y = 0$. Next satisfy $cn=0$ by taking $n = 0$. So by $cm = my = 0$ we have either $m = n = 0$ or $c = n = y = 0$.
\end{proof}

In case (C) we have the following multiplication table for $A$
\begin{equation}\label{eqn:commutative}
\begin{array}{ccl}
i^2 & = & -cz+bi + cj\\
ij & = & cy\\
ji& = & cy\\
j^2&= &-by + yi + zj
\end{array}
\end{equation}
where $b, c, y, z$ are arbitrary elements of $R$. Observe that 
  $ij = ji$, and so $A$ is commutative. These algebras are examined in detail in Gross and Lucianovic \cite{Grossand}. For completeness we include this result from their paper.

\begin{definition}\label{def:binaryform}
A \emph{binary cubic form} is an expression $p(x, y)\in R[x, y]$ such that $p(x, y)$ is homogeneous of degree $3$ in $2$ variables:
$$
p(x, y) = ax^3 + bx^2y + cxy^2 + dy^3.
$$
The \emph{discriminant} $\Delta(p)$ of $p$ is given by
$$
\Delta(p) = b^2c^2 + 18abcd - 4ac^3-4db^3-27a^2d^2 \in R/R^{\times}
$$
\end{definition}
The set $M$ of binary cubic forms over $R$ is a free $R$-module of rank 4, denoted Sym$^3(R^2)$. We define the (twisted) action of $g =  \begin{bmatrix}\alpha&\beta\\ \gamma&\delta \end{bmatrix}\in \rm{GL}_2(R)$ on $M$ by
$$
p\mapsto g\cdot p = \frac{1}{\det g}p(\alpha i + \gamma j, \beta i + \delta j)
$$
This action is faithful, and $\Delta(g\cdot p) = (\det g)^2\Delta(p)$.

\begin{prop}[Gross and Lucianovic]
There is a discriminant-preserving bijection between the set of orbits of the twisted action of GL($R$) on the $R$-module {\rm Sym}$^3(R^2)$ and the set of isomorphism classes of commutative cubic rings $A$ over $R$.
\end{prop}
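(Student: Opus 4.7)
The plan is to construct the bijection by extracting a binary cubic form directly from the multiplication table of a good basis, and then to show that changes of good basis exactly implement the twisted $\mathrm{GL}_2(R)$ action. First, given a commutative cubic ring $A$ with a chosen good basis $1, i, j$ and multiplication coefficients $(b, c, y, z)$ as in table~(\ref{eqn:commutative}), I would define a binary cubic form $p_A \in \mathrm{Sym}^3(R^2)$ by reading off the structure constants of $i^2$ and $j^2$; up to sign conventions, one natural choice is $p_A(x_1, x_2) = c\, x_1^3 + b\, x_1^2 x_2 - z\, x_1 x_2^2 + y\, x_2^3$. By case~(C) of Lemma~\ref{lem:cases}, the parameters $(b, c, y, z)$ are entirely unconstrained in $R^4$, so this assignment is a bijection between good-basis commutative cubic rings and elements of $\mathrm{Sym}^3(R^2)$. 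A direct computation then compares $\Delta(p_A)$ with a natural discriminant invariant of $A$ (for instance, the determinant of the trace pairing on $A/R$) and shows the two agree.

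Next I would analyze the ambiguity in the choice of good basis. Any other good basis of $A$ arises from a change of the form $I = \alpha i + \beta j + r$, $J = \gamma i + \delta j + s$, where the matrix $g$ with entries $\alpha, \beta, \gamma, \delta$ lies in $\mathrm{GL}_2(R)$ and $r, s \in R$ are uniquely determined by the requirement that $IJ \in R$, exactly as in the construction of Proposition~\ref{prop:goodbasis}. A direct calculation then expresses the new structure constants $(B, C, Y, Z)$ as polynomials in $(b, c, y, z)$ and in the entries of $g$. Comparing these with the expansion
\[ g \cdot p_A \;=\; \frac{1}{\det g}\, p_A(\alpha x_1 + \gamma x_2,\ \beta x_1 + \delta x_2) \]
shows that $A \mapsto p_A$ descends to a well-defined injective map from isomorphism classes of commutative cubic rings to $\mathrm{GL}_2(R)$-orbits on $\mathrm{Sym}^3(R^2)$. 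The identity $\Delta(g \cdot p) = (\det g)^2 \Delta(p)$ then guarantees that the correspondence is discriminant-preserving on orbits.

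Conversely, given any form $p \in \mathrm{Sym}^3(R^2)$, Theorem~\ref{thm:associative} together with case~(C) of Lemma~\ref{lem:cases} guarantees that the multiplication table built from the coefficients of $p$ automatically defines an associative commutative cubic ring, since in case (C) no relations are imposed on $(b, c, y, z)$. Applying the forward map to this ring recovers $p$, so the map is surjective on orbits, completing the bijection.

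The main obstacle is the bookkeeping in the second paragraph: verifying explicitly that the change-of-basis rule on $(b, c, y, z)$ matches the twisted $\mathrm{GL}_2(R)$ action on forms, including the $(\det g)^{-1}$ normalization. This is a direct but delicate calculation, made tractable by the fact that the translation parameters $r, s$ are forced by the good-basis condition and therefore contribute only predictable polynomial expressions in $\alpha, \beta, \gamma, \delta$.
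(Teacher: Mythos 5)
The paper itself does not prove this proposition --- it is quoted from Gross and Lucianovic \cite{Grossand}, with only the notational dictionary of Remark \ref{rem:grossletters} supplied --- so there is no in-text argument to compare against. Your sketch reconstructs the standard argument from that source, and its architecture (read the form off a good basis, check that good-basis changes realize exactly the twisted $\mathrm{GL}_2(R)$ action, use Theorem \ref{thm:associative} with case (C) of Lemma \ref{lem:cases} for surjectivity, and the invertibility of the $2\times 2$ system with determinant $\pm\det g$ to pin down the translations $r,s$) is sound. One concrete correction: your candidate form has the wrong sign on the leading coefficient. Tracing Remark \ref{rem:grossletters} through table (\ref{eqn:commutative}), the equivariant choice is $-c\,x_1^3 + b\,x_1^2x_2 - z\,x_1x_2^2 + y\,x_2^3$ (or its negative, which has the same discriminant since $\Delta$ is homogeneous of degree $4$); with $+c$ on $x_1^3$ alone, the change-of-basis computation in your second paragraph will not close up, because negating a single coefficient is not realized by any element of $\mathrm{GL}_2(R)$ acting on $\mathrm{Sym}^3(R^2)$. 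The clean way to get the sign right, and to make the $(\det g)^{-1}$ twist transparent rather than a bookkeeping miracle, is to define the form intrinsically as the cubic map $A/R \to \wedge^2(A/R) \cong R$ given by $x \mapsto x\wedge x^2$: computing $x^2 \bmod R$ for $x = x_1 i + x_2 j$ from table (\ref{eqn:commutative}) produces the form in one line, and a basis change by $g$ rescales $\wedge^2(A/R)$ by $\det g$, which is precisely the twist. Finally, the trace pairing does not descend to $A/R$, so for the discriminant comparison you should use $\det\bigl(\mathrm{tr}(e_ie_j)\bigr)$ over a basis $e_1,e_2,e_3$ of $A$ itself.
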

\begin{rem}\label{rem:grossletters}
In their paper {\rm \cite{Grossand}} Gross and Lucianovic use different characters in their multiplication table for commutative cubic algebras. The notation correspondence from this paper to theirs is $z\mapsto -c$, $x\mapsto -a$, $b\mapsto b$, and $y\mapsto d$.
\end{rem}

Next consider case (E), in which $c = y = 0$ and $m, n$ are not both zero. If $m\not=0$, from $bm=mn$ we obtain $b=n$, and from $m^2 = mz$ we see that $m=z$. Taking $n\not=0$ produces identical results. Thus in case (E) the multiplication table for $A$ is
\begin{equation}\label{eqn:exceptional}
\begin{array}{ccl}
i^2 & = & ni\\
ij&=&0\\
ji&=&-mn + mi + nj\\
j^2&=&mj
\end{array}
\end{equation}
where $m, n$ are arbitrary elements of $R$ (with $m, n$ not both 0). Comparing $ij$ and $ji$ we see that $A$ is not commutative in this case.

 \begin{prop}\label{prop:exceptional} In case {\rm (E)}, algebra $A$ is an exceptional ring.\end{prop}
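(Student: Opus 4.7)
The plan is to exhibit, via a change of basis, a decomposition $A = R \oplus M$ in which $M$ is a left ideal and left multiplication by any $x \in M$ acts on $M$ as scalar multiplication by some $t(x) \in R$. The issue with using $M_0 = Ri + Rj$ directly is that the product $ji = -mn + mi + nj$ has a nonzero scalar part (whenever $mn \neq 0$), so $M_0$ is not in general a left ideal. I will correct for this by shifting $i$ by $n$.

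Concretely, I set $\tilde{i} = i - n$ and $\tilde{j} = j$, so that $\{1, \tilde{i}, \tilde{j}\}$ is still an $R$-basis of $A$ and I may define $M = R\tilde{i} + R\tilde{j}$, giving $A = R \oplus M$ as $R$-modules. Using the multiplication table \ref{eqn:exceptional} (with $i^2 = ni$, $ij = 0$, $ji = -mn + mi + nj$, $j^2 = mj$), I will compute the four products on the new basis; a short calculation gives
\begin{align*}
\tilde{i}^{\,2} &= i^2 - 2ni + n^2 = -ni + n^2 = -n\tilde{i}, &
\tilde{i}\tilde{j} &= ij - nj = -n\tilde{j}, \\
\tilde{j}\tilde{i} &= ji - nj = -mn + mi = m\tilde{i}, &
\tilde{j}^{\,2} &= j^2 = m\tilde{j}.
\end{align*}
In particular, every product of two basis elements of $M$ lies in $M$, so $M \cdot M \subset M$; since $R \cdot M \subset M$ as well, $M$ is a left ideal of $A$.

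Next I define $t : M \to R$ to be the $R$-linear functional with $t(\tilde{i}) = -n$ and $t(\tilde{j}) = m$. Writing $x = \alpha \tilde{i} + \beta\tilde{j}$ and $y = \gamma\tilde{i} + \delta\tilde{j}$, expanding $xy$ using the four products above and factoring gives
\[
xy = (-n\alpha + m\beta)(\gamma\tilde{i} + \delta\tilde{j}) = t(x)\, y,
\]
which is the required factoring-through-$t$ condition. This verifies the three conditions in Definition \ref{def:exceptional}, so $A$ is an exceptional ring.

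The only nonroutine step is guessing the correct shift; the motivation is that the obstruction to $Ri + Rj$ being a left ideal is precisely the scalar $-mn$ appearing in $ji$, and translating $i$ by $n$ kills this scalar while preserving the scalar-free products $ij$ and $i^2$. Everything after that is direct computation.
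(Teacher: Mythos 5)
Your proof is correct and is essentially the paper's own argument: the paper sets $I = n - i$ (the negative of your $\tilde{i}$), takes $M = RI + Rj$, and reads off the same functional $t$ (with $t(I)=n$, $t(j)=m$, which agrees with your $t(\tilde{i})=-n$, $t(\tilde{j})=m$ after the sign flip on the basis element). The only cosmetic difference is that the paper defines $M$ as the left ideal generated by $I$ and $j$ and then identifies it with the rank-2 module, whereas you define the module first and verify it is a left ideal.
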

 \begin{proof}
Let $I = n-i$. Note that $iI = i^2 - ni = 0$ and $jI = nj-ji =  mI$. Let $M$ be the left ideal $(I, j)\subset A$. We check that $A = R\oplus M$:
\begin{align*}
M = AI + Aj &= (R + Ri + Rj)(n-i) + (R + Ri + Rj)j\\
	&= R(n-i) + Rm(n-i) + Rj + Rmj\\
	&= RI + Rj.
\end{align*} 
Then $A = R + RI + Rj.$ By independence of $1, I, J$ we have $A=R\oplus M$.

Consider left-multiplication in $M$:
\begin{align*}
I^2 &= n^2 - ni - ni + i^2 = n^2 - ni = nI\\
Ij  &= nj-ij = nj\\
jI&= nj - ji = nj + mn - mi - nj = mI\\
j^2&= mj 
\end{align*}
 Thus left-multiplication in $M$ factors through the $R$-linear map $t(I) = n$, $t(j) = m$, which shows that $A$ is an exceptional ring.
\end{proof}

It follows from Proposition \ref{prop:exceptionalinvolution} that in case  {\rm(E)}, $A$ possesses a standard involution.

\begin{prop}\label{prop:exceptionalstandard}
In case {\rm(E)}, the algebra $A$ possesses the following standard involution:
$
\overline{1}= 1, \
\overline{i}= n-i, \
\overline{j}= m-j,
$
extended linearly.
\end{prop}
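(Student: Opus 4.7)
The plan is to deduce the result from the general construction in Proposition \ref{prop:exceptionalinvolution}, applied to the exceptional-ring structure already identified in Proposition \ref{prop:exceptional}, rather than verifying the involution axioms by direct computation on the basis.

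Recall that in case (E), Proposition \ref{prop:exceptional} gives the decomposition $A = R \oplus M$ with $M = RI + Rj$, where $I = n - i$, and shows that left multiplication on $M$ factors through the $R$-linear functional $t : M \to R$ determined by $t(I) = n$ and $t(j) = m$. Proposition \ref{prop:exceptionalinvolution} then guarantees that the $R$-linear extension of $x \mapsto t(x) - x$ on $M$, together with $\overline{1} = 1$, is a standard involution on all of $A$. This is the \emph{only} involution I need to consider; the task reduces to expressing it in terms of the basis $1, i, j$.

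Since $j \in M$ directly, we get $\overline{j} = t(j) - j = m - j$. Since $I \in M$, we get $\overline{I} = t(I) - I = n - (n - i) = i$. Because $i$ itself does \emph{not} lie in $M$, I must recover $\overline{i}$ from $R$-linearity using the decomposition $i = n \cdot 1 - I$:
\[ \overline{i} \;=\; n \cdot \overline{1} - \overline{I} \;=\; n - i. \]
This, combined with $\overline{1} = 1$ and $\overline{j} = m - j$, matches exactly the formulas in the statement.

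The only subtle point is the bookkeeping in the last step: one might be tempted to read off $\overline{i}$ by applying the formula $t(\,\cdot\,) - (\,\cdot\,)$ to $i$ itself, which is incorrect because that formula is defined only on $M$. Apart from this observation, there is no real obstacle; the proof is essentially a translation between the intrinsic description of the involution provided by the exceptional structure and the explicit description in the chosen good basis.
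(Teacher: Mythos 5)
Your proof is correct, and it takes a genuinely different route from the paper. The paper verifies the involution axioms by direct computation on the basis: it checks $\overline{\overline{i}}=i$, $\overline{i^2}=\overline{i}^2$, $\overline{ij}=\overline{j}\,\overline{i}$, $\overline{ji}=\overline{i}\,\overline{j}$, and finally that $(a+bi+cj)(\overline{a+bi+cj}) = a(a+bn+cm)+bcmn\in R$ for a general element. You instead transport the abstract standard involution guaranteed by Proposition \ref{prop:exceptionalinvolution} through the exceptional-ring structure $A = R\oplus M$, $M=RI+Rj$, $I=n-i$, $t(I)=n$, $t(j)=m$ established in Proposition \ref{prop:exceptional}, and your bookkeeping is right: $\overline{j}=t(j)-j=m-j$ directly, while $\overline{i}$ must be obtained by linearity from $i=n\cdot 1-I$ (giving $\overline{i}=n-\overline{I}=n-i$) since $i\notin M$ — the pitfall you flag is real and you avoid it. Your argument is shorter and makes clear that the explicit formula is forced by the exceptional structure rather than being a lucky guess; the paper in fact remarks just before this proposition that the existence of a standard involution already follows from Proposition \ref{prop:exceptionalinvolution}, so its direct verification is partly redundant. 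What the paper's computation buys in exchange is independence from the earlier propositions and, as a byproduct, the explicit norm form $a(a+bn+cm)+bcmn$, which your derivation does not produce.
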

\begin{proof}
First we check that $^-$ is an involution. Observe that $^-$ fixes $R$.  Since $\overline{\overline{i}} = \overline{n-i} = n - (n-i) = i$ (similarly, $\overline{\overline{j}} = j$), $^-$  is self-inverse.  Since $^-$ is $R$-linear, to check that $\overline{xy} = \overline{y}\, \overline{x}$ for all $x, y\in A$, it suffices to check that $\overline{i^2} = \overline{i}^2$, $\overline{j^2} = \overline{j}^2$,  $\overline{ij} = \overline{j}\, \overline{i}$ and $\overline{ji} = \overline{i}\, \overline{j}$. First
\begin{align*}
\overline{i^2} = \overline{ni} = n^2 - ni = n^2 - ni - ni + ni = (n-i)^2 = \overline{i}^2.
\end{align*}
By symmetry, $\overline{j^2} = \overline{j}^2$.

Next we check
\begin{align*}
\overline{ij} &= \overline{0} = 0 = mn - mi - nj + ji = (m-j)(n-i) = \overline{j}\, \overline{i},
\end{align*}
and also
\begin{align*}
\overline{ji} &= \overline{-mn + mi + nj} = -mn + m(n-i) + n(m-j) = mn -mi  - nj\\
		&= (n-i)(m-j) = \overline{i}\, \overline{j}.
\end{align*}
Thus $^-$ is an involution. To check that $^-$ is standard, take $a + bi + cj\in A$. Then
\begin{align*}
(a+bi + cj)(\overline{a+bi + cj})&= (a+bi + cj)(a + b(n-i) + c(m-j))\\
	&=  a(a + bn + cm) - abi - acj \\
	&\ \ \ + abi + b^2ni + bcmi - b^2i^2 - bcij\\
	&\ \ \ + acj + bcnj + c^2mj - bcji - c^2j^2\\
	&= a(a + bn + cm) +bcmi + bcnj  - bcji\\
	&= a(a + bn + cm) +bcmi + bcnj  - bc(-mn + mi + nj)\\
	&= a(a + bn + cm)  + bcmn\in R \\
\end{align*}\end{proof}

In Proposition \ref{prop:exceptionalstandard} we have shown that a noncommutative cubic algebra possesses a standard involution.
\begin{lem}\label{lem:nilproduct}
Suppose $A$ possesses a standard involution and is commutative. Then $A$ has multiplication table $i^2= ij = ji = j^2 = 0$ ($A$ is the nilproduct ring).
\end{lem}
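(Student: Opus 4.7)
The plan is to exploit the key consequence of a standard involution noted in Remark \ref{rem:invpoly}: every element $x \in A$ satisfies the integral quadratic relation
\[
x^2 - t(x) x + n(x) = 0,
\]
where $t(x) = x + \overline{x} \in R$ is $R$-linear (Lemma \ref{lem:xplusxbar}) and $n(x) = x\overline{x} \in R$. I would feed this relation in turn into the elements $i$, $j$, and $i+j$, using the commutative multiplication table \eqref{eqn:commutative}, and then read off the coefficients in the $R$-basis $1, i, j$. Because the basis is $R$-independent, each coefficient must vanish, and this will pin down $b, c, y, z$.

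First, substituting $i^2 = -cz + bi + cj$ into $i^2 - t(i) i + n(i) = 0$ and matching the $j$-, $i$-, and constant coefficients yields immediately $c = 0$, $t(i) = b$, and $n(i) = 0$. Symmetrically, applying the quadratic relation to $j$ with $j^2 = -by + yi + zj$ gives $y = 0$, $t(j) = z$, and $n(j) = 0$. At this stage the multiplication table has already collapsed to $ij = ji = 0$, $i^2 = bi$, $j^2 = zj$, so the remaining task is to force $b = z = 0$.

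For that I would apply the relation to the ``cross'' element $x = i+j$; this is the only place where a small trick is needed, as otherwise the one-variable constraints are not coupled. With the reduced table, $(i+j)^2 = bi + zj$, so comparing coefficients in $(i+j)^2 - t(i+j)(i+j) + n(i+j) = 0$ gives both $t(i+j) = b$ (from the $i$-coefficient) and $t(i+j) = z$ (from the $j$-coefficient). On the other hand, the $R$-linearity of $t$ forces $t(i+j) = t(i) + t(j) = b+z$. Combining these three equalities yields $b + z = b = z$, hence $b = z = 0$, so the table becomes $i^2 = ij = ji = j^2 = 0$, as required.

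The argument is essentially bookkeeping once the integrality of the quadratic is in hand; the main (very small) obstacle is realizing that $t$ applied to $i$ and $j$ alone gives only two independent constraints, which is why introducing $i+j$ and using $R$-linearity of the trace is what closes the proof.
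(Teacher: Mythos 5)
Your proposal is correct, and it diverges from the paper's proof in the final (and only nontrivial) step. Both arguments begin identically: feeding $i$ and $j$ into the quadratic relation $x^2 - t(x)x + n(x) = 0$ and comparing coefficients against the commutative multiplication table forces $c = y = 0$, so that $ij = ji = 0$, $i^2 = bi$, $j^2 = zj$. To kill $b$ and $z$, the paper first pins down the involution explicitly ($\overline{i} = b - i$, $\overline{j} = z - j$, using $i + \overline{i}\in R$ and $i\,\overline{i}\in R$) and then invokes the anti-multiplicativity $\overline{ij} = \overline{j}\,\overline{i}$, which expands to $0 = bz - zi - bj$ and gives $b = z = 0$ by independence of $1, i, j$. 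You instead apply the quadratic relation to the cross element $i+j$, obtaining $t(i+j) = b$ and $t(i+j) = z$ from the two basis coefficients, and then use the $R$-linearity of the trace (established in the paper just after Lemma \ref{lem:xplusxbar}) to get $b + z = b = z$, hence $b = z = 0$. Your route never needs to write down the involution itself and relies only on the linearity of $t$ rather than on $\overline{xy} = \overline{y}\,\overline{x}$; the paper's route makes the involution concrete, which is mildly more informative but computationally equivalent. Both are complete and correct.
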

\begin{proof}
Since $A$ is commutative, we have $m= n=0$.  Then $A$ has multiplication table:
\begin{equation*}
\begin{array}{ccl}
i^2 &  = & -cz + bi + cj \\
ij &  = & cy \\
ji &  = & cy \\
j^2 &  = & -by + yi + zj \\
\end{array} 
\end{equation*}
Since $A$ has a standard involution, $i$ and $j$ satisfy monic quadratic equations in $R[x]$; i.e., there exist $p, q, r, s\in R$ such that $i^2 = pi-q$ and $j^2 = rj-s$. Equating coefficients with those in the above multiplication table we see that $p = b$, $c=0$, $r = z$, and $y = 0$, so that $i^2 = bi$ and $j^2 = zj$. Since $i+\overline{i}, j+\overline{j}\in R$, there exist $t, u\in R$ such that $\overline{i} = t-i$ and $\overline{j} = u-j$. Then since $i\overline{i}, j\overline{j}\in R$ we see that $\overline{i} = b-i$ and $\overline{j} = z-j$.

Next recall that $\overline{ij} = \overline{j}\, \overline{i}$, so that 
\begin{align*}
\overline{cy}&= cy = (z-j)(b-i) = bz - zi-bj + ji = bz - zi - bj + cy
\end{align*}
Subtracting $cy$ from each side we obtain
$
0 = bz - zi - bj
$. Then $b = z = 0$, which shows that $A$ is the nilproduct ring.\end{proof}
Thus we have shown that a cubic algebra $A$ is either commutative or possesses a standard involution, and that the unique commutative algebra with a standard involution is the nilproduct ring.

Thus we have the following picture of the parameter space of cubic algebras; the commutative algebras (C) have four degrees of freedom (multiplication table \ref{eqn:commutative}),  the algebras which are exceptional rings (E) have two degrees of freedom (multiplication table \ref{eqn:exceptional}), and the intersection of the two is the unique nilproduct ring $\mathcal{N}$.

\hspace{-12 mm}
\begin{tabular}{lr}
\begin{tikzpicture}
\node at (-1, 2.5) {(C)};
\node at (1, 2.5) {(E)};
\node at (0, 1) {$\bullet$};
\node at (.3, 1.5) {$\mathcal{N}$};
\node at (3, 1.5) {$m$};
\node at (3.4, 0) {$n$};
\node at (-4.2, 1.3) {$c$};
\node at (-3.5, 2) {$b$};
\node at (-2.8, .5) {$y$};
\node at (-3.2, -.1) {$z$};
\draw [->] (0, 1) -- (3, 0);
\draw [->] (0, 1) -- (2.5, 1.5);
\draw [->] (0, 1) -- (-3, -.2);
\draw [->] (0, 1) -- (-2.5, .5);
\draw [->] (0, 1) -- (-3.2, 2);
\draw [->] (0, 1) -- (-4, 1.2);
\draw [-] [dashed] (0,-.5) -- (0,3);
\end{tikzpicture}
\ \ &
\begin{tikzpicture}
\node at (0, 1) {$\bullet$};
\node at (-1.5, 1.5) {(C)};
\node at (2.25, .3) {(E)};
\node at (.4, 1.3) {$\mathcal{N}$};
\draw [-] (0, 0) -- (0, 2);
\draw [-] (-2,2) -- (0,2);
\draw [-] (0, 2) -- (-1, 1);
\draw [-] (-3, 1) -- (-2, 2);
\draw [-] (-1, 1) -- (-3, 1);
\draw [-] (0,0) -- (-1,-1);
\draw [-] (-1,-1) -- (-3,-1);
\draw [-] (-1,1) -- (-1,-1);
\draw [-] (-3, -1) -- (-3, 1);
\draw [-] (0, 1) -- (2.5, -.5);
\draw [-] (0, 1) -- (3.5, .8);
\draw [-] (2.5, -.5) -- (3.5, .8);
\end{tikzpicture}
\end{tabular}

\begin{thm}[Voight]\label{thm:cubicexceptional} A cubic algebra is an exceptional ring if and only if it possesses a standard involution.\end{thm}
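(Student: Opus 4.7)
The plan is to prove each direction separately, leaning heavily on the structure theorem already developed in this section. The forward direction, that every exceptional ring possesses a standard involution, is simply Proposition \ref{prop:exceptionalinvolution} applied to the cubic case, so no new argument is required there. The entire substance of the theorem lies in the reverse direction: showing that a cubic algebra equipped with a standard involution must be exceptional.

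For the reverse direction, I would begin by invoking Theorem \ref{thm:main} to reduce to two cases: either $A$ is noncommutative (case (E) of Lemma \ref{lem:cases}), or $A$ is commutative. In the noncommutative case there is nothing to do, since Proposition \ref{prop:exceptional} already shows that every algebra with multiplication table \ref{eqn:exceptional} is exceptional, independently of whether the involution hypothesis is used. In the commutative case, Lemma \ref{lem:nilproduct} forces $A$ to be the nilproduct ring $R[x,y]/(x,y)^2$, and it remains to verify that this particular algebra is exceptional.

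The verification for the nilproduct ring is the one step that is not already packaged in the preceding material, but it is quick: take $M = Ri + Rj$, which is obviously a left ideal satisfying $A = R \oplus M$, and observe that $xy = 0$ for all $x,y \in M$ by definition of nilproduct, so left multiplication by any element of $M$ factors through the zero linear map $t:M \to R$. Thus $xy = t(x)y$ trivially, and $A$ is exceptional by Definition \ref{def:exceptional}.

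I do not expect any serious obstacle here, since almost all the machinery has been constructed earlier in the section; the theorem is essentially a bookkeeping assembly of Theorem \ref{thm:main}, Proposition \ref{prop:exceptional}, Lemma \ref{lem:nilproduct}, and Proposition \ref{prop:exceptionalinvolution}. The only item one might worry about is whether the nilproduct ring falls under case (E) as well (in which case the verification above is redundant), but inspecting the multiplication table \ref{eqn:exceptional} shows that setting $m = n = 0$ there gives exactly the nilproduct table, so the nilproduct ring is in fact the intersection $\mathcal{N}$ of cases (C) and (E) and is exceptional for that reason too. Either way, the conclusion follows.
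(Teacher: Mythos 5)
Your proposal is correct and follows essentially the same route as the paper: the forward direction is Proposition \ref{prop:exceptionalinvolution}, and the reverse direction splits into the commutative case (Lemma \ref{lem:nilproduct}, nilproduct ring) and the noncommutative case (Lemma \ref{lem:cases} case (E) plus Proposition \ref{prop:exceptional}). Your explicit check that the nilproduct ring is exceptional, via $M = Ri + Rj$ and $t = 0$, fills in a step the paper merely asserts, and your observation that it also arises as $m=n=0$ in table \ref{eqn:exceptional} matches the paper's description of $\mathcal{N}$ as the intersection of cases (C) and (E).
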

\begin{proof}

In Proposition \ref{prop:exceptionalinvolution} we showed that exceptional rings possess a standard involution. 

Next suppose that $A$ possesses a standard involution. If $A$ is commutative (by Lemma \ref{lem:nilproduct}), $A$ is the nilproduct ring, which is exceptional. If $A$ is noncommutative, then $A$ is in case (E) of Lemma \ref{lem:cases}. Algebras in case (E) were shown to be exceptional rings in Proposition \ref{prop:exceptional}.
\end{proof}

\begin{ex}\label{ex:charpoly} Specializing to case {\rm(E)}, we compute the characteristic polynomial of an arbitrary element $p + qi + rj\in A$. Since $p + qi + rj$ satisfies a quadratic equation, we expect to see a repeated factor.
From multiplication table \ref{eqn:exceptional} we have the following left-regular representation of $A:$
$$
1 =
\begin{bmatrix}
1& 0 & 0 \\
0& 1& 0\\
0& 0& 1
\end{bmatrix}, \ I = 
\begin{bmatrix}
0& 0 & 0 \\
1& m& 0\\
0& 0& m
\end{bmatrix},  \text{ and } J = 
\begin{bmatrix}
0& 0 &0\\
0& n& 0\\
1 & 0& n
\end{bmatrix},
$$
and so $p + qi + rj$ corresponds to the lower-triagular matrix
$$
\begin{bmatrix}
p& 0 & 0\\
q& p+mq +nr& 0\\
r& 0& p +mq + rn
\end{bmatrix}.
$$
Then the characteristic polynomial $P(T)$ of $p + qi + rj$ is $(T-p)(T - p-mq-rn)^2$.
\end{ex}

\begin{prop}\label{cubicfield}
If $A$ is a cubic algebra over a field $F$ and $A$ possesses a standard involution, then $A$ is the nilproduct ring or isomorphic to the algebra of upper-triangular $2\times 2$ matrices over $F$.
\end{prop}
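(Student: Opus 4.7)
The plan is to invoke Theorem \ref{thm:cubicexceptional}: since $A$ possesses a standard involution, $A$ is an exceptional ring, so we may decompose $A = F \oplus M$ where $M$ is a $2$-dimensional left ideal equipped with an $F$-linear functional $t\colon M \to F$ satisfying $xy = t(x)\,y$ for all $x, y \in M$. I would then split the argument into two cases based on whether $t$ vanishes identically.

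If $t \equiv 0$, then every product of two elements of $M$ is zero, so for any basis $x, y$ of $M$ we obtain $x^2 = xy = yx = y^2 = 0$, exhibiting $A$ as the nilproduct ring. If instead $t$ is not identically zero, then because $F$ is a field, $\ker t \subset M$ is exactly one-dimensional. I would choose a nonzero $x_0 \in \ker t$ together with $y_0 \in M$ satisfying $t(y_0) = 1$ (by rescaling any element on which $t$ is nonzero). Then $x_0, y_0$ is a basis of $M$, and the defining identity $xy = t(x)y$ immediately yields the multiplication table
\[ x_0^2 = 0, \quad x_0 y_0 = 0, \quad y_0 x_0 = x_0, \quad y_0^2 = y_0. \]

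To match this with $T_2(F)$, I would exhibit the standard matrix units $e_{11}, e_{12}$, which span a $2$-dimensional left ideal of $T_2(F)$ complementing $F \cdot I_2 = F(e_{11} + e_{22})$ and which satisfy $e_{11}^2 = e_{11}$, $e_{11} e_{12} = e_{12}$, $e_{12} e_{11} = 0$, and $e_{12}^2 = 0$. The $F$-linear map $A \to T_2(F)$ determined by $1 \mapsto I_2$, $y_0 \mapsto e_{11}$, and $x_0 \mapsto e_{12}$ is then a basis-to-basis bijection that preserves every product in the multiplication table, and hence extends uniquely to an $F$-algebra isomorphism $A \cong T_2(F)$. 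The main obstacle I anticipate is the bookkeeping of the case separation: confirming that in the noncommutative case the one-dimensional $\ker t$ produces the correct idempotent $y_0$ and right-annihilated nilpotent $x_0$, and that the resulting basis plays the same structural role in $T_2(F)$ that $e_{11}$ and $e_{12}$ do relative to the left ideal of upper rows.
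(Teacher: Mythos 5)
Your proposal is correct, but it proves the proposition by a more constructive route than the paper does. The paper's proof works with the explicit two-parameter multiplication table $i^2 = ni$, $ij = 0$ (after a good-basis change, $Ij = nj$), $ji = mi$, $j^2 = mj$ from case (E), and then observes that $\mathrm{GL}_2(F)$ acts transitively on the nonzero parameter pairs $(n,m)$, so there are exactly two isomorphism classes of exceptional cubic $F$-algebras: the nilproduct ring ($n=m=0$) and everything else; it then verifies that $T_2(F)$ is an exceptional cubic algebra that is not the nilproduct ring, and concludes it must represent the second class. You instead work intrinsically with the exceptional-ring data $A = F \oplus M$, $xy = t(x)y$, split on whether the functional $t$ vanishes, and in the nonvanishing case normalize a basis $x_0 \in \ker t$, $t(y_0)=1$ to get the table $y_0^2 = y_0$, $y_0x_0 = x_0$, $x_0y_0 = x_0^2 = 0$, which you match directly against $e_{11}, e_{12} \in T_2(F)$. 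Your version buys an explicit isomorphism and avoids the (slightly glossed-over) verification that the $\mathrm{GL}_2$ orbit computation on $(n,m)$ really tracks isomorphism of the algebras; the paper's version buys the sharper statement that \emph{all} exceptional cubic $F$-algebras other than the nilproduct ring lie in a single isomorphism class, which is where $T_2(F)$ then lands. All the individual product checks in your table ($e_{11}e_{12}=e_{12}$, $e_{12}e_{11}=e_{12}^2=0$, and $\{I_2, e_{11}, e_{12}\}$ a basis of $T_2(F)$) are correct, and linear extension from a product-preserving bijection of bases does give an algebra isomorphism, so no gap remains.
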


\begin{proof}
In the special case that $R$ is a field $F$ (so $A$ is a vector space), we can compute the isomorphism classes of exceptional $F$-algebras. For $(n, m)\in F\times F$, let $A$ be the exceptional ring with multiplication given by $i^2 = ni, ij = nj, ji = mi, j^2 = mj$. For $(N, M)\in F\times F$, the equation
\begin{align*}
\begin{bmatrix} \alpha & \beta\\ \gamma & \delta \end{bmatrix} \begin{bmatrix} n\\ m \end{bmatrix} =
\begin{bmatrix} N \\ M \end{bmatrix}
\end{align*}
has a solution $[\begin{smallmatrix} \alpha & \beta\\ \gamma & \delta \end{smallmatrix}]\in GL_2(F)$ if $(n, m)\not=(0,0)$ and $(N, M)\not=(0,0)$, or is true for any $\alpha, \beta, \gamma, \delta\in F$  if $(n, m)=(N, M)=(0,0)$. Therefore the two isomorphism classes are the singleton class of the nilproduct ring and all other exceptional rings over $F$.

Consider $T$, the ring of upper triangular $2\times 2$ matrices over $F$. Observe that $T$ is an $F$-algebra of rank $3$ with basis 
$1 = [\begin{smallmatrix} 1& 0\\ 0& 1  \end{smallmatrix}],$
$i = [\begin{smallmatrix} 0& 0\\ 0& 1  \end{smallmatrix}],$
 $j = [\begin{smallmatrix} 0& 1\\ 0& 0  \end{smallmatrix}].$ 
 Computing the multiplication table: 
 \begin{align*}
 i^2 &= i\\
 ij &= 0\\
 ji&= j\\
 j^2&= 0
 \end{align*}
we see that $T$ is not the nilproduct ring. As in example \ref{ex:adjoint}, the adjoint map is a standard involution on $T$. Therefore 
cubic $F$-algebras which possess a standard involution are either the nilproduct ring or isomorphic to the algebra of upper triangular $2\times 2$ matrices.
\end{proof}

\section{Semisimple algebras with standard involutions}

In this section we use Wedderburn's theorem and an extended version of Frobenius's theorem to classify semisimple algebras over a field $F$ (with char$(F)\not= 2$) that have a standard involution. We show that such an algebra must be $F$, a quadratic extension of $F$, a quaternion algebra over $F$, or $F\times F$. These results are known \cite{Voightquat}. Let $A$ be an $F$-algebra. 

\begin{definition}\label{def:simple}
An $F$-algebra is \emph{simple} if it has no nontrivial (two-sided) ideals. We say $A$ is \emph{semisimple} if $A$ is a finite direct product of simple algebras {\rm\cite{dandf}}. 
\end{definition}

\begin{definition}\label{def:degree}
The \emph{degree} of $A$ is the smallest integer $m\in \mathbb{Z}_{\ge 1}$ such that every element $x\in A$ satisfies a monic polynomial $f(T)\in F[T]$ of degree $m$; if no such integer exists, we say $A$ has degree $\infty$. Define the \emph{degree} of an element $x\in A$ to be the degree of the minimal polynomial of $x$, or $\infty$ if no such polynomial exists.
\end{definition}

\begin{ex}\label{ex:lowdegree}
If $A$ has degree 1, then $A = F$. If $A$ possesses a standard involution, then since each $x\in A$ satisfies a monic quadratic in $F[T]$, the degree of $A$ is at most 2.\end{ex}

The following proof is similar to the proof of Frobenius's theorem (Theorem \ref{thm:frobenius}) and appears in \cite{Voightquat}. We include it for completeness.

\begin{thm}[Frobenius, Extended]\label{thm:extendedfrobo}
Let $A$ be a division $F$-$algebra$ of degree $2$ and suppose that {\rm char}$(F)\not=2$. Then either $A$ is a quadratic field extension of $F$ or $A$ is a quaternion $F$-algebra.
\end{thm}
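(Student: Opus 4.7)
The plan is to model the argument on Frobenius's Theorem (Theorem \ref{thm:frobenius}), replacing $\mathbb{R}$ by $F$ and using the degree-$2$ hypothesis in place of the Fundamental Theorem of Algebra. Since every $x \in A$ satisfies a monic quadratic over $F$, $A$ carries a standard involution $\bar{\phantom{x}}$ (with trace $t$ and norm $n$ as in Remark \ref{rem:invpoly}), and because $\mathrm{char}(F) \neq 2$ we have $A = F \oplus A^0$ where $A^0 = \{x \in A : \bar{x} = -x\}$ is the trace-zero subspace. Elements $x \in A^0$ satisfy $x^2 = -n(x) \in F$, and if $x \neq 0$ then $x^2 \in F^\times$ because $A$ is a division algebra.

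The small-dimensional cases dispatch quickly. If $\dim_F A = 1$, then $A = F$ has degree $1$, contradicting the hypothesis. If $\dim_F A = 2$, pick any $i \in A^0 \setminus \{0\}$; then $A = F \oplus Fi$ with $i^2 \in F^\times$, and $A = F[i]$ is a quadratic field extension, since $T^2 - i^2$ must be irreducible over $F$ (else $i$ would factor in the domain $A$, forcing $i \in F$).

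Now suppose $\dim_F A \geq 3$. I would fix $i \in A^0$ with $i^2 = a \in F^\times$ and study the inner automorphism $\varphi(x) = ixi^{-1}$. Since $i^2 = a$ is central, $\varphi^2 = \mathrm{id}$, and because $\mathrm{char}(F) \neq 2$, $\varphi$ diagonalizes so that $A = A^+ \oplus A^-$ with respect to the eigenvalues $\pm 1$. The central claim is that $A^+ = F[i]$. For $x \in A^+$, writing $x = t(x)/2 + x'$ reduces to the case $x' \in A^+ \cap A^0$; then $z = x' + i$ lies in $A^0$, so $z^2 \in F$, but $z^2 = (x')^2 + 2ix' + i^2$ with $(x')^2, i^2 \in F$, forcing $2ix' \in F$ and hence $ix' \in F$. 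Multiplying by $i^{-1} = i/a$ yields $x' \in Fi$, so $x \in F[i]$, proving the claim.

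Since $A^+ = F[i]$ has dimension $2$ and $\dim_F A \geq 3$, there exists $j \in A^- \setminus \{0\}$, so $ij = -ji$. Applying $\bar{\phantom{x}}$ to this relation and using $\bar{i} = -i$ gives $\varphi(\bar{j}) = -\bar{j}$, so $\bar{j} \in A^-$; hence $j + \bar{j} \in F \cap A^- \subseteq A^+ \cap A^- = 0$, which forces $j \in A^0$ and $j^2 = b \in F^\times$. A brief independence argument using $\varphi$ shows that $1, i, j, ij$ are $F$-linearly independent and generate a quaternion subalgebra $Q$ with $i^2 = a$, $j^2 = b$, $ji = -ij$. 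To conclude $A = Q$, I would note that $A^+ = F[i] \subset Q$ and that for any $k \in A^-$, the product $kj^{-1}$ is fixed by $\varphi$ and hence lies in $A^+ = F[i]$, so $k \in F[i] \cdot j = Fj + Fij \subset Q$. The main obstacle is the claim $A^+ = F[i]$: the trick of squaring $z = x' + i$ into $F$ replaces the use of algebraic closure of $\mathbb{C}$ from the classical argument (Corollary \ref{cor:fieldRorC}) and depends essentially on $\mathrm{char}(F) \neq 2$.
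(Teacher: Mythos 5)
Your overall architecture matches the paper's: conjugate by a square-central element $i$, split $A = A^+ \oplus A^-$ into $\pm 1$ eigenspaces, show $A^+ = F[i]$ and $A^- = F[i]j$. The genuinely different ingredient is your proof of the key claim $A^+ = F[i]$, which replaces the paper's separability-and-primitive-element argument by the computation $z^2 = (x')^2 + 2ix' + i^2 \in F$. That step exposes a real gap: it requires $z = x' + i$ to lie in $A^0$, i.e.\ it requires the trace-zero set $A^0 = \{x \in A : \overline{x} = -x\}$ to be closed under addition. You obtain this from your opening assertion that ``since every $x\in A$ satisfies a monic quadratic over $F$, $A$ carries a standard involution,'' but that assertion is exactly the $F$-linearity of the reduced trace $t$, which is a nontrivial theorem and does not follow element-by-element from the degree hypothesis. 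Worse, within this paper the statement ``a degree-$2$ division $F$-algebra possesses a standard involution'' is Corollary \ref{cor:divisionstandard}, which is \emph{deduced from} Theorem \ref{thm:extendedfrobo}; so as written your argument is circular relative to the paper's development. Each individual $x \notin F$ does decompose as $t(x)/2 + x'$ with $(x')^2 \in F$, but without linearity of $t$ you cannot conclude that the sum of two such trace-zero elements again squares into $F$ --- and that is precisely what ``$z \in A^0$'' asserts.

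The gap is repairable, but the work has to be done. Either prove linearity of $t$ directly (expand $(x+y)^2$ and $(x+1+y)^2$ to show $t(x+y)=t(x)+t(y)$ whenever $1,x,y$ are independent, as in \cite{Voightquat}), or argue as the paper does: for $x \in A^+$ the subalgebra $F(i,x)$ is a commutative division algebra, hence a field, finite and separable over $F$ because $\mathrm{char}(F)\neq 2$ rules out inseparable quadratic minimal polynomials; the primitive element theorem then gives $F(i,x)=F(z)$ with $z$ of degree at most $2$, forcing $F(i,x)=F(i)$. Similarly, your claim $\overline{j}\in A^-$ leans on the involution; it can be replaced by applying $\varphi$ to $j^2 = t(j)j - n(j)$ to obtain $2t(j)j=0$, hence $t(j)=0$ and $j^2\in F^\times$. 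The remaining steps (independence of $1,i,j,ij$, and $A^- = F[i]j$ via $kj^{-1}\in A^+$) are correct and agree with the paper.
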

\begin{proof}
Since the degree of $A$ is not 1, $A\not=F$ by Example \ref{ex:lowdegree}. Let $i\in A\setminus F$. Since the degree of $A$ is 2 there exist $t, n\in F$ such that $i^2 = -n+ti$ $(n\not=0)$. Thus the extension $K = F(i)$ over $F$ is quadratic. Since $K = F\oplus Fi$ is a field we may consider $A$ as a (left) $K$-vector space. Then $A = K$ if and only if dim$_F(A) = 2$, if and only if dim$_K(A) = 1$.  
 
Since $2\in F^\times$, as in Lemma \ref{lem:completesquare} we may take $i^2 = a$ for some $a\in F^\times$ by ``completing the square." Let $\varphi:A\to A$ be conjugation by $i$; $\varphi(x) = ixi^{-1}$. Note that since $i^2 = a$, we have $a^{-1} = i^{-2}$ and $a^{-1}i = i^{-1}$. Thus $\varphi(x) = a^{-1}ixi$. A check of the axioms shows that $\varphi$ is a $K$-linear endomorphism of $A$. Observe that $\varphi\circ\varphi$ is the identity on $A$. Since $\varphi^2 - 1 = 0$ in End$_A$, the roots of the minimal polynomial of $\varphi$ are distinct, so $\varphi$ is diagonalizable with eigenvalues $\pm 1$. Let $A^-$ be the eigenspace of $-1$ and $A^+$ be the eigenspace of $1$, so that $A = A^+\oplus A^-$. Since $\varphi(i) = i$ we have $K\subset A^+$.

Next we show that dim$_K A^+ = 1$. 
Let $x\in A^+$. Consider the field $L = F(i, x)$. The degree of $L/F(i)$ is at most 2 because $x$ satisfies a quadratic in $F\subset F(i)$. Then the degree of $L/F$ is finite (at most 4). Let $y\in L\setminus F$ and let $f(t) = t^2 - bt + c\in F[t]$ be the minimum polynomial of $y$. Suppose $f(t)$ factors in $L$ as $(t-y)^2$. Then $2y = b\in F$, a contradiction. Therefore $L$ is separable. Since $L$ is finite and separable over $F$, by the primitive element theorem there exists $z\in L$ such that $L = F(z)$. Since $z$ satisfies a quadratic polynomial in $F[t]$, $L$ is at most a quadratic extension of $F$. Together with the fact that $L$ is an extension of $K$ we see that dim$_F(L) = 2$, and so $L = K$. Since $K\subset A^+$ and $K(x) = K$ for all $x\in A^+$ we obtain  dim$_K A^+ = 1$, so $A^+ = K$.

Now we show that dim$_K A^- = 1$. Let $j\in A^-\setminus\{ 0\}$. Then $\varphi(j) = -j = iji^{-1}$, so $jij^{-1} = -i$, and hence all elements of $A^-$ conjugate $i$ to $-i$. Also note that $j^{-1}\in A^-$. Let $p, q\in A^-$. Then 
$$
(pq)i(pq)^{-1} = p(q i  q^{-1})p^{-1} = p(-i)p^{-1} = i,
$$
so the product of two elements of $A^-$ is in $A^+$ and also is in the centralizer of $i$. Let $k\in A^-$. Since $k = (kj^{-1})j$ we see that any element of $A^-$ differs multiplicatively from $j$ by an element of $A^+ = K$. Thus $A^- \cong Kj$ and so dim$_K A^- = 1$.

Finally we show that $A$ is a quaternion algebra. By completing the square, we may assume that $j^2 = b\in F^\times$. Since $j\in A^-$ we have $iji^{-1} = -j$ so $ij = -ji$. Therefore $A\cong \left(\dfrac{a, b}{F}  \right)$.
\end{proof}

\begin{cor}\label{cor:divisionstandard}
If $D$ is a degree $2$ division $F$-algebra then $D$ possesses a standard involution.
\end{cor}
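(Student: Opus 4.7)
The plan is essentially to unpack the structure theorem just proved. Since $D$ has degree exactly $2$, there is some element that does not satisfy a linear polynomial, so $D \ne F$. Invoking Theorem \ref{thm:extendedfrobo} (the extended Frobenius theorem), I conclude that $D$ falls into one of two cases: either $D$ is a quadratic field extension of $F$, or $D$ is a quaternion $F$-algebra.

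In the quadratic extension case, $D$ is a free quadratic $F$-algebra (rank $2$ as an $F$-module), so the remark at the end of Section 2 (``Remarks on Quadratic Algebras'') applies: writing $D \cong F[x]/(x^2 - tx + n)$ and extending $\overline{x} = t - x$ $F$-linearly yields a standard involution, since $x\,\overline{x} = n \in F$, the map is self-inverse, fixes $1$, and reverses products on the generators (which suffices by $F$-linearity). In the quaternion case, Example \ref{ex:quaternioninvolution} has already exhibited the desired standard involution $p + qi + rj + sij \mapsto p - qi - rj - sij$ and verified all the axioms by direct expansion.

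The ``hard part'' of this corollary has in fact been absorbed entirely into the proof of Theorem \ref{thm:extendedfrobo}; what remains is a one-line case split, followed by an appeal to two already-constructed involutions. So the only real content of my proof will be to observe that degree $2$ forces $D \ne F$, apply the dichotomy from the extended Frobenius theorem, and in each branch cite the concrete standard involution previously built (the quadratic ``conjugation'' $x \mapsto t - x$ in one case, and quaternionic conjugation in the other).
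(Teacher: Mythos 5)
Your proof is correct and follows essentially the same route as the paper: apply the extended Frobenius theorem, then cite the already-constructed standard involutions on quadratic algebras (\S 2.3) and on quaternion algebras (Example \ref{ex:quaternioninvolution}). The only cosmetic difference is that the paper also records the trivial case $D = F$ (with the identity involution), whereas you correctly note that degree exactly $2$ rules this case out.
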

\begin{proof} By Theorem \ref{thm:extendedfrobo}, $D$ is either $F$, a quadratic field extension of $F$, or a quaternion algebra over $F$.
If $D = F$ then the identity is a standard involution. If $D$ is a quadratic field extension over $F$ then $D$ is a quadratic algebra over $F$. We showed that quadratic algebras possess a (unique) standard involution in section 2.3. If $D$ is a quaternion algebra, then $D$ has a standard involution by Example \ref{ex:quaternioninvolution}.\end{proof}

\begin{lem}\label{lem:involutionMnD}
Let $D$ be a division algebra containing field $F$ and consider the matrix ring $M_n(D)$ as an $F$-algebra. If $M_n(D)$ possesses a standard involution then $n\le 2$, and if $n=2$ then $D = F$.
\end{lem}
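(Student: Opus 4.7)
The plan is to convert the hypothesis into a degree condition and then find an element of too large a degree in each of the bad cases. By Remark \ref{rem:invpoly}, every $x \in M_n(D)$ satisfies the quadratic $T^2 - (x+\overline{x})T + x\overline{x} \in F[T]$, so $M_n(D)$ has degree at most $2$ in the sense of Definition \ref{def:degree}. It therefore suffices to exhibit an element of degree $\ge 3$ whenever $n \ge 3$, and whenever $n = 2$ with $D \ne F$.

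For the case $n \ge 3$, I would use the shift matrix $N = E_{12} + E_{23} + \cdots + E_{n-1,n} \in M_n(D)$. A direct calculation shows $N^k = \sum_{i=1}^{n-k} E_{i,i+k}$; in particular $N^{n-1} = E_{1,n} \ne 0$ while $N^n = 0$, so the minimal polynomial of $N$ over $F$ is $T^n$. Equivalently, if we suppose $N^2 + aN + bI = 0$ with $a,b \in F$, comparing the $(1,3)$-entries yields $1 = 0$ (since $N^2$ has a $1$ there while $aN + bI$ has no $(1,3)$-entry for $n \ge 3$). This contradicts the degree-$2$ bound.

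For the case $n = 2$ with $D \ne F$, I would pick $d \in D \setminus F$ and consider $X = \left[\begin{smallmatrix} d & 0 \\ 0 & 0 \end{smallmatrix}\right] \in M_2(D)$. If $X^2 + aX + bI = 0$ with $a,b \in F$, the $(2,2)$-entry forces $b = 0$ and the $(1,1)$-entry then gives $d^2 + ad = d(d+a) = 0$. Since $D$ is a division ring and $d \ne 0$ (as $0 \in F$), we conclude $d = -a \in F$, contradicting $d \in D \setminus F$.

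Neither step is a serious obstacle: the key realization is simply that the standard involution hypothesis is equivalent to a uniform degree bound, and that bound is violated by very concrete witnesses (a single Jordan block for large $n$, a diagonal embedding of a non-central element for $n = 2$). The only subtlety worth flagging is that in the $n=2$ subcase one must use that $D$ is a \emph{division} ring to cancel $d$ from $d(d+a) = 0$; this is exactly where the hypothesis on $D$ enters.
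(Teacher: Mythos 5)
Your proof is correct, and while it follows the same overall strategy as the paper (a standard involution forces every element to satisfy a monic quadratic over $F$, so one need only exhibit elements of degree at least $3$), your choice of witnesses is genuinely different and in some respects cleaner. For $n\ge 3$ the paper uses the diagonal matrix with entries $0,-1,1,\ldots,1$, whose characteristic polynomial has three distinct roots $0,\pm 1$; this implicitly needs $1\ne -1$, i.e.\ the section's standing hypothesis $\mathrm{char}(F)\ne 2$, whereas your nilpotent shift matrix $N$ with $N^2\ne 0$ works over any field and even dispenses with the assumption that $D$ is a division ring for this half. For $n=2$ the paper first shows via the scalar matrix $[i]$ that every $i\in D\setminus F$ satisfies a quadratic, completes the square (again using $2\in F^\times$) to get $i^2=a$, and then extracts a contradiction from $\bigl[\begin{smallmatrix}1&0\\0&i\end{smallmatrix}\bigr]$; your single matrix $\bigl[\begin{smallmatrix}d&0\\0&0\end{smallmatrix}\bigr]$ reaches the contradiction in one step without completing the square, correctly isolating the one place the division-ring hypothesis is used (cancelling $d$ in $d(d+a)=0$). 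The trade-off is minor: the paper's route exposes the auxiliary fact that elements of $D$ themselves have degree $\le 2$, which it reuses elsewhere, while yours is shorter and characteristic-free.
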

\begin{proof}
Let $n=2$. By Example \ref{ex:adjoint}, $M_2(F)$ has a standard involution (the adjoint map). Let $i\in D\setminus F$ and consider the matrix $i\cdot 1 = [i]$. Since $M_2(D)$ possesses a standard involution, $[i]$ has degree at most 2. So there exist $t, n\in F$ such that $[i]^2 - t[i] + n = 0$. Then in particular, $i^2- ti + n = 0$. Hence every element $i\in D\setminus F$ satisfies a monic quadratic polynomial in $F[x]$. Since $2\in F^\times$ we can complete the square and assume $i^2 = a\in F^\times$. 

Now consider $[\begin{smallmatrix} 1&0\\ 0& i \end{smallmatrix}]\in M_2(D)$. There exist $T, N\in F$ such that $[\begin{smallmatrix} 1&0\\ 0& i \end{smallmatrix}]$ satisfies $x^2 - Tx + N\in F[x]$. Then in particular we have
\begin{align*}
1 - T+N = 0 = i^2 - Ti + N,
\end{align*}
from which we see that $N = -a$ and $T = 0$, and so $a = 1$. From $i^2 = 1$, it follows that $(i-1)(i+1) = 0$. Since $D$ is a division algebra $i$ must equal $\pm 1$, a contradiction. Therefore if $n=2$ then $D = F$.

Suppose $n\ge 3$. Let $M\in M_n(D)$ be the diagonal matrix with diagonal entries $a_{1,1}, a_{2,2}, \ldots, a_{n,n}= 0, -1, 1, 1,  \ldots, 1$. The characteristic polynomial $c_{M}(x)$ of $M$ is $x(x+1)(x-1)^{n-2}$. Since the characteristic polynomial has three distinct roots ($0, \pm 1$), the minimal polynomial $\mu_M(x)$ of $M$ has deg$(\mu_M)\ge 3$, a contradiction\\ \cite[Section 12.2]{dandf}.
\end{proof}

Wedderburn's theorem greatly simplifies our classification.
\begin{thm}[Wedderburn]\label{thm:Wedderburn}
Let $B$ be a simple $F$-algebra. Then $B$ is isomorphic to $M_n(D)$ for some unique (up to isomorphism) division $F$-algebra $D$ and unique $n\in \mathbb{Z}_{\ge 1}$.
\end{thm}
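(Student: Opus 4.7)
The plan is to exhibit the decomposition via the regular representation: find a minimal left ideal $L\subset B$, recover a division algebra $D$ as its $B$-endomorphism ring by Schur's lemma, and then identify $B$ with $M_n(D)$ through $\mathrm{End}_B(B)$.

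First I would exploit the standing assumption of this paper that $B$ has finite dimension over $F$. Any chain of left ideals of $B$ is then a chain of $F$-subspaces of strictly decreasing dimension, so the descending chain condition holds and I can choose a minimal nonzero left ideal $L\subset B$. Then $L$ is simple as a left $B$-module, and Schur's lemma gives that $D:=\mathrm{End}_B(L)$ is a division ring; since $F$ sits in the center of $B$, it acts on $L$ by $B$-module maps and embeds into $D$, making $D$ a division $F$-algebra. The space $L$ is then naturally a right $D$-module.

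Next I would show that $B\cong L^n$ as left $B$-modules for some $n\ge 1$. Consider the sum $I:=\sum_{b\in B}Lb$. As a sum of left ideals this is a left ideal, and by construction it is closed under right multiplication by $B$, so it is a two-sided ideal; since $I$ is nonzero, simplicity of $B$ forces $I=B$. Each summand $Lb$ is either zero or simple, isomorphic to $L$ via the surjection $\ell\mapsto \ell b$ whose kernel must vanish because $L$ is simple. The standard refinement lemma for a sum of simple submodules then produces a finite direct sum $B=L_1\oplus\cdots\oplus L_n$ with each $L_k\cong L$, finiteness coming from $\dim_F B<\infty$.

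Finally, using $\mathrm{End}_B(B)\cong B^{\mathrm{op}}$ (every left $B$-endomorphism of $B$ is right multiplication by some fixed element, and composition reverses order), I obtain
\[
B^{\mathrm{op}}\;\cong\;\mathrm{End}_B(B)\;\cong\;\mathrm{End}_B(L^n)\;\cong\;M_n\bigl(\mathrm{End}_B(L)\bigr)\;=\;M_n(D),
\]
so $B\cong M_n(D)^{\mathrm{op}}\cong M_n(D^{\mathrm{op}})$; since $D^{\mathrm{op}}$ is again a division $F$-algebra I can rename it to recover the stated form. For uniqueness, I would run the construction backward: any simple left module of $M_n(D)$ is isomorphic to the column-vector module $D^n$, whose $M_n(D)$-endomorphism ring recovers $D$ up to isomorphism, and then $n$ is pinned down by the dimension identity $\dim_F B=n^2\dim_F D$. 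The step I expect to be the main obstacle is justifying the refinement from $B=\sum Lb$ to an internal direct sum $L^n$, since this is precisely where the semisimplicity of a simple Artinian algebra enters; the opposite-ring bookkeeping is a secondary place where it is easy to slip a direction.
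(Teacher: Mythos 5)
Your proposal is correct: it is the standard Artin--Wedderburn argument (minimal left ideal, Schur's lemma, $B\cong L^n$, $\mathrm{End}_B(B)\cong B^{\mathrm{op}}$), and each step is justified, including the use of finite $F$-dimension to get the descending chain condition and the finiteness of the decomposition. There is nothing in the paper to compare it against: the thesis states Wedderburn's theorem as a known result and gives no proof, so your argument supplies a proof the paper omits. The only place requiring care is the one you already flagged -- the passage from $B^{\mathrm{op}}\cong M_n(\mathrm{End}_B(L))$ to $B\cong M_n(D)$ with $D=\mathrm{End}_B(L)^{\mathrm{op}}$, and the matching opposite in the uniqueness step where $\mathrm{End}_{M_n(D)}(D^n)$ recovers $D$ only up to taking opposites -- but since the class of division $F$-algebras is closed under $(\,\cdot\,)^{\mathrm{op}}$ and $n$ is fixed by $\dim_F B=n^2\dim_F D$, the stated conclusion follows.
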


\begin{thm}\label{thm:simple} Let $A$ be a simple $F$-algebra of degree less than or equal to $2$, with {\rm char}($F)\not=2$.  Then $A$ is isomorphic to $F$, a quadratic field extension of $F$, or a quaternion algebra over $F$.
\end{thm}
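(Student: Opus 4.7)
The plan is to apply Wedderburn's theorem (Theorem \ref{thm:Wedderburn}) to reduce immediately to the case $A \cong M_n(D)$ for some division $F$-algebra $D$ and some $n \in \mathbb{Z}_{\ge 1}$, and then split into three cases according to $n$. In each case I will exploit the hypothesis that every element of $A$ satisfies a polynomial of degree at most $2$ over $F$, using either a matrix-theoretic eigenvalue argument or the Extended Frobenius Theorem (Theorem \ref{thm:extendedfrobo}).

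First suppose $n \ge 3$. Because $\mathrm{char}(F) \ne 2$, the elements $0, 1, -1 \in F$ are pairwise distinct, so I can form the diagonal matrix $M = \mathrm{diag}(0, 1, -1, 1, \ldots, 1) \in M_n(D) \cong A$. Its minimal polynomial is divisible by each of $x$, $x-1$, and $x+1$, hence has degree at least $3$; this contradicts the assumption $\deg(A) \le 2$, so this case does not occur. (This is essentially the second half of the proof of Lemma \ref{lem:involutionMnD}.)

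Next suppose $n = 2$. I will reuse the argument of Lemma \ref{lem:involutionMnD}: given any $i \in D \setminus F$, the scalar matrix $[i] \in M_2(D)$ has degree at most $2$ over $F$, so $i$ itself satisfies a monic quadratic over $F$; since $2 \in F^\times$, completing the square puts $i^2 = a \in F^\times$. Then the matrix $\bigl[\begin{smallmatrix} 1 & 0 \\ 0 & i \end{smallmatrix}\bigr]$ also satisfies a quadratic $x^2 - Tx + N$ over $F$; comparing the two diagonal entries forces $T = 0$ and then $a = 1$, so $(i-1)(i+1) = 0$ in the division algebra $D$, forcing $i = \pm 1 \in F$, a contradiction. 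Therefore $D = F$, and $A \cong M_2(F)$. To place this in the desired list I will exhibit an explicit isomorphism $M_2(F) \cong \bigl(\tfrac{1,1}{F}\bigr)$ via $i \mapsto \bigl[\begin{smallmatrix} 1 & 0 \\ 0 & -1 \end{smallmatrix}\bigr]$ and $j \mapsto \bigl[\begin{smallmatrix} 0 & 1 \\ 1 & 0 \end{smallmatrix}\bigr]$, which satisfy $i^2 = j^2 = 1$ and $ji = -ij$.

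Finally suppose $n = 1$, so $A = D$ is a division $F$-algebra of degree at most $2$. If $\deg(A) = 1$ then $A = F$ by Example \ref{ex:lowdegree}. Otherwise $\deg(A) = 2$, and the Extended Frobenius Theorem (Theorem \ref{thm:extendedfrobo}) gives at once that $A$ is either a quadratic field extension of $F$ or a quaternion $F$-algebra. Combining the three cases yields the classification. The main obstacle is the $n = 2$ case, where I have to squeeze out of the degree hypothesis alone the conclusion $D = F$; this is handled exactly as in Lemma \ref{lem:involutionMnD}, whose proof used the standard involution only to obtain the degree bound that we now have by assumption.
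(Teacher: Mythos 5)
Your proof is correct and follows essentially the same route as the paper: Wedderburn's theorem, then the matrix arguments of Lemma \ref{lem:involutionMnD} to rule out $n\ge 3$ and to force $D=F$ when $n=2$, and Theorem \ref{thm:extendedfrobo} for the $n=1$ division-algebra case. Your only additions are the (worthwhile) observation that the lemma's proof uses only the degree bound rather than the standard involution itself, and the explicit isomorphism $M_2(F)\cong\bigl(\tfrac{1,1}{F}\bigr)$, which the paper instead dispatches with a closing remark and a citation.
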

\begin{proof}If deg$(A) =1$ then $A = F$. Suppose deg$(A) =2$.
By Wedderburn's theorem, $A\cong M_n(D)$ for a unique (up to isomorphism) division $F$-algebra $D$ and a unique $n\in \mathbb{Z}_{\ge 1}$. If $n=1$ then $A$ is a division $F$-algebra, of degree 2. Then by Theorem \ref{thm:extendedfrobo}, $A$ is is isomorphic to $F$, a quadratic field extension of $F$, or a quaternion algebra over $F$. For $n\not=1$ we apply Lemma \ref{lem:involutionMnD} to complete the proof.
\end{proof}Note that $M_2(F)$ is a quaternion algebra \cite{Voightquat}.

We include two remarks about degree.

\begin{rem}\label{prop:relativedegree} Let $A$ and $B$ be $F$-algebras of finite degree. Then {\rm deg}$(A\times B) = {\rm deg}(A)+{\rm deg}(B)$ if and only if there exist $x\in A$ and $y\in B$ such that  {\rm deg}$(x) = {\rm deg}(A)$, {\rm deg}$(y) = {\rm deg}(B)$, and the respective minimal polynomials of $x$ and $y$ are relatively prime.
\end{rem}
\begin{proof} Let $\mu_x$ denote the minimal polynomial of $x$ in $F[T]$.
Let $x\in A$ and $b\in B$ satisfy  {\rm deg}$(x) = {\rm deg}(A)$, {\rm deg}$(y) = {\rm deg}(B)$, and that $\mu_x$ and $\mu_y$ are relatively prime. Note that $\mu_x\mu_y$ has degree deg$(x) + {\rm deg}(y)$.  
If $(x, y)$ satisfies a polynomial $p(T)\in F[T]$, then $\mu_x, \mu_y\mid p(T)$. Then since $\mu_1, \mu_2$ are relatively prime, deg$(p)\ge {\rm deg}(x) + {\rm deg}(y)$, so deg$((x, y))\ge {\rm deg}(x) + {\rm deg}(y)$.  Since $(x, y)$ satisfies $\mu_x\mu_y$ in particular, we have deg$((x, y)) = {\rm deg}(x) + {\rm deg}(y)$. Thus deg$(A\times B)\ge {\rm deg}(A) + {\rm deg}(B)$. Let $(a, b)\in A\times B$. Since $(a, b)$ satisfies $\mu_a\mu_b$ and deg$(\mu_a\mu_b)\le {\rm deg}(A) + {\rm deg}(B)$, we have deg$(A\times B)\le {\rm deg}(A) + {\rm deg}(B)$.

Note that for all $a\in A$ and $b\in B$ we have deg$(a)\le {\rm deg}(A)$ and deg$(b)\le {\rm deg}(B)$, so that deg$(\mu_a\mu_b)\le {\rm deg}(A) + {\rm deg}(B)$, with equality if and only if deg$(a) = {\rm deg}(A)$ and deg$(b) = {\rm deg}(B)$. Suppose that {\rm deg}$(A\times B) = {\rm deg}(A)+{\rm deg}(B).$ Take $(a, b)\in A\times B$ such that the minimal polynomial $\mu_{a,b}$ of $(a, b)$ has degree ${\rm deg}(A)+{\rm deg}(B)$. Since $(a, b)$ satisfies $\mu_a\mu_b$ we have deg$(\mu_a\mu_b)\ge {\rm deg}(A)+{\rm deg}(B)$.
Therefore deg$(a) = {\rm deg}(A)$ and deg$(b) = {\rm deg}(B)$, and since deg$(\mu_a\mu_b) = {\rm deg}(a) + {\rm deg}(b)$ we see that $\mu_a$ and $\mu_b$ are relatively prime.
 \end{proof}
 \begin{rem}
For two $F$-algebras of finite degree $A$ and $B$ it is not always true that  {\rm deg}$(A\times B) = {\rm deg}(A)+{\rm deg}(B)$. For example, for $k\ge2$, consider the boolean ring $\mathbb{F}_2^k$. For all $x\in \mathbb{F}_2^k$ we have $x^2 + x = 0$, so deg$(\mathbb{F}_2^k) = 2$, not $2k$.
 In Proposition \ref{prop:relativedegree} the hypothesis that the respective minimal polynomials of $x$ and $y$ are relatively prime avoids such exceptions.
 \end{rem}

\begin{prop}\label{prop:semisimple} Let $A$ be a nonsimple semisimple $F$-algebra of degree $2$, with {\rm char}($F)\not=2$.  Then $A$ is isomorphic to $F\times F$.
\end{prop}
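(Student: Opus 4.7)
The plan is to decompose $A$ via semisimplicity and then rule out possibilities using the degree hypothesis together with Theorem \ref{thm:simple}. Since $A$ is semisimple but not simple, we can write $A \cong A_1 \times A_2 \times \cdots \times A_k$ as a finite direct product of simple $F$-algebras with $k \geq 2$. For each $i$, the projection $\pi_i : A \to A_i$ is an $F$-algebra homomorphism; so if any $\tilde{x} \in A$ satisfies a monic $p(T) \in F[T]$ of degree at most $2$, then $\pi_i(\tilde{x})$ satisfies the same $p$ in $A_i$. In particular, $\deg(A_i) \leq 2$, so by Theorem \ref{thm:simple} each $A_i$ is isomorphic to $F$, a quadratic field extension of $F$, or a quaternion $F$-algebra.

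Next I would show that $k = 2$. Suppose, for contradiction, that $k \geq 3$. Since $\operatorname{char}(F) \neq 2$ we have $|F| \geq 3$, so we may choose three distinct elements $a_1, a_2, a_3 \in F$. Then consider
\[
x = (a_1, a_2, a_3, 0, \ldots, 0) \in A.
\]
Its minimal polynomial is the least common multiple of the minimal polynomials of its components, hence is divisible by the pairwise coprime factors $T - a_1, T - a_2, T - a_3$. Therefore $\deg(x) \geq 3$, contradicting $\deg(A) = 2$.

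With $k = 2$, write $A = A_1 \times A_2$. If some factor, say $A_1$, is not $F$, then $A_1$ is a quadratic field extension of $F$ or a quaternion $F$-algebra; in either case there exists $\alpha \in A_1 \setminus F$ whose minimal polynomial $\mu_\alpha \in F[T]$ is an irreducible monic quadratic. The element $(\alpha, 0) \in A$ then has minimal polynomial $\operatorname{lcm}(\mu_\alpha, T) = T\mu_\alpha$ (since $\mu_\alpha$ irreducible of degree $2$ is not divisible by $T$), so $\deg((\alpha, 0)) = 3$, contradicting $\deg(A) = 2$. Alternatively, this last step follows directly from Remark \ref{prop:relativedegree} applied to the coprime pair $(\mu_\alpha, T)$. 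Hence $A_1 = A_2 = F$, and $A \cong F \times F$.

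The main obstacle is modest and consists in carefully tracking the minimal polynomial of an element of a direct product, in particular ensuring coprimality of the relevant factors so that degrees add. Once this bookkeeping is done, everything else reduces to Theorem \ref{thm:simple} and the fact that any nontrivial factor forces the existence of an element of degree $3$.
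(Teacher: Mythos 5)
Your overall strategy---decompose $A$ into simple factors, bound each factor's degree via the projections, invoke Theorem \ref{thm:simple}, and then exhibit a degree-$3$ element whenever the decomposition is not $F\times F$---is sound, and your $k\ge 3$ step (three distinct scalars in three coordinates, available because $\mathrm{char}(F)\ne 2$ forces $|F|\ge 3$) is correct; it is essentially the paper's $(0,-1,1,\ldots,1)$ trick in more general clothing. The gap is in the $k=2$ step: you assert that every quaternion $F$-algebra contains an element outside $F$ whose minimal polynomial is an \emph{irreducible} monic quadratic. That assertion is equivalent to the algebra containing a quadratic subfield, and it fails for split quaternion algebras over quadratically closed fields: $M_2(\mathbb{C})$ is a quaternion $\mathbb{C}$-algebra (the paper notes after Theorem \ref{thm:simple} that $M_2(F)$ is a quaternion algebra), yet every matrix in $M_2(\mathbb{C})$ has an eigenvalue in $\mathbb{C}$, so every quadratic minimal polynomial there is reducible. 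Your argument therefore does not cover the case $A_1\cong M_2(F)$ with $F$ quadratically closed; the claim is fine for quadratic field extensions and for division quaternion algebras, but not in general.

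The repair is easy, because irreducibility was never what you needed---only that $\mu_\alpha$ be coprime to $T$. Choose a \emph{unit} $\alpha\in A_1\setminus F$: in a quadratic field extension or a division quaternion algebra any $\alpha\notin F$ will do, and in $M_2(F)$ one may take $\alpha = [\begin{smallmatrix}1&0\\0&-1\end{smallmatrix}]$, which is non-scalar since $\mathrm{char}(F)\ne 2$. Its minimal polynomial is a monic quadratic $T^2-tT+n$ with $n\ne 0$, for otherwise $\alpha^2=t\alpha$ and multiplying by $\alpha^{-1}$ gives $\alpha=t\in F$. Hence $\gcd(\mu_\alpha,T)=1$, and $(\alpha,0)$ has minimal polynomial $T\mu_\alpha$ of degree $3$, the contradiction you wanted. (For comparison, the paper sidesteps this issue by pairing $i\in D_j\setminus F$ with $1$'s rather than $0$'s in the other coordinates and deriving $i^2=1$, hence $(i-1)(i+1)=0$ in a division ring; your lcm computation is cleaner once the witness is chosen correctly.)
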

\begin{proof} 
An example of such an algebra is $F\times F$ which has standard involution $(x,y)\mapsto (y, x) = \overline{(x, y)}$ (Example \ref{ex:FcrossF}). So $F\times F$ has degree 2.

By Wedderburn's theorem, $A\cong M_{n_1}(D_1)\times M_{n_2}(D_2)\times \dots \times M_{n_k}(D_k)$ where $D_1, D_2, \ldots, D_k$ are division $F$-algebras  uniquely determined (up to isomorphism) by $F$, as are the integers $n_1, n_2, \ldots, n_k$.

Let $x = (a_1, a_2, \ldots, a_k)\in A$. Since deg$(A) = 2$, there exist $t, n\in F$ such that $x^2 - tx + n = 0$. In particular, $a_i^2 - ta_i + n = 0$ for $1\le i\le k$. Since $x$ is arbitrary it follows that deg$\left( M_{n_i}(D_i) \right)\le 2$ for $1\le i\le k$. Then by Theorem \ref{thm:simple}, $M_{n_i}(D_i)$ is isomorphic to $F$, or a quadratic field extension of $F$, or a quaternion algebra over $F$.  

Next we show that $M_{n_j}(D_j) = F$ for $1\le j\le k$.
Let $i\in D_j\setminus F$. Since $2\in F^\times$ we can (up to isomorphism) take $i^2 = a\in F^\times$ by completing the square. Consider the matrix $i\cdot 1\in M_{n_j}(D_j)$.  Let $v = (1, 1, \ldots, 1, i\cdot 1, 1, \ldots,  1)\in A$.  Let $t, n\in F$ satisfy $v^2 - tv + n = 0$. Then (by considering the matrix entries)  since $k>1$ we have $1 - t + n = 0$ and $a - ti + n = 0$ in $D_j$, and so $t = 0$ and $n= -1$. Therefore $a-1 = i^2 - 1 = 0$, from which we see that $(i-1)(i+1) = 0$. Since $D_j$ is a division algebra, $i = \pm 1$, a contradiction to $i\in D_j\setminus F$. Therefore $D_j = F$.

Finally we show that $n\le 2$.  Let $v = (0,-1, 1, 1,  \ldots, 1)\in A$. Let $t, n\in F$ satisfy $v^2 - tv + n = 0$. From the first coordinate we obtain $n = 0$. From the second we get $1+t = 0$, and from the third coordinate we see that $1-t = 0$, a contradiction. Therefore $n\le 2$.
\end{proof}

\section{Future work}

We would like to extend our investigation to algebras of high (greater or equal to 4) rank. Much work has been done in in rank 4 because of the importance of quadratic algebras. However, little is known about algebras of rank 5 or greater. In rank 5 we may be able to write down a multiplication table, (as we did in rank 3), which we could then use to find isomorphism classes. It may be that the results for ranks 5 and lower suggest a general result for finite-rank algebras.

\appendix%%%%%%%%%%%%%%%%%%%%%%%%%%%%%

\section{Appendix}
Here we include the computations showing associativity for Theorem \ref{thm:associative}.
By Lemma $\ref{lem:cases}$, we may consider case (C), in which $m=n=0$, and case (E), in which $c=y=0$.
In case (C) we have the following multiplication table for $A$
\begin{equation*}\label{eqn:commutative}
\begin{array}{ccl}
i^2 & = & -cz+bi + cj\\
ij & = & cy\\
ji& = & cy\\
j^2&= &-by + yi + zj
\end{array}
\end{equation*}
where $b, c, y, z$ are arbitrary elements of $R$.

Let $x_1 = p_1 + q_1i + r_1j$, $x_2 = p_2 + q_2i + r_2j$, and $x_3 = p_3 + q_3i + r_3j$ be three elements of $M$.
First we compute $(x_1x_2)x_3$:
\begin{align*}
[(p_1 + q_1i + r_1j)(p_2 + q_2i + r_2j)](p_3 + q_3i + r_3j) \ \ \ \ \ \ \ \ \ \ \ \ \ \ \ \ \ \ \ \ \ \ \ \ \ \ \ \ \ \ \ \ \ \ \ 
\end{align*}
\vspace{-14 mm}
\begin{align*}
&= \left( \begin{array}{l}p_1p_2 + p_1q_2i +p_1r_2j \\  
					+ q_1p_2i + q_1q_2i^2 + q_1r_2ij\\
					+ r_1p_2j + r_1q_2ji + r_1r_2j^2\\
\end{array} \right) (p_3 + q_3i + r_3j)\\
&=  \left( \begin{array}{l}p_1p_2 + (p_1q_2 + q_1p_2)i + (p_1r_2 + r_1p_2)j \\
		+ q_1q_2(-cz + bi + cj)\\
		+ q_1r_2(cy)\\
		+ r_1q_2(cy)\\
		+ r_1r_2(-by + yi + zj )
		\end{array}\right)(p_3 + q_3i + r_3j)\\
\end{align*}
\begin{align*}
&= \left(\begin{array}{l}p_1p_2 - q_1q_2cz+ q_1r_2cy + r_1q_2cy - r_1r_2by\\
	+(p_1q_2 + q_1p_2+q_1q_2b+r_1r_2y)i\\
	+(p_1r_2 + r_1p_2+ q_1q_2c+r_1r_2z)j\end{array}\right)(p_3 + q_3i + r_3j)\\
&= \left(\begin{array}{l}
	p_1p_2p_3 - q_1q_2p_3cz+ q_1r_2p_3cy + r_1q_2p_3cy - r_1r_2p_3by\\
	+(p_1q_2p_3 + q_1p_2p_3+q_1q_2p_3b+r_1r_2p_3y)i\\
	+(p_1r_2p_3 + r_1p_2p_3+ q_1q_2p_3c+r_1r_2p_3z)j\end{array}\right)\\
	&\hspace{4 mm}+ \left(\begin{array}{l}
	(p_1p_2q_3 - q_1q_2q_3cz+ q_1r_2q_3cy + r_1q_2q_3cy - r_1r_2q_3by)i\\
	+(p_1q_2q_3 + q_1p_2q_3+q_1q_2q_3b+r_1r_2q_3y)i^2\\
	+(p_1r_2q_3 + r_1p_2q_3+ q_1q_2q_3c+r_1r_2q_3z)ji\end{array}\right)\\
	&\hspace{4 mm}+\left(\begin{array}{l}
	(p_1p_2r_3 - q_1q_2r_3cz+ q_1r_2r_3cy + r_1q_2r_3cy - r_1r_2r_3by)j\\
	+(p_1q_2r_3 + q_1p_2r_3+q_1q_2r_3b+r_1r_2r_3y)ij\\
	+(p_1r_2r_3 + r_1p_2r_3+ q_1q_2r_3c+r_1r_2r_3z)j^2\end{array}\right)\\
&= p_1p_2p_3 - q_1q_2p_3cz+ q_1r_2p_3cy + r_1q_2p_3cy - r_1r_2p_3by\\
	&\hspace{4 mm}+\left(\begin{array}{l}
	p_1q_2p_3 + q_1p_2p_3+q_1q_2p_3b+r_1r_2p_3y\\
	+p_1p_2q_3 - q_1q_2q_3cz+ q_1r_2q_3cy + r_1q_2q_3cy - r_1r_2q_3by
	\end{array}\right)i\\
	&\hspace{4 mm}+\left(\begin{array}{l}
	p_1r_2p_3 + r_1p_2p_3+ q_1q_2p_3c+r_1r_2p_3z\\
	+p_1p_2r_3 - q_1q_2r_3cz+ q_1r_2r_3cy + r_1q_2r_3cy - r_1r_2r_3by
	\end{array}\right)j\\
	&\hspace{4 mm}+(p_1q_2q_3 + q_1p_2q_3+q_1q_2q_3b+r_1r_2q_3y)(-cz+bi+cj)\\
	&\hspace{4 mm}+(p_1r_2q_3 + r_1p_2q_3+ q_1q_2q_3c+r_1r_2q_3z)(cy)\\
	&\hspace{4 mm}+(p_1q_2r_3 + q_1p_2r_3+q_1q_2r_3b+r_1r_2r_3y)(cy)\\
	&\hspace{4 mm}+(p_1r_2r_3 + r_1p_2r_3+ q_1q_2r_3c+r_1r_2r_3z)(-by+yi+zj)\\
\end{align*}
\begin{align*}
&= \left(\begin{array}{l}
	p_1p_2p_3 - q_1q_2p_3cz+ q_1r_2p_3cy + r_1q_2p_3cy - r_1r_2p_3by\\
	-p_1q_2q_3cz - q_1p_2q_3cz-q_1q_2q_3bcz-r_1r_2q_3cyz\\
	+p_1r_2q_3cy + r_1p_2q_3cy+ q_1q_2q_3c^2y+r_1r_2q_3cyz\\
	+p_1q_2r_3cy + q_1p_2r_3cy+q_1q_2r_3bcy+r_1r_2r_3cy^2\\
	-p_1r_2r_3by - r_1p_2r_3by- q_1q_2r_3bcy-r_1r_2r_3byz
	\end{array}\right)\\
	&\hspace{4 mm}+ \left(\begin{array}{l}
	p_1q_2p_3 + q_1p_2p_3+q_1q_2p_3b+r_1r_2p_3y\\
	+p_1p_2q_3 - q_1q_2q_3cz+ q_1r_2q_3cy + r_1q_2q_3cy - r_1r_2q_3by\\
	p_1q_2q_3b + q_1p_2q_3b+q_1q_2q_3b^2+r_1r_2q_3by\\
	p_1r_2r_3y + r_1p_2r_3y+ q_1q_2r_3cy+r_1r_2r_3yz
	\end{array}\right)i\\
	&\hspace{4 mm}+ \left(\begin{array}{l}
	p_1r_2p_3 + r_1p_2p_3+ q_1q_2p_3c+r_1r_2p_3z\\
	+p_1p_2r_3 - q_1q_2r_3cz+ q_1r_2r_3cy + r_1q_2r_3cy - r_1r_2r_3by\\
	p_1q_2q_3c + q_1p_2q_3c+q_1q_2q_3bc+r_1r_2q_3cy\\
	p_1r_2r_3z + r_1p_2r_3z+ q_1q_2r_3cz+r_1r_2r_3z^2
	\end{array}\right)j\\
\end{align*}
\begin{align*}
&= \left(\begin{array}{l}
	p_1p_2p_3 - q_1q_2p_3cz+ q_1r_2p_3cy + r_1q_2p_3cy - r_1r_2p_3by\\
	-p_1q_2q_3cz - q_1p_2q_3cz-q_1q_2q_3bcz\\
	+p_1r_2q_3cy + r_1p_2q_3cy+ q_1q_2q_3c^2y\\
	+p_1q_2r_3cy + q_1p_2r_3cy+r_1r_2r_3cy^2\\
	-p_1r_2r_3by - r_1p_2r_3by-r_1r_2r_3byz
	\end{array}\right)\\
	&\hspace{4 mm}+ \left(\begin{array}{l}
	p_1q_2p_3 + q_1p_2p_3+q_1q_2p_3b+r_1r_2p_3y\\
	+p_1p_2q_3 - q_1q_2q_3cz+ q_1r_2q_3cy + r_1q_2q_3cy \\
	p_1q_2q_3b + q_1p_2q_3b+q_1q_2q_3b^2\\
	p_1r_2r_3y + r_1p_2r_3y+ q_1q_2r_3cy+r_1r_2r_3yz
	\end{array}\right)i\\
	&\hspace{4 mm}+ \left(\begin{array}{l}
	p_1r_2p_3 + r_1p_2p_3+ q_1q_2p_3c+r_1r_2p_3z\\
	+p_1p_2r_3+ q_1r_2r_3cy + r_1q_2r_3cy - r_1r_2r_3by\\
	p_1q_2q_3c + q_1p_2q_3c+q_1q_2q_3bc+r_1r_2q_3cy\\
	p_1r_2r_3z + r_1p_2r_3z+r_1r_2r_3z^2
	\end{array}\right)j
\end{align*} 

Next we compute $x_1(x_2x_3)$:
\begin{align*}
(p_1 + q_1i + r_1j)[(p_2 + q_2i + r_2j)(p_3 + q_3i + r_3j)] \ \ \ \ \ \ \ \ \ \ \ \ \ \ \ \ \ \ \ \ \ \ \ \ \ \ \ \ \ \ \ \ \ \ \ 
\end{align*}
\vspace{-16 mm}
\begin{align*}
&=  (p_1 + q_1i + r_1j)\left( \begin{array}{l}p_2p_3 + p_2q_3i +p_2r_3j \\  
					+ q_2p_3i + q_2q_3i^2 + q_2r_3ij\\
					+ r_2p_3j + r_2q_3ji + r_2r_3j^2\\
\end{array} \right)\\
\end{align*}
\begin{align*}
&= (p_1 + q_1i + r_1j)\left( \begin{array}{l}
	p_2p_3 + (p_2q_3 + q_2p_3)i + (p_2r_3+r_2p_3)j\\
	+ q_2q_3(-cz+bi+cj)\\
	+q_2r_3(cy)\\
	+r_2q_3(cy)\\
	+r_2r_3(-by+yi+zj)
\end{array} \right)\\
&= p_1 \left( \begin{array}{l}
	p_2p_3  - q_2q_3cz + q_2r_3cy + r_2q_3cy - r_2r_3by\\
	+(p_2q_3 + q_2p_3+q_2q_3b+r_2r_3y)i\\
	+(p_2r_3+r_2p_3+q_2q_3c+r_2r_3z)j
	\end{array}\right)\\
	&\hspace{4 mm}+q_1 i\left( \begin{array}{l}
	p_2p_3  - q_2q_3cz + q_2r_3cy + r_2q_3cy - r_2r_3by\\
	+(p_2q_3 + q_2p_3+q_2q_3b+r_2r_3y)i\\
	+(p_2r_3+r_2p_3+q_2q_3c+r_2r_3z)j
	\end{array}\right)\\
	&\hspace{4 mm}+r_1j\left( \begin{array}{l}
	p_2p_3  - q_2q_3cz + q_2r_3cy + r_2q_3cy - r_2r_3by\\
	+(p_2q_3 + q_2p_3+q_2q_3b+r_2r_3y)i\\
	+(p_2r_3+r_2p_3+q_2q_3c+r_2r_3z)j
	\end{array}\right)\\
&= \left( \begin{array}{l}
	p_1p_2p_3  - p_1q_2q_3cz + p_1q_2r_3cy + p_1r_2q_3cy - p_1r_2r_3by\\
	+(p_1p_2q_3 + p_1q_2p_3+p_1q_2q_3b+p_1r_2r_3y)i\\
	+(p_1p_2r_3+p_1r_2p_3+p_1q_2q_3c+p_1r_2r_3z)j
	\end{array}\right)\\
	&\hspace{4 mm}+\left( \begin{array}{l}
	(q_1p_2p_3  - q_1q_2q_3cz + q_1q_2r_3cy + q_1r_2q_3cy - q_1r_2r_3by)i\\
	+(q_1p_2q_3 + q_1q_2p_3+q_1q_2q_3b+q_1r_2r_3y)i^2\\
	+(q_1p_2r_3+q_1r_2p_3+q_1q_2q_3c+q_1r_2r_3z)ij
	\end{array}\right)\\
	&\hspace{4 mm}+\left( \begin{array}{l}
	(r_1p_2p_3  - r_1q_2q_3cz + r_1q_2r_3cy + r_1r_2q_3cy - r_1r_2r_3by)j\\
	+(r_1p_2q_3 + r_1q_2p_3+r_1q_2q_3b+r_1r_2r_3y)ji\\
	+(r_1p_2r_3+r_1r_2p_3+r_1q_2q_3c+r_1r_2r_3z)j^2
	\end{array}\right)\\
\end{align*}
\begin{align*}
&= 	p_1p_2p_3  - p_1q_2q_3cz + p_1q_2r_3cy + p_1r_2q_3cy - p_1r_2r_3by\\
	&\hspace{4 mm}+ \left( \begin{array}{l}
	p_1p_2q_3 + p_1q_2p_3+p_1q_2q_3b+p_1r_2r_3y\\
	+q_1p_2p_3  - q_1q_2q_3cz + q_1q_2r_3cy + q_1r_2q_3cy - q_1r_2r_3by
	\end{array}\right)i\\
	&\hspace{4 mm}+ \left( \begin{array}{l}
	p_1p_2r_3+p_1r_2p_3+p_1q_2q_3c+p_1r_2r_3z\\
	+r_1p_2p_3  - r_1q_2q_3cz + r_1q_2r_3cy + r_1r_2q_3cy - r_1r_2r_3by
	\end{array}\right)j\\
	&\hspace{4 mm}+(q_1p_2q_3 + q_1q_2p_3+q_1q_2q_3b+q_1r_2r_3y)(-cz+bi+cj)\\
	&\hspace{4 mm}+(q_1p_2r_3+q_1r_2p_3+q_1q_2q_3c+q_1r_2r_3z)(cy)\\
	&\hspace{4 mm}+(r_1p_2q_3 + r_1q_2p_3+r_1q_2q_3b+r_1r_2r_3y)(cy)\\
	&\hspace{4 mm}+(r_1p_2r_3+r_1r_2p_3+r_1q_2q_3c+r_1r_2r_3z)(-by+yi+zj)\\
&= \left(\begin{array}{l}  
	p_1p_2p_3  - p_1q_2q_3cz + p_1q_2r_3cy + p_1r_2q_3cy - p_1r_2r_3by\\
	-q_1p_2q_3cz - q_1q_2p_3cz - q_1q_2q_3bcz - q_1r_2r_3cyz\\
	+q_1p_2r_3cy+q_1r_2p_3cy+q_1q_2q_3c^2y+q_1r_2r_3cyz\\
	+r_1p_2q_3cy + r_1q_2p_3cy+r_1q_2q_3bcy+r_1r_2r_3cy^2\\
	-r_1p_2r_3by-r_1r_2p_3by-r_1q_2q_3bcy-r_1r_2r_3byz
	\end{array}\right)\\
	&\hspace{4 mm}+\left( \begin{array}{l}
	p_1p_2q_3 + p_1q_2p_3+p_1q_2q_3b+p_1r_2r_3y\\
	+q_1p_2p_3  - q_1q_2q_3cz + q_1q_2r_3cy + q_1r_2q_3cy - q_1r_2r_3by\\
	+q_1p_2q_3b + q_1q_2p_3b+q_1q_2q_3b^2+q_1r_2r_3by\\
	+r_1p_2r_3y+r_1r_2p_3y+r_1q_2q_3cy+r_1r_2r_3yz
	\end{array}\right)i\\
	&\hspace{4 mm}+\left( \begin{array}{l}
	p_1p_2r_3+p_1r_2p_3+p_1q_2q_3c+p_1r_2r_3z\\
	+r_1p_2p_3  - r_1q_2q_3cz + r_1q_2r_3cy + r_1r_2q_3cy - r_1r_2r_3by\\
	+q_1p_2q_3c + q_1q_2p_3c+q_1q_2q_3bc+q_1r_2r_3cy\\
	+r_1p_2r_3z+r_1r_2p_3z+r_1q_2q_3cz+r_1r_2r_3z^2
	\end{array}\right)j\\
\end{align*}
\begin{align*}
&= \left(\begin{array}{l}  
	p_1p_2p_3  - p_1q_2q_3cz + p_1q_2r_3cy + p_1r_2q_3cy - p_1r_2r_3by\\
	-q_1p_2q_3cz - q_1q_2p_3cz - q_1q_2q_3bcz \\
	+q_1p_2r_3cy+q_1r_2p_3cy+q_1q_2q_3c^2y\\
	+r_1p_2q_3cy + r_1q_2p_3cy+r_1r_2r_3cy^2\\
	-r_1p_2r_3by-r_1r_2p_3by-r_1r_2r_3byz
	\end{array}\right)\\
	&\hspace{4 mm}+\left( \begin{array}{l}
	p_1p_2q_3 + p_1q_2p_3+p_1q_2q_3b+p_1r_2r_3y\\
	+q_1p_2p_3  - q_1q_2q_3cz + q_1q_2r_3cy + q_1r_2q_3cy \\
	+q_1p_2q_3b + q_1q_2p_3b+q_1q_2q_3b^2\\
	+r_1p_2r_3y+r_1r_2p_3y+r_1q_2q_3cy+r_1r_2r_3yz
	\end{array}\right)i\\
	&\hspace{4 mm}+\left( \begin{array}{l}
	p_1p_2r_3+p_1r_2p_3+p_1q_2q_3c+p_1r_2r_3z\\
	+r_1p_2p_3  + r_1q_2r_3cy + r_1r_2q_3cy - r_1r_2r_3by\\
	+q_1p_2q_3c + q_1q_2p_3c+q_1q_2q_3bc+q_1r_2r_3cy\\
	+r_1p_2r_3z+r_1r_2p_3z+r_1r_2r_3z^2
	\end{array}\right)j
\end{align*}

Observe that in case (C), $(x_1x_2)x_3 = (x_1x_2)x_3$.  Next consider case (E), in which the multiplication table for $A$ is
\begin{equation*}\label{eqn:exceptional}
\begin{array}{ccl}
i^2 & = & ni\\
ij&=&0\\
ji&=&-mn + mi + nj\\
j^2&=&mj
\end{array}
\end{equation*}
where $m, n$ are arbitrary elements of $R$ (with $m, n$ not both 0).\\

We compute $(x_1x_2)x_3$:
\begin{align*}
[(p_1 + q_1i + r_1j)(p_2 + q_2i + r_2j)](p_3 + q_3i + r_3j) \ \ \ \ \ \ \ \ \ \ \ \ \ \ \ \ \ \ \ \ \ \ \ \ \ \ \ \ \ \ \ \ \ \ \ 
\end{align*}
\vspace{-16 mm}
\begin{align*}
&= \left( \begin{array}{l}p_1p_2 + p_1q_2i +p_1r_2j \\  
					+ q_1p_2i + q_1q_2i^2 + q_1r_2ij\\
					+ r_1p_2j + r_1q_2ji + r_1r_2j^2\\
	\end{array} \right) (p_3 + q_3i + r_3j)\\
&= \left( \begin{array}{l}p_1p_2 + (p_1q_2+q_1p_2)i +(p_1r_2+r_1p_2)j \\ 
	+q_1q_2(ni)\\
	+q_1r_2(0)\\
	r_1q_2(-mn + mi + nj)\\
	r_1r_2(mj)
	\end{array} \right) (p_3 + q_3i + r_3j)\\
&= \left( \begin{array}{l}
	p_1p_2 - r_1q_2mn\\
	+(p_1q_2+q_1p_2 + q_1q_2n+ r_1q_2m)i\\
	+(p_1r_2+r_1p_2+r_1q_2n+r_1r_2m)j
	\end{array} \right)p_3\\
	&\hspace{4 mm}+  \left( \begin{array}{l}p_1p_2 - r_1q_2mn\\
	+(p_1q_2+q_1p_2 + q_1q_2n+ r_1q_2m)i\\
	+(p_1r_2+r_1p_2+r_1q_2n+r_1r_2m)j
	\end{array} \right)q_3i\\
	&\hspace{4 mm}+  \left( \begin{array}{l}p_1p_2 - r_1q_2mn\\
	+(p_1q_2+q_1p_2 + q_1q_2n+ r_1q_2m)i\\
	+(p_1r_2+r_1p_2+r_1q_2n+r_1r_2m)j
	\end{array} \right)r_3j\\
\end{align*}
\begin{align*}
&= \left( \begin{array}{l}
	p_1p_2p_3 - r_1q_2p_3mn\\
	+(p_1q_2p_3+q_1p_2p_3 + q_1q_2p_3n+ r_1q_2p_3m)i\\
	+(p_1r_2p_3+r_1p_2p_3+r_1q_2p_3n+r_1r_2p_3m)j
	\end{array} \right)\\
	&\hspace{4 mm}+  \left( \begin{array}{l}(p_1p_2q_3 - r_1q_2q_3mn)i\\
	+(p_1q_2q_3+q_1p_2q_3 + q_1q_2q_3n+ r_1q_2q_3m)i^2\\
	+(p_1r_2q_3+r_1p_2q_3+r_1q_2q_3n+r_1r_2q_3m)ji
	\end{array} \right)\\
	&\hspace{4 mm}+  \left( \begin{array}{l}(p_1p_2r_3 - r_1q_2r_3mn)j\\
	+(p_1q_2r_3+q_1p_2r_3 + q_1q_2r_3n+ r_1q_2r_3m)ij\\
	+(p_1r_2r_3+r_1p_2r_3+r_1q_2r_3n+r_1r_2r_3m)j^2
	\end{array} \right)\\
&= 	p_1p_2p_3 - r_1q_2p_3mn\\
	&\hspace{4 mm}+\left(\begin{array}{l} 
	p_1q_2p_3+q_1p_2p_3 + q_1q_2p_3n+ r_1q_2p_3m\\
	+p_1p_2q_3 - r_1q_2q_3mn \end{array}\right)i\\
	&\hspace{4 mm}+\left(\begin{array}{l} 
	p_1r_2p_3+r_1p_2p_3+r_1q_2p_3n+r_1r_2p_3m\\
	+p_1p_2r_3 - r_1q_2r_3mn
	\end{array}\right)j\\
	&\hspace{4 mm}+(p_1q_2q_3+q_1p_2q_3 + q_1q_2q_3n+ r_1q_2q_3m)(ni)\\
	&\hspace{4 mm}+(p_1r_2q_3+r_1p_2q_3+r_1q_2q_3n+r_1r_2q_3m)(-mn+mi+nj)\\
	&\hspace{4 mm}+(p_1q_2r_3+q_1p_2r_3 + q_1q_2r_3n+ r_1q_2r_3m)(0)\\
	&\hspace{4 mm}+(p_1r_2r_3+r_1p_2r_3+r_1q_2r_3n+r_1r_2r_3m)(mj)\\
\end{align*}
\begin{align*}
&= \left( \begin{array}{l} 
	p_1p_2p_3 - r_1q_2p_3mn\\
	-p_1r_2q_3mn-r_1p_2q_3mn-r_1q_2q_3mn^2-r_1r_2q_3m^2n
	 \end{array}\right)\\
	 &\hspace{4 mm}+ \left( \begin{array}{l}
	 p_1q_2p_3+q_1p_2p_3 + q_1q_2p_3n+ r_1q_2p_3m\\
	+p_1p_2q_3 - r_1q_2q_3mn\\
	+p_1q_2q_3n+q_1p_2q_3n + q_1q_2q_3n^2+ r_1q_2q_3mn\\
	+p_1r_2q_3m+r_1p_2q_3m+r_1q_2q_3mn+r_1r_2q_3m^2\\
	 \end{array}\right)i\\
	 &\hspace{4 mm}+ \left( \begin{array}{l}
	p_1r_2p_3+r_1p_2p_3+r_1q_2p_3n+r_1r_2p_3m\\
	+p_1p_2r_3 - r_1q_2r_3mn\\
	+p_1r_2q_3n+r_1p_2q_3n+r_1q_2q_3n^2+r_1r_2q_3mn\\
	p_1r_2r_3m+r_1p_2r_3m+r_1q_2r_3mn+r_1r_2r_3m^2
	 \end{array}\right)j\\
&= \left( \begin{array}{l} 
	p_1p_2p_3 - r_1q_2p_3mn\\
	-p_1r_2q_3mn-r_1p_2q_3mn-r_1q_2q_3mn^2-r_1r_2q_3m^2n
	 \end{array}\right)\\
	 &\hspace{4 mm}+ \left( \begin{array}{l}
	 p_1q_2p_3+q_1p_2p_3 + q_1q_2p_3n+ r_1q_2p_3m\\
	+p_1p_2q_3 \\
	+p_1q_2q_3n+q_1p_2q_3n + q_1q_2q_3n^2\\
	+p_1r_2q_3m+r_1p_2q_3m+r_1q_2q_3mn+r_1r_2q_3m^2\\
	 \end{array}\right)i\\
	 &\hspace{4 mm}+ \left( \begin{array}{l}
	p_1r_2p_3+r_1p_2p_3+r_1q_2p_3n+r_1r_2p_3m\\
	+p_1p_2r_3\\
	+p_1r_2q_3n+r_1p_2q_3n+r_1q_2q_3n^2+r_1r_2q_3mn\\
	p_1r_2r_3m+r_1p_2r_3m+r_1r_2r_3m^2
	 \end{array}\right)j\\
\end{align*}
Next we compute $x_1(x_2x_3)$:
\begin{align*}
(p_1 + q_1i + r_1j)[(p_2 + q_2i + r_2j)(p_3 + q_3i + r_3j)] \ \ \ \ \ \ \ \ \ \ \ \ \ \ \ \ \ \ \ \ \ \ \ \ \ \ \ \ \ \ \ \ \ \ \ 
\end{align*}
\begin{align*}
&=  (p_1 + q_1i + r_1j)
	\left( \begin{array}{l}p_2p_3 + p_2q_3i +p_2r_3j \\  
	+ q_2p_3i + q_2q_3i^2 + q_2r_3ij\\
	+ r_2p_3j + r_2q_3ji + r_2r_3j^2\\
	\end{array} \right)\\
&= (p_1 + q_1i + r_1j)\left( \begin{array}{l}
	p_2p_3 + (p_2q_3+q_2p_3)i + (p_2r_3+r_2p_3)j\\
	+q_2q_3(ni)\\
	+q_2r_3(0)\\
	+r_2q_3(-mn+mi+nj)\\
	+r_2r_3(mj)
	\end{array}\right)\\
&= p_1 \left( \begin{array}{l}
	p_2p_3 - r_2q_3mn\\
	+(p_2q_3+q_2p_3+q_2q_3n+r_2q_3m)i\\
	+(p_2r_3+r_2p_3+r_2q_3n+ r_2r_3m)j
	\end{array}\right)\\
	&\hspace{4 mm}+ q_1i\left( \begin{array}{l}
	p_2p_3 - r_2q_3mn\\
	+(p_2q_3+q_2p_3+q_2q_3n+r_2q_3m)i\\
	+(p_2r_3+r_2p_3+r_2q_3n+ r_2r_3m)j	
	\end{array}\right)\\
	&\hspace{4 mm}+ r_1j\left( \begin{array}{l}
	p_2p_3 - r_2q_3mn\\
	+(p_2q_3+q_2p_3+q_2q_3n+r_2q_3m)i\\
	+(p_2r_3+r_2p_3+r_2q_3n+ r_2r_3m)j	
	\end{array}\right)\\
\end{align*}
\begin{align*}
&=  \left( \begin{array}{l}
	p_1p_2p_3 - p_1r_2q_3mn\\
	+(p_1p_2q_3+p_1q_2p_3+p_1q_2q_3n+p_1r_2q_3m)i\\
	+(p_1p_2r_3+p_1r_2p_3+p_1r_2q_3n+ p_1r_2r_3m)j
	\end{array}\right)\\
	&\hspace{4 mm}+ \left( \begin{array}{l}
	(q_1p_2p_3 - q_1r_2q_3mn)i\\
	+(q_1p_2q_3+q_1q_2p_3+q_1q_2q_3n+q_1r_2q_3m)i^2\\
	+(q_1p_2r_3+q_1r_2p_3+q_1r_2q_3n+ q_1r_2r_3m)ij	
	\end{array}\right)\\
	&\hspace{4 mm}+\left( \begin{array}{l}
	(r_1p_2p_3 - r_1r_2q_3mn)j\\
	+(r_1p_2q_3+r_1q_2p_3+r_1q_2q_3n+r_1r_2q_3m)ji\\
	+(r_1p_2r_3+r_1r_2p_3+r_1r_2q_3n+ r_1r_2r_3m)j^2	
	\end{array}\right)\\
&= 	p_1p_2p_3 - p_1r_2q_3mn\\
	&\hspace{4 mm}+\left( \begin{array}{l}
	p_1p_2q_3+p_1q_2p_3+p_1q_2q_3n+p_1r_2q_3m\\
	+q_1p_2p_3 - q_1r_2q_3mn
	\end{array}\right)i\\
	&\hspace{4 mm}+\left( \begin{array}{l}
	p_1p_2r_3+p_1r_2p_3+p_1r_2q_3n+ p_1r_2r_3m\\
	+r_1p_2p_3 - r_1r_2q_3mn
	\end{array}\right)j\\
	&\hspace{4 mm}+(q_1p_2q_3+q_1q_2p_3+q_1q_2q_3n+q_1r_2q_3m)(ni)\\
	&\hspace{4 mm}+(q_1p_2r_3+q_1r_2p_3+q_1r_2q_3n+ q_1r_2r_3m)(0)\\
	&\hspace{4 mm}+(r_1p_2q_3+r_1q_2p_3+r_1q_2q_3n+r_1r_2q_3m)(-mn + mi + nj)\\
	&\hspace{4 mm}+(r_1p_2r_3+r_1r_2p_3+r_1r_2q_3n+ r_1r_2r_3m)(mj)\\
\end{align*}
\begin{align*}
&= \left( \begin{array}{l}
	p_1p_2p_3 - p_1r_2q_3mn\\
	-r_1p_2q_3mn-r_1q_2p_3mn-r_1q_2q_3mn^2-r_1r_2q_3m^2n
	\end{array}\right)\\
	&\hspace{4 mm}+\left( \begin{array}{l} 
	p_1p_2q_3+p_1q_2p_3+p_1q_2q_3n+p_1r_2q_3m\\
	+q_1p_2p_3 - q_1r_2q_3mn\\
	+q_1p_2q_3n+q_1q_2p_3n+q_1q_2q_3n^2+q_1r_2q_3mn\\
	+r_1p_2q_3m+r_1q_2p_3m+r_1q_2q_3mn+r_1r_2q_3m^2
	\end{array}\right)i\\
	&\hspace{4 mm}+\left(  \begin{array}{l} 
	p_1p_2r_3+p_1r_2p_3+p_1r_2q_3n+ p_1r_2r_3m\\
	+r_1p_2p_3 - r_1r_2q_3mn\\
	+r_1p_2q_3n+r_1q_2p_3n+r_1q_2q_3n^2+r_1r_2q_3mn\\
	+r_1p_2r_3m+r_1r_2p_3m+r_1r_2q_3mn+ r_1r_2r_3m^2
	\end{array}\right)j\\
&= \left( \begin{array}{l}
	p_1p_2p_3 - p_1r_2q_3mn\\
	-r_1p_2q_3mn-r_1q_2p_3mn-r_1q_2q_3mn^2-r_1r_2q_3m^2n
	\end{array}\right)\\
	&\hspace{4 mm}+\left( \begin{array}{l} 
	p_1p_2q_3+p_1q_2p_3+p_1q_2q_3n+p_1r_2q_3m\\
	+q_1p_2p_3\\
	+q_1p_2q_3n+q_1q_2p_3n+q_1q_2q_3n^2\\
	+r_1p_2q_3m+r_1q_2p_3m+r_1q_2q_3mn+r_1r_2q_3m^2
	\end{array}\right)i\\
	&\hspace{4 mm}+\left(  \begin{array}{l} 
	p_1p_2r_3+p_1r_2p_3+p_1r_2q_3n+ p_1r_2r_3m\\
	+r_1p_2p_3 \\
	+r_1p_2q_3n+r_1q_2p_3n+r_1q_2q_3n^2\\
	+r_1p_2r_3m+r_1r_2p_3m+r_1r_2q_3mn+ r_1r_2r_3m^2
	\end{array}\right)j
\end{align*}
Observe that in case (E), $(x_1x_2)x_3 = x_1(x_2x_3)$. Since associativity holds in both cases, the multiplication on $M$ defined by table \ref{eqn:mtable} and relations \ref{eqn:relations}$.10$ is associative.

\end{document}